\newcommand{\bB}{{\mathbb{B}}}
\newcommand{\bC}{{\mathbb{C}}}
\newcommand{\bD}{{\mathbb{D}}}
\newcommand{\bF}{{\mathbb{F}}}
\newcommand{\bM}{{\mathbb{M}}}
\newcommand{\bN}{{\mathbb{N}}}
\newcommand{\bT}{{\mathbb{T}}}
  \newcommand{\A}{{\mathcal{A}}}
  \newcommand{\B}{{\mathcal{B}}}
  \newcommand{\D}{{\mathcal{D}}}
  \newcommand{\F}{{\mathcal{F}}}
\renewcommand{\H}{{\mathcal{H}}}
  \newcommand{\I}{{\mathcal{I}}}
  \newcommand{\J}{{\mathcal{J}}}
\renewcommand{\L}{{\mathcal{L}}}
  \newcommand{\M}{{\mathcal{M}}}
\renewcommand{\S}{{\mathcal{S}}}
  \newcommand{\T}{{\mathcal{T}}}
  \newcommand{\X}{{\mathcal{X}}}
\newcommand{\fA}{{\mathfrak{A}}}
\newcommand{\fB}{{\mathfrak{B}}}
\newcommand{\fF}{{\mathfrak{F}}}
\newcommand{\fH}{{\mathfrak{H}}}
\newcommand{\fK}{{\mathfrak{K}}}
\newcommand{\fL}{{\mathfrak{L}}}
\newcommand{\fM}{{\mathfrak{M}}}
\newcommand{\fN}{{\mathfrak{N}}}
\newcommand{\fT}{{\mathfrak{T}}}
\newcommand{\rC}{\mathrm{C}}
\newcommand{\eps}{\varepsilon}
\renewcommand{\phi}{\varphi}
\newcommand{\upchi}{{\raise.35ex\hbox{$\chi$}}}
\newcommand{\ol}{\overline}
\newcommand{\qand}{\quad\text{and}\quad}
\newcommand{\id}{\operatorname{id}}
\newcommand{\spn}{\operatorname{span}}
\newtheorem{lemma}{Lemma}[section]
\newtheorem{theorem}[lemma]{Theorem}
\newtheorem{proposition}[lemma]{Proposition}
\newtheorem{corollary}[lemma]{Corollary}
\theoremstyle{definition}
\newtheorem{example}{Example}
\begin{document}
\author{Rapha\"el Clou\^atre}

\address{Department of Mathematics, University of Manitoba, Winnipeg, Manitoba, Canada R3T 2N2}

\email{raphael.clouatre@umanitoba.ca\vspace{-2ex}}
\thanks{The first author was partially supported by an NSERC Discovery Grant}
\author{Christopher Ramsey}
\email{christopher.ramsey@umanitoba.ca\vspace{-2ex}}

\begin{abstract}
We study non-selfadjoint operator algebras that can be entirely understood via their finite-dimensional representations. In contrast with the elementary matricial description of finite-dimensional $\rC^*$-algebras, in the non-selfadjoint setting we show that an additional level of flexibility must be allowed. Motivated by this peculiarity, we consider a natural non-selfadjoint notion of residual finite-dimensionality. We identify sufficient conditions for the tensor algebra of a $\rC^*$-correspondence to enjoy this property. To clarify the connection with the usual self-adjoint notion, we investigate the residual finite-dimensionality of  the minimal and maximal $\rC^*$-covers associated to an operator algebra.

\end{abstract}

\title{Residually finite-dimensional operator algebras}
\date{\today}
\maketitle

\section{Introduction}\label{S:intro}

Finite-dimensional $\rC^*$-algebras are easily understood as direct sums of matrix algebras. In trying to understand arbitrary $\rC^*$-algebras, it is therefore natural to approximate them, whenever possible, with finite-dimensional ones. This general strategy has led to the introduction of various important properties of $\rC^*$-algebras, such as nuclearity and quasidiagonality (see \cite{BO2008} for a detailed account).  In view of the spectacular recent progress in the structure theory of $\rC^*$-algebras based on the idea of finite-dimensional approximations (see for instance \cite{TWW2017}), one may want to proceed along similar lines to clarify the structure of \emph{non-selfadjoint} operator algebras, and such is the motivation for this paper.

Perhaps the most basic finite-dimensional approximation property that a $\rC^*$-algebra can enjoy is that of residual finite-dimensionality. The class of residually finite-dimensional (RFD) $\rC^*$-algebras consists of those that can be embedded in a product of matrix algebras. In other words, RFD $\rC^*$-algebras admit block-diagonal decompositions with finite-dimensional blocks. This class contains the familiar commutative $\rC^*$-algebras, but also some more complicated objects; a classical example is the full 
$\rC^*$-algebra of the free group on two generators \cite{choi1980}. Furthermore, any $\rC^*$-algebra is a quotient of an RFD $\rC^*$-algebra \cite{GM1990}. Throughout the years, several characterizations of RFD $\rC^*$-algebras have emerged \cite{EL1992},\cite{archbold1995},\cite{hadwin2014},\cite{courtney2017}. Studying this property in the setting of non-selfadjoint operator algebras is the driving force of this paper. Similar investigations can be found scattered in the literature (see for instance \cite{mittal2010} and \cite{CM2017rfd}), but we adopt here a somehow more systematic approach. 

Already, the mere definition of what it should mean for a general operator algebra $\A$ to be RFD raises interesting questions. Assume for instance that $\A$ can be approximated, in some sense, by finite-dimensional operator algebras. It is not obvious at first glance whether the finite-dimensional approximating algebras can be taken to be comprised of matrices. The point here is that the structure of finite-dimensional operator algebras is not as transparent as that of their self-adjoint counterparts. Clarifying this issue is one our objectives. 

The second main goal of the paper is to relate and contrast residual finite dimensionality in the self-adjoint world with the corresponding property in the non-selfadjoint world. We approach this question by starting with an RFD operator algebra $\A$, and investigating whether the $\rC^*$-algebras that various copies of $\A$ generate (the so-called \emph{$\rC^*$-covers} of $\A$) are also RFD. In fact, we will focus on two particularly important $\rC^*$-covers: the maximal $\rC^*$-cover $\rC^*_{\max}(\A)$, and the minimal one $\rC^*_e(\A)$, which is typically called the $\rC^*$-envelope. We now describe the organization of the paper, and state our main results.

Section \ref{S:prelim} introduces some necessary background material.

In Section \ref{S:structure}, we perform a careful analysis of finite-dimensional operator algebras. As opposed to the self-adjoint setting, finite-dimensional non-selfadjoint operator algebras may not be classified up to completely isometric isomorphism using matrix algebras. Such a simple description is available if one is willing to settle for a classification up to completely bounded isomorphisms (Proposition \ref{P:fdimoacb} and Corollary \ref{C:fdoastructure}). 
The main results of the section (Theorems \ref{T:fdimRFDnorm} and \ref{T:RFDbimodule}) show that finite-dimensional operator algebras can be well-approximated by matrix algebras.

The information about finite-dimensional operator algebras obtained in Section \ref{S:structure} is leveraged in Section \ref{S:RFD}, where we turn to the study of residually finite-dimensional non-selfadjoint operator algebras. The main result of the section is the following (Theorem \ref{T:fdimvsfdim}), which shows that despite the lack of a completely isometric classification of finite-dimensional operator algebras using matrix algebras, the two classes can be used interchangeably in the definition of an RFD operator algebra. This is consistent with the self-adjoint setting.

\begin{theorem}\label{T:mainA}
Let $\A$ be an operator algebra. Consider the following statements.
\begin{enumerate}

\item[\rm{(i)}] There is a collection $\{\B_i\}_{i\in \Omega}$ of finite-dimensional operator algebras and a completely isometric homomorphism $\Phi:\A\to \prod_{i\in \Omega}\B_i$.

\item[\rm{(ii)}] There is a collection $\{\fB_i\}_{i\in \Omega}$ of finite-dimensional $\rC^*$-algebras and a completely isometric homomorphism $\Phi:\A\to \prod_{i\in \Omega}\fB_i$.

\item[\rm{(iii)}] For every $d\in\bN$ and every $A\in \bM_d(\A)$, there is a finite-dimensional operator algebra $\B$ and a completely contractive homomorphism  $\pi:\A\to\B$ such that $\|\pi^{(d)}(A)\|=\|A\|$.

\item[\rm{(iv)}] For every $d\in\bN$ and every $A\in \bM_d(\A)$, there is a finite-dimensional $\rC^*$-algebra $\fB$ and a completely contractive homomorphism  $\pi:\A\to\fB$ such that $\|\pi^{(d)}(A)\|=\|A\|$.
\end{enumerate}

Then, \emph{(i)} and \emph{(ii)} are equivalent, and \emph{(iii)} and \emph{(iv)} are equivalent.
\end{theorem}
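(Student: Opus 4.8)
The two implications (ii) $\Rightarrow$ (i) and (iv) $\Rightarrow$ (iii) are immediate, since every finite-dimensional $\rC^*$-algebra is in particular a finite-dimensional operator algebra. The content therefore lies in the converses, and in both cases the engine will be the approximation results of Section \ref{S:structure}: given a finite-dimensional operator algebra $\B$ and $\varepsilon>0$, Theorems \ref{T:fdimRFDnorm} and \ref{T:RFDbimodule} supply a finite-dimensional $\rC^*$-algebra $\fB$ together with a completely contractive homomorphism $\sigma:\B\to\fB$ whose norm distortion on matrices is at most $\varepsilon$; that is, $\|\sigma^{(d)}(B)\|\ge(1-\varepsilon)\|B\|$ for every $d$ and every $B\in\bM_d(\B)$, complete contractivity giving the reverse inequality automatically.

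For (i) $\Rightarrow$ (ii), I would start from the completely isometric homomorphism $\Phi:\A\to\prod_{i\in\Omega}\B_i$ and, for each $i$ and each $n\in\bN$, invoke the approximation result to produce a finite-dimensional $\rC^*$-algebra $\fB_{i,n}$ and a completely contractive homomorphism $\sigma_{i,n}:\B_i\to\fB_{i,n}$ with $\|\sigma_{i,n}^{(d)}(B)\|\ge(1-1/n)\|B\|$ on $\bM_d(\B_i)$. Composing with the coordinate projections and with $\Phi$ yields completely contractive homomorphisms $\psi_{i,n}:\A\to\fB_{i,n}$, which I would assemble into $\Psi=\prod_{(i,n)}\psi_{i,n}:\A\to\prod_{(i,n)}\fB_{i,n}$, a completely contractive homomorphism into a product of finite-dimensional $\rC^*$-algebras. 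For $A\in\bM_d(\A)$ one computes $\|\Psi^{(d)}(A)\|=\sup_{i,n}\|\sigma_{i,n}^{(d)}(B_i)\|$, where $B_i$ is the $i$-th coordinate of $\Phi^{(d)}(A)$. For fixed $i$, letting $n\to\infty$ gives $\sup_n\|\sigma_{i,n}^{(d)}(B_i)\|=\|B_i\|$, so the outer supremum equals $\sup_i\|B_i\|=\|\Phi^{(d)}(A)\|=\|A\|$, whence $\Psi$ is completely isometric, proving (ii).

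For (iii) $\Rightarrow$ (iv) the difficulty is that the approximation result only recovers norms approximately, whereas (iv) demands exact equality. I would first isolate this as a lemma: for any finite-dimensional operator algebra $\B$ and any $B\in\bM_d(\B)$ there exist a finite-dimensional $\rC^*$-algebra $\fB$ and a completely contractive homomorphism $\tau:\B\to\fB$ with $\|\tau^{(d)}(B)\|=\|B\|$ \emph{exactly}. To prove it, take the approximants $\sigma_n:\B\to\fB_n\subseteq B(H_n)$ with $\|\sigma_n^{(d)}(B)\|\ge(1-1/n)\|B\|$, choose unit vectors $\xi_n\in\bC^d\otimes H_n$ nearly norming $\sigma_n^{(d)}(B)$, and compress to the $\sigma_n(\B)$-invariant subspace $\mathcal{M}_n=\sigma_n(\B)V_n+V_n$, where $V_n\subseteq H_n$ is spanned by the $d$ components of $\xi_n$. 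Since $\dim\mathcal{M}_n\le(\dim\B+1)d=:N_0$ independently of $n$, the restrictions $\tau_n(b)=\sigma_n(b)|_{\mathcal{M}_n}$ are completely contractive homomorphisms which, after padding, land in the fixed matrix algebra $\bM_{N_0}$ and satisfy $\|\tau_n^{(d)}(B)\|\ge\|\sigma_n^{(d)}(B)\xi_n\|\ge(1-1/n)\|B\|$. The completely contractive homomorphisms $\B\to\bM_{N_0}$ form a closed and bounded, hence compact, subset of a finite-dimensional space, so a subsequential limit $\tau$ is again a completely contractive homomorphism, and continuity of $B\mapsto\|\tau^{(d)}(B)\|$ forces $\|\tau^{(d)}(B)\|=\|B\|$. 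Granting the lemma, (iii) $\Rightarrow$ (iv) is immediate: given $A\in\bM_d(\A)$, take $\pi:\A\to\B$ from (iii) with $\|\pi^{(d)}(A)\|=\|A\|$, apply the lemma to $B=\pi^{(d)}(A)$, and note that $\tau\circ\pi:\A\to\fB$ is completely contractive with $\|(\tau\circ\pi)^{(d)}(A)\|=\|A\|$.

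The main obstacle is exactly this passage from approximate to exact norm recovery: the invariant-subspace compression is what keeps the approximating $\rC^*$-algebras of uniformly bounded dimension and thereby makes the compactness argument available. The points I would verify with care are that the compression to $\mathcal{M}_n$ really is a homomorphism (using invariance) and completely contractive (as a restriction to an invariant subspace), and that the norming vectors can be arranged so that their components span a subspace of dimension at most $d$. Everything else is a routine assembly, with the genuine analytic weight borne by the Section \ref{S:structure} approximation theorems.
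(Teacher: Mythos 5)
Your argument is built on an ``engine'' that neither theorem of Section \ref{S:structure} provides, and which is in fact false. Theorem \ref{T:fdimRFDnorm} gives \emph{exact} norm attainment, $\|\pi^{(d)}(A)\|=\|A\|$, but only for one prescribed matrix $A$ at a time, with the target depending on $A$; Theorem \ref{T:RFDbimodule} gives a complete isometry into an \emph{infinite} product of matrix algebras. Neither yields what you assert: a single completely contractive homomorphism $\sigma:\B\to\fB$ into a finite-dimensional $\rC^*$-algebra with $\|\sigma^{(d)}(B)\|\geq(1-\eps)\|B\|$ for \emph{every} $d$ and every $B\in\bM_d(\B)$. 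No such $\sigma$ can exist in general. Indeed, a finite-dimensional $\rC^*$-algebra embeds $*$-isomorphically into a single $\bM_N$, so such a $\sigma$, restricted to the copy of a unital subspace $\S$ sitting in the upper-right corner of the algebra $\A_\S$ of Lemma \ref{L:M2C*env} (a corner that is completely isometric to $\S$), would exhibit $\S$ as $(1-\eps)^{-1}$-completely isomorphic to a subspace of a matrix algebra. Taking $\S$ to be the span of the identity and the universal unitary generators inside $\rC^*(\bF_n)$, the full group $\rC^*$-algebra of the free group (which is RFD), this contradicts Pisier's theorem on exact operator spaces: the exactness constant of that span grows like $\sqrt{n}$, so it is not $(1+\delta)$-completely isomorphic to a subspace of any matrix algebra for small $\delta$. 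This false claim is genuinely load-bearing in your proof of (i) $\Rightarrow$ (ii): for $\Psi$ to be completely isometric you need norm recovery at every matrix level, and your maps $\sigma_{i,n}$ are required to have distortion $1/n$ \emph{uniformly in} $d$. RFD-ness of $\B_i$ (an infinite product) does not truncate to this uniform finite statement --- that is precisely the content of non-exactness.

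The repair is to abandon the engine altogether. For (i) $\Rightarrow$ (ii), the paper simply applies Corollary \ref{C:fdimRFD} to each $\B_i$ to get a completely isometric homomorphism $\pi_i$ into a product of matrix algebras, and observes that $(\oplus_i\pi_i)\circ\Phi$ is completely isometric into the flattened product; alternatively, your construction can be saved by indexing the family by triples $(i,n,d)$ and demanding distortion $1/n$ only at level $d$, a level-by-level statement which is true. Your treatment of (iii) $\Rightarrow$ (iv) fares much better: the only input your lemma actually needs is per-element approximate recovery (a single $B$, a single $d$), which does follow from Corollary \ref{C:fdimRFD} by looking at coordinate projections of a completely isometric embedding; with the sourcing corrected, your bounded-dimension invariant-subspace compression plus compactness argument is valid, and it constitutes a genuinely different proof of Theorem \ref{T:fdimRFDnorm} than the paper's (which proceeds via a GNS representation of a norming state followed by compression to a finite-dimensional invariant subspace). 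Note, however, that this whole approximate-to-exact upgrade is unnecessary: Theorem \ref{T:fdimRFDnorm} as stated already asserts exact equality with target $B(\fF)$, a finite-dimensional $\rC^*$-algebra, so the paper's proof of (iii) $\Rightarrow$ (iv) is just the two-line composition $\sigma\circ\pi$ that you give at the end.
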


We pay close attention to a very important class of operator algebras, namely the tensor algebras of $\rC^*$-correspondences. We study them carefully through the lens of residual finite-dimensionality. For instance, we obtain the following (Theorems \ref{T:fdcorrespondence} and \ref{T:graph}).

\begin{theorem}\label{T:mainE}
The following statements hold.

\begin{enumerate}
\item[\rm{(1)}]
 Let $\fA$ be a finite-dimensional $\rC^*$-algebra and let $X$ be a $\rC^*$-corres\-pondence over $\fA$. Then, the tensor algebra $\T_{X}^+$ is RFD.

\item[\rm{(2)}] Let $G$ be a directed graph  and let $X_G$ be the associated graph correspondence. Then,  the tensor algebra $\T^+_{X_G}$ is RFD.
\end{enumerate}

\end{theorem}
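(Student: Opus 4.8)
The plan is to verify conditions (i)--(ii) of Theorem~\ref{T:mainA} by exhibiting, for each tensor algebra, a family of finite-dimensional completely contractive homomorphisms whose product is completely isometric, all obtained by compressing the Fock representation. Recall that a completely contractive homomorphism of $\T_X^+$ is the same datum as a contractive covariant representation $(\sigma,t)$ of $X$: a $*$-representation $\sigma$ of $\fA$ together with a bimodule map $t\colon X\to B(\H)$ for which the induced map $\widetilde t\colon X\otimes_\sigma\H\to\H$ is a contraction. The Fock representation on $\F(X)=\bigoplus_{n\ge 0}X^{\otimes n}$ realizes $\T_X^+$ completely isometrically, and for it $\widetilde t$ is an isometry. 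The engine of the argument is that compressing this representation to a suitable finite-dimensional subspace again produces a contractive covariant representation, because a compression of an isometry is a contraction and the left action preserves the grading.

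For part (1), I would first reduce to finite-dimensional correspondences. Any element of $\T_X^+$ is a norm-limit of polynomials in the generators $\{T_\xi\}_{\xi\in X}$ and $\{T_a\}_{a\in\fA}$, and such a polynomial $P$ involves only finitely many vectors $\xi_1,\dots,\xi_r\in X$. Since $\fA$ is finite-dimensional, the sub-bimodule $X_0$ generated by the $\xi_i$ is finite-dimensional, hence a finite-dimensional $\rC^*$-correspondence over $\fA$; moreover $X_0$ is orthogonally complemented in $X$ and its complement is again left-invariant, so that $X=X_0\oplus X_0^\perp$ as correspondences and $\F(X_0)$ is a reducing subspace for the operators $\{T_\xi,T_a:\xi\in X_0\}$. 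Consequently $\|P\|_{\T_X^+}=\|P\|_{B(\F(X_0))}$. Next, for each $N$ let $Q_N$ be the projection of $\F(X)$ onto the finite-dimensional space $\F_N(X_0)=\bigoplus_{n=0}^N X_0^{\otimes n}$, and set $\sigma_N(a)=Q_N T_a Q_N$ and $t_N(\xi)=Q_N T_\xi Q_N$ for $\xi\in X$. One checks that $(\sigma_N,t_N)$ is a contractive covariant representation, hence integrates to a completely contractive homomorphism $\pi_N\colon\T_X^+\to B(\F_N(X_0))$ with finite-dimensional image. Since creation operators raise the grading, on the polynomial $P$ the map $\pi_N$ agrees with the compression to $\F_N(X_0)$ of the honest $X_0$-Fock representation, and as $Q_N\to I$ strongly one gets $\|\pi_N^{(d)}(P)\|\to\|P\|_{B(\F(X_0))}=\|P\|_{\T_X^+}$, uniformly over matrix amplifications. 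Taking the product of all such $\pi_N$, indexed over the finite-dimensional sub-bimodules $X_0$ and the levels $N$, yields a completely isometric homomorphism into a product of matrix algebras, establishing (i)--(ii).

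For part (2), the base algebra $c_0(V)$ is no longer finite-dimensional when the vertex set is infinite, so I would first reduce to finite subgraphs. A polynomial $P$ in the generators of $\T_{X_G}^+$ involves only finitely many edges; letting $G_0$ be the finite subgraph they span together with the incident vertices, the closed span of paths lying inside $G_0$ is a reducing subspace of $\F(X_G)$ for the corresponding generators, exactly as above, so $\|P\|_{\T_{X_G}^+}=\|P\|_{\T_{X_{G_0}}^+}$; compressing the Fock representation to this subspace also provides a completely contractive homomorphism $\T_{X_G}^+\to\T_{X_{G_0}}^+$ carrying $P$ to its natural image. Since $G_0$ is finite, its vertex algebra $\mathbb{C}^{|V_0|}$ and its correspondence $X_{G_0}$ are finite-dimensional, so part (1) applies and furnishes finite-dimensional completely contractive homomorphisms of $\T_{X_{G_0}}^+$ recovering $\|P\|$. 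Composing, and taking the product over all finite subgraphs, establishes (i)--(ii) for $\T_{X_G}^+$.

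The step I expect to be the main obstacle is the verification that the compressed data $(\sigma_N,t_N)$ is genuinely a contractive covariant representation --- so that the universal property delivers an honest homomorphism rather than merely a completely contractive map --- together with the accompanying reducing-subspace computations that equate the norm of a polynomial in $\T_X^+$ with its norm in the finite-dimensional model. In particular, establishing that a finitely generated sub-bimodule over a finite-dimensional $\rC^*$-algebra is orthogonally complemented with left-invariant complement is what makes the reduction to finite-dimensional (and finite-subgraph) data possible, and it is exactly the place where finite-dimensionality of $\fA$, respectively finiteness of the subgraph, is essential.
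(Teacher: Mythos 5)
Your strategy is workable and, for part (1), takes a genuinely different route from the paper: the paper attains the norm of an element of $\bM_d(\T^0_X)$ in a GNS representation of the Toeplitz algebra, cuts down to a finite-dimensional subspace invariant for the finite-dimensional algebra $\tau(\rho_\infty(\fA))$ (Lemma \ref{L:fdiminvsub}), and integrates the compressed pair via \cite[Theorem 3.10]{MS1998}, whereas you truncate the Fock representation directly. Your part (2) follows the same outline as the paper's proof of Theorem \ref{T:graph} (reduce to finite subgraphs, invoke part (1), take a product over all finite subgraphs). The verifications you flag as the likely obstacle are in fact fine: the truncated pair $(\sigma_N,t_N)$ is a contractive covariant representation, intermediate truncations of a product of creation operators are harmless because creation operators raise the grading, and a finitely generated sub-bimodule over a finite-dimensional $\rC^*$-algebra is indeed finite-dimensional and orthogonally complemented with left-invariant complement.

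The genuine gap is the step you present as a consequence of reducing-ness: ``$\F_{X_0}$ is a reducing subspace\dots\ Consequently $\|P\|_{\T_X^+}=\|P\|_{B(\F_{X_0})}$'' (and its graph analogue $\|P\|_{\T^+_{X_G}}=\|P\|_{\T^+_{X_{G_0}}}$). Reducing-ness only gives $\|P\|_{\T^+_X}=\max\bigl\{\|P|_{\F_{X_0}}\|,\,\|P|_{\F_X\ominus\F_{X_0}}\|\bigr\}\geq\|P|_{\F_{X_0}}\|$, and it is exactly the reverse inequality that your final argument needs, since $\sup_N\|\pi_N^{(d)}(P)\|$ only recovers $\|P|_{\F_{X_0}}\|$. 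This inequality is true but requires a proof; it is precisely the point on which the paper spends most of its effort in the graph case (showing that $\pi_F$ is completely isometric on $\T_F$ by means of the subspace $\D=\ol{\rho_\infty(c_0(F))\F_{X_G}}$ and an approximate identity argument). Two repairs are available inside your framework. Either prove the domination directly: decomposing $\F_X\ominus\F_{X_0}$ according to the first tensor factor lying in $X_0^\perp$ identifies it with $\F_{X_0}\otimes_\fA\bigl(X_0^\perp\otimes_\fA\F_X\bigr)$, and this identification intertwines $P|_{\F_X\ominus\F_{X_0}}$ with $(P|_{\F_{X_0}})\otimes\id$; since $T\mapsto T\otimes\id$ is a unital $*$-homomorphism on adjointable operators, $\|P|_{\F_X\ominus\F_{X_0}}\|\leq\|P|_{\F_{X_0}}\|$ (in the graph case there is an extra level-zero summand $c_0(V\setminus V_0)$, on which $P$ acts as $0$). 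Or avoid the comparison altogether: your product is already indexed over \emph{all} finite-dimensional sub-bimodules $X_1$ and all levels $N$ (respectively all finite subgraphs), and since $\bigcup_{X_1,N}\F_N(X_1)$ is dense in $\F_X$, the supremum of $\|\pi^{(d)}_{X_1,N}(P)\|$ over all $(X_1,N)$ equals $\|P\|$ without ever comparing $\T^+_{X_0}$ with $\T^+_X$. Either way the proof closes; as written, the key inequality is asserted rather than proved.
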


For the remaining two sections, our focus shifts from residual finite-dimensionality of non-selfadjoint operator algebras to that of some of their $\rC^*$-covers. First, in Section \ref{S:C*max}, we consider the maximal $\rC^*$-cover. The situation is particularly transparent for finite-dimensional operator algebras (Theorem \ref{T:C*maxfdimA}).

\begin{theorem}\label{T:mainB}
Let $\A$ be a finite-dimensional operator algebra. Then, $\rC^*_{\max}(\A)$ is RFD.
\end{theorem}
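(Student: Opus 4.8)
The plan is to exploit the defining universal property of the maximal $\rC^*$-cover, which puts the $*$-representations of $\rC^*_{\max}(\A)$ in bijection with the completely contractive homomorphisms of $\A$. Under this correspondence the finite-dimensional $*$-representations of $\rC^*_{\max}(\A)$ are precisely the extensions $\td\pi$ of completely contractive homomorphisms $\pi$ of $\A$ into finite-dimensional $\rC^*$-algebras. Since a $\rC^*$-algebra is RFD exactly when it admits a separating family of finite-dimensional $*$-representations (equivalently, when such representations recover the norm of every element), the theorem reduces to the assertion that the finite-dimensional completely contractive homomorphisms of $\A$ \emph{norm} $\rC^*_{\max}(\A)$: for every $x\in\rC^*_{\max}(\A)$ one should have $\|x\|=\sup_\pi\|\td\pi(x)\|$, the supremum being over completely contractive homomorphisms $\pi$ of $\A$ into matrix algebras.

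Because the norm on $\rC^*_{\max}(\A)$ is by construction $\|x\|=\sup_\rho\|\td\rho(x)\|$ over \emph{all} completely contractive homomorphisms $\rho\colon\A\to B(K)$, and because every $x$ is a norm-limit of $*$-polynomials in finitely many elements of $\A$, it suffices to prove one approximation statement. Fixing a linear basis $a_1,\dots,a_m$ of the finite-dimensional algebra $\A$, a $*$-polynomial expression $x=p(a_1,\dots,a_m,a_1^*,\dots,a_m^*)$, a completely contractive homomorphism $\rho\colon\A\to B(K)$, and $\eps>0$, I would seek a \emph{finite-dimensional} completely contractive homomorphism $\pi$ of $\A$ with $\|\td\pi(x)\|>\|\td\rho(x)\|-\eps$. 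Granting this, every non-zero element of $\rC^*_{\max}(\A)$ is detected by some finite-dimensional representation, the family of all such representations is separating, and RFD follows at once.

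To build $\pi$, I would first pick a unit vector in $K$ nearly attaining $\|\td\rho(x)\|$ and a finite-dimensional subspace $M\subseteq K$ containing it together with the images of the words in $\rho(a_i),\rho(a_i)^*$ appearing in $p$, chosen moreover to be \emph{approximately reducing} for these finitely many operators. Compressing gives a completely contractive map $\sigma=P_M\,\rho(\cdot)\,P_M\colon\A\to B(M)$ which, by the approximate invariance of $M$, simultaneously nearly realizes the norm of $x$ and is \emph{approximately multiplicative}; here finite-dimensionality of $\A$ is essential, since it upgrades approximate multiplicativity on the fixed basis to genuine closeness of $\sigma$ to a homomorphism. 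The map $\sigma$ is only completely contractive and not multiplicative, so the decisive step is to correct it to a bona fide finite-dimensional completely contractive homomorphism $\pi$ that is close to $\sigma$ in completely bounded norm. As $p$ is fixed, such closeness forces $\|\td\pi(x)\|=\|p(\pi(a_1),\dots,\pi(a_m),\pi(a_1)^*,\dots,\pi(a_m)^*)\|$ to stay within $\eps$ of the corresponding value computed with $\sigma$, completing the estimate.

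The main obstacle is exactly this repair step: passing from the approximately multiplicative compression $\sigma$ to a nearby honest finite-dimensional homomorphism $\pi$ while keeping complete contractivity. This is where the failure of finite-dimensional operator algebras to be completely isometrically matricial (flagged in the introduction) makes itself felt, and it is the reason a naive truncation does not suffice; I expect to carry out the correction using the quantitative matrix-algebra approximation results of Section~\ref{S:structure}, namely Theorems~\ref{T:fdimRFDnorm} and~\ref{T:RFDbimodule}. Everything else is a routine combination of the universal property of $\rC^*_{\max}(\A)$ with the standard fact that a separating family of finite-dimensional representations witnesses residual finite-dimensionality.
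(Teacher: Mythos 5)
Your opening reduction --- via the universal property of $\rC^*_{\max}(\A)$ and density of $*$-polynomials, it suffices to show that finite-dimensional completely contractive homomorphisms of $\A$ (nearly) norm each word in the elements of $\A\cup\A^*$ --- is correct, and it is the same first move as the paper's. The gap lies in how you manufacture the finite-dimensional homomorphism, and it occurs at both of the steps you yourself flag. First, you require the compression subspace $M$ to be \emph{approximately reducing} for the finitely many operators $\rho(a_i)$, $\rho(a_i)^*$. The existence of finite-dimensional approximately reducing subspaces containing prescribed vectors is a quasidiagonality assertion, and it fails for general finite sets of operators: the unilateral shift is not a quasidiagonal operator, so for a suitable tolerance and finite set of vectors no finite-rank projection nearly commutes with it while nearly fixing those vectors. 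Nothing in your outline isolates a feature of the images $\rho(a_i)$ that would rescue this, and the claim cannot be retrofitted from the theorem itself: residual finite-dimensionality of a $\rC^*$-algebra does not make all of its representations quasidiagonal (the full $\rC^*$-algebra of the free group is RFD, yet it surjects onto the non-quasidiagonal reduced $\rC^*$-algebra). Second, even granting $M$, your ``repair step'' --- perturbing the approximately multiplicative complete contraction $\sigma=P_M\rho(\cdot)|_M$ to a nearby genuine completely contractive homomorphism --- is unsupported: Theorems \ref{T:fdimRFDnorm} and \ref{T:RFDbimodule} are norm-attainment and embedding results for honest finite-dimensional operator algebras and say nothing about stability of approximately multiplicative maps, and no such perturbation theorem is proved in the paper or available off the shelf. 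Since the whole argument funnels through these two unproved claims, the proposal does not close.

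The paper's proof (Theorem \ref{T:C*maxfdimA}, via Lemma \ref{L:fdiminvsub}) sidesteps both problems by trading ``approximately reducing'' for \emph{exactly invariant}, which costs nothing precisely because $\A$ is finite-dimensional. Given a word $s=\sum_{j} c^{(j)}_1\cdots c^{(j)}_{N_j}$ with $c^{(j)}_k\in\A\cup\A^*$, take a state with $\psi(s^*s)=\|s\|^2$ and its GNS representation $\sigma$ with cyclic vector $\xi$; let $\fF_0$ be the span of $\xi$ and of all tails $\sigma(c^{(j)}_k)\sigma(c^{(j)}_{k+1})\cdots\sigma(c^{(j)}_{N_j})\xi$, and put $\fF=\fF_0+\spn\sigma(\A)\fF_0$, which is finite-dimensional \emph{because $\A$ is} and is genuinely invariant for $\sigma(\A)$. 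Compression of a homomorphism to an invariant subspace is exactly multiplicative, so $a\mapsto P_{\fF}\sigma(a)|_{\fF}$ is an honest completely contractive homomorphism, which the universal property extends to a $*$-homomorphism $\pi$ of $\rC^*_{\max}(\A)$ --- no repair is needed. Adjoints are handled not by making $\fF$ reducing but by the choice of $\fF_0$: since every intermediate vector of the word lies in $\fF$, one gets $\pi(s)\xi=\sigma(s)\xi$ exactly, hence $\|\pi(s)\|\geq\|\sigma(s)\xi\|=\|s\|$. If you replace your approximate-reducibility-plus-repair scheme by this one-sided invariance argument, your outline becomes the paper's proof.
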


We then ask whether $\A$ being RFD is equivalent to $\rC^*_{\max}(\A)$ being RFD, and exhibit supporting examples and sufficient conditions for that equivalence to hold (Corollary  \ref{C:RFDdirectsum} and Theorem \ref{T:idealRFD}).

Finally, in Section \ref{S:C*env} we replace the maximal $\rC^*$-cover in the previous considerations by the minimal one, also known as the $\rC^*$-envelope. We exhibit an example of an RFD operator algebra $\A$ for which $\rC^*_e(\A)$ is not RFD. The main results of the section  identify conditions on an RFD operator algebra $\A$ that are sufficient for the $\rC^*$-envelope $\rC^*_e(\A)$ to be RFD (Theorems \ref{T:quotientRFD} and \ref{T:epssurj}). To state these results, we need the following notation. Let $(r_n)_n$ be a sequence of positive integers. For each $m\in \bN$ we let $\gamma_m:\prod_{n=1}^\infty \bM_{r_n}\to \bM_{r_m}$ denote the natural projection. Let $\A\subset \prod_{n=1}^\infty \bM_{r_n}$ be a unital operator algebra and let $\fK$ denote the ideal of compact operators in $\rC^*(\A)$.

\begin{theorem}\label{T:mainD}
The following statements hold. 
\begin{enumerate}
\item[\rm{(1)}]  Assume that every $\rC^*$-algebra which is a quotient of $\rC^*(\A)/\fK$ is RFD. Then, $\rC^*_e(\A)$ is RFD. In particular, this holds if $\rC^*(\A)/\fK$ is commutative or finite-dimensional.

\item[\rm{(2)}]  Assume that there is $N\in \bN$ with the property that $\gamma_n|_{\fK\cap \A}$ is a complete quotient map onto $\bM_{r_n}$ for every $n\geq N$. Then, $\rC^*_e(\A)$ is RFD.
\end{enumerate}
\end{theorem}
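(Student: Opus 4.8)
My plan is to treat both statements through a careful analysis of the ideal of compacts $\fK$ sitting inside $\rC^*(\A)\subseteq\prod_n\bM_{r_n}\subseteq B(\H)$, where $\H=\bigoplus_n\bC^{r_n}$. The basic observation I would record first is that, because $\fK$ consists of \emph{block-diagonal} compact operators, every irreducible representation of $\fK$ is finite-dimensional. Indeed, writing $\fK\cong\bigoplus_i\bK(\H_i)$ as a $c_0$-sum of elementary $\rC^*$-algebras, each summand contains the rank-one operators on $\H_i$; for all of these to be block-diagonal, $\H_i$ must lie in a single block $\bC^{r_{n(i)}}$, whence $\dim\H_i\le r_{n(i)}<\infty$. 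Consequently, any irreducible representation of $\rC^*(\A)$ that does not annihilate $\fK$ restricts to an irreducible representation of some $\bK(\H_i)$, and is therefore finite-dimensional.

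For statement (1), let $\J\subseteq\rC^*(\A)$ be the Shilov boundary ideal, so that $\rC^*_e(\A)=\rC^*(\A)/\J$, with quotient map $q$. Set $\fK_e=q(\fK)$, an ideal of $\rC^*_e(\A)$. Being a quotient of $\fK$, it is again a $c_0$-sum of finite-dimensional matrix algebras, so all of its irreducible representations are finite-dimensional. On the other hand, $\rC^*_e(\A)/\fK_e\cong\rC^*(\A)/(\fK+\J)$ is a quotient of $\rC^*(\A)/\fK$, hence RFD by hypothesis. I would then invoke the following principle: a $\rC^*$-algebra that is an extension of an RFD $\rC^*$-algebra by an ideal all of whose irreducible representations are finite-dimensional is itself RFD. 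The verification is a separation argument: an element outside $\fK_e$ is detected by a finite-dimensional representation pulled back from the RFD quotient, while a nonzero element of $\fK_e$ is detected by an irreducible representation of $\fK_e$, which is finite-dimensional and extends to a finite-dimensional irreducible representation of $\rC^*_e(\A)$ on the same space. This gives (1); the two special cases follow since commutative and finite-dimensional $\rC^*$-algebras have all their quotients RFD.

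For statement (2), I would first extract the structural consequence of the hypothesis. Since $\gamma_n|_{\fK\cap\A}$ surjects onto $\bM_{r_n}$ and $\fK=\bigoplus_i\bK(\H_i)$ is a $c_0$-sum of finite-dimensional blocks, the $*$-homomorphism $\gamma_n|_\fK$ maps onto the simple algebra $\bM_{r_n}$, which forces a single summand $\bK(\H_{i_0})$ to coincide with the full $n$-th block; hence $\bM_{r_n}\subseteq\rC^*(\A)$ as an ideal and $\gamma_n(\A)=\bM_{r_n}$ for every $n\ge N$. The crux of the argument, and the step I expect to be the main obstacle, is to show that each such $\gamma_n$ is a boundary representation for $\A$, i.e. that $\gamma_n|_\A$ has the unique extension property. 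This is where the full strength of the \emph{complete} quotient hypothesis enters. Given a unital completely positive $\psi:\rC^*(\A)\to\bM_{r_n}$ extending $\gamma_n|_\A$, I would use that the complete quotient map $\gamma_n|_{\fK\cap\A}$ lifts each unitary of $\bM_{r_n}$ to a contraction in $\A$ (up to arbitrarily small error), so that these lifts lie in the multiplicative domain of $\psi$; since the unitaries generate $\bM_{r_n}$, a multiplicative-domain argument then forces $\psi=\gamma_n$. Keeping track of the approximate multiplicative domain, rather than an exact one, is the delicate point here.

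With boundariness of the representations $\gamma_n$ for $n\ge N$ in hand, the conclusion is quick. The Shilov ideal is contained in the intersection of the kernels of all boundary representations, so $\J\subseteq\bigcap_{n\ge N}\ker\gamma_n=\rC^*(\A)\cap\bigoplus_{m<N}\bM_{r_m}$, which is finite-dimensional. A finite-dimensional ideal of a $\rC^*$-algebra is unital with a central unit, hence a direct summand, so $\rC^*_e(\A)=\rC^*(\A)/\J$ is $*$-isomorphic to a $\rC^*$-subalgebra of $\rC^*(\A)\subseteq\prod_n\bM_{r_n}$. As every $\rC^*$-subalgebra of a product of matrix algebras is RFD, this proves (2).
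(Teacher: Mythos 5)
Your two parts fare quite differently, and both take routes genuinely different from the paper's (which never touches the Shilov ideal directly, nor individual boundary representations: it takes a representation $\pi$ with the unique extension property, splits it as $\pi_{\fK}\oplus\sigma$ according to whether $\fK$ acts non-degenerately or is killed, and treats the two pieces separately). For part (1) your extension argument is structurally sound: an extension of an RFD $\rC^*$-algebra by an ideal all of whose irreducible representations are finite-dimensional is RFD, since irreducible representations of a closed ideal extend to the ambient algebra on the same space. The problem is your justification of the cornerstone fact that every irreducible representation of $\fK$ (hence of its quotient $\fK_e$) is finite-dimensional. The claim that each summand $\bK(\fH_i)$ ``must lie in a single block'' is false: for $\fK=\{t\oplus t:t\in\bM_2\}\subset\bM_2\oplus\bM_2$ the unique irreducible summand occurs with multiplicity two, its subrepresentation spaces (the diagonal and antidiagonal of $\bC^2\oplus\bC^2$) lie in no single block, and the elements corresponding to rank-one operators of the abstract summand are not rank-one operators on the big space. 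The fact you need is true, but it is precisely the paper's Lemma \ref{L:redK}, whose proof requires a genuinely more careful argument (producing a vector with non-zero components in infinitely many blocks and contradicting $\lim_n\|\gamma_n(\Xi)\|=0$); as written, you have assumed the hardest technical point of part (1).

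For part (2) the gap is more serious, and it sits exactly where you flag ``the delicate point.'' Your whole proof rests on the claim that each $\gamma_n$, $n\geq N$, is a boundary representation for $\A$, and the approximate multiplicative domain sketch does not establish it. If $a\in\fK\cap\A$ with $\|a\|\leq 1+\eps$ and $\gamma_n(a)=u$ unitary, then indeed the defect $\psi(a^*a)-\psi(a)^*\psi(a)$ is of order $\eps$ and $\|\psi(a^*x)-u^*\psi(x)\|\leq C\sqrt{\eps}\,\|x\|$ for all $x$. But $\rC^*(\A)$ is generated by $\A$, not by these lifts: a general $b\in\A$ is (a combination of lifts) plus an element $k\in\A\cap\ker\gamma_n$, and for such $k$ you only know $\psi(k)=0$; the defect $\psi(k^*k)$ is not small for any a priori reason, so $\psi(k^*x)$ is uncontrolled and multiplicativity of $\psi$ on words in $\A\cup\A^*$ does not follow. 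That the unitaries span $\bM_{r_n}$ is beside the point, since the issue is control of $\psi$ on the domain, not the range. It is not even clear that the individual boundary property is true under the hypothesis; the paper avoids it altogether by proving a collective statement: the hypothesis yields, for each $A\in\bM_d(\A)$ and $\delta>0$, a compact perturbation $A+\delta K$ with $K\in\bM_d(\fK\cap\A)$ satisfying $\|\kappa^{(d)}(A+\delta K)\|<\|A+\delta K\|$, so that Theorem \ref{T:smallkappa} applies: the summand $\pi_{\fK}$ of a representation with the unique extension property is completely isometric on $\A$, inherits that property as a whole, and decomposes into finite-dimensional pieces by Lemma \ref{L:repfK}. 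Two smaller points: your assertion that a summand of $\fK$ ``coincides with the full $n$-th block'' has the same multiplicity problem as in part (1) (though it is not load-bearing), while your closing steps (the Shilov ideal lies in the finite-dimensional ideal $\bigcap_{n\geq N}\ker\gamma_n$, a finite-dimensional ideal is a direct summand, and subalgebras of $\prod_n\bM_{r_n}$ are RFD) are correct granted the boundary claim.
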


\textbf{Acknowledgements.} The first author wishes to thank Matt Kennedy for a stimulating discussion which brought \cite{pestov1994} to his attention and sparked his interest in the residual finite-dimensionality of $\rC^*$-covers.

\section{Preliminaries}\label{S:prelim}

\subsection{Operator algebras and $\rC^*$-covers}
Throughout the paper, $\fH$ will denote a complex Hilbert space and $B(\fH)$ will denote the space of bounded linear operators on it.  An \emph{operator algebra} is simply a norm closed subalgebra $\A\subset B(\fH)$. It will be said to be unital if it contains the identity on $\fH$. Given a positive integer $n\in \bN$, we denote by $\bM_n(\A)$ the space of $n\times n$ matrices with entries in $\A$. When $\A=\bC$, we simply write $\bM_n$ instead of $\bM_n(\bC)$. The norm on $\bM_n(\A)$ is that inherited from $B(\fH^{(n)})$, where $\fH^{(n)}=\fH\oplus \fH\oplus \ldots \oplus \fH$. Given a linear map $\phi:\A\to B(\fH_\phi)$, we denote by $\phi^{(n)}$ the natural ampliation to $\bM_n(\A)$. Recall that $\phi$ is said to be \emph{completely contractive} (respectively,  \emph{completely isometric}) if $\phi^{(n)}$ is contractive (respectively, isometric) for every $n\in \bN$. More generally, $\phi$ is \emph{completely bounded} if the quantity
\[
\|\phi\|_{cb}=\sup_{n\in \bN}\|\phi^{(n)}\|
\]
is finite. The reader may consult \cite{paulsen2002} for details.

Typically, we consider an operator algebra $\A$ to be determined only up to completely isometric isomorphism. In particular, there are many different $\rC^*$-algebras that a copy of $\A$ can generate, and the following notion formalizes this idea. A \emph{$\rC^*$-cover} of $\A$ is a pair $(\fA, \iota)$ consisting of a $\rC^*$-algebra $\fA$ and a complete isometric homomorphism $\iota : \A \rightarrow \fA$ such that $\rC^*(\iota(\A)) = \fA$. For our purposes, we will be focusing on two particular $\rC^*$-covers, which we now describe.

The \emph{maximal }$\rC^*$-cover $(\rC^*_{\max}(\A),\mu)$ of $\A$ is the essentially unique $\rC^*$-cover with the property that whenever $\phi:\A\to B(\fH_\phi)$ is a completely contractive homomorphism, there is a $*$-homomorphism $\pi_\phi:\rC^*_{\max}(\A)\to B(\fH_\phi)$ with the property that $\pi_\phi\circ \mu=\phi$ on $\A$.  The algebra $\rC_{\max}^*(\A)$ can be realized as the $\rC^*$-algebra generated by the image of $\A$ under an appropriate direct sum of completely contractive homomorphisms \cite{blecher1999}. 

There is a purely linear version of this construction which we will require as well.  Let $\M\subset B(\fH)$ be a subspace. We can associate to it a ``free" $\rC^*$-algebra $\rC^*\langle \M \rangle$ and a completely isometric linear map $\mu:\M\to \rC^*\langle \M \rangle $ such that $\rC^*\langle \M \rangle=\rC^*(\mu(\M))$, and whenever $\phi:\M\to B(\fH_\phi)$ is a completely contractive linear map, there is a unital $*$-homomorphism $\pi_\phi:\rC^*\langle \M \rangle\to B(\fH_\phi)$ with the property that $\pi_\phi\circ \mu=\phi$ on $\M$. Once again, $\rC^*\langle \M \rangle$ can be realized more concretely as the $\rC^*$-algebra generated by the image of $\M$ under an appropriate direct sum of completely contractive linear maps \cite[Theorem 3.2]{pestov1994}.

We now turn to the ``minimal" $\rC^*$-cover, which is the so-called $\rC^*$-envelope of a unital operator algebra. In fact, it will be convenient for us to give the definition for general unital subspaces $\S\subset B(\fH)$ rather than operator algebras. Let $\eps$ be a unital completely isometric linear map on $\S$. Then, $\rC^*(\eps(\S))$ is the \emph{$\rC^*$-envelope} of $\S$, denoted by $\rC^*_e(\S)$, if whenever $\phi:\S\to B(\fH_\phi)$ is a unital completely isometric linear map, there is a $*$-homomorphism $\pi:\rC^*(\phi(\S))\to \rC^*_e(\S)$ with the property that $\pi \circ \phi=\eps$ on $\S$. The uniqueness of such an object is easily verified, but the existence of the $\rC^*$-envelope is non-trivial, and it was first established in \cite{hamana1979}. Practically speaking, an approach pioneered by Arveson \cite{arveson1969} is often more useful to identify the $\rC^*$-envelope. This approach is based on a rather deep analogy with the classical theory of uniform algebras and the Shilov and Choquet boundaries. We recall the details that will be relevant for us. 

Assume that $\S\subset B(\fH)$ is a unital subspace. A unital completely contractive linear map $\phi:\S\to B(\fH_\phi)$ always admits a unital completely contractive extension to $\rC^*(\S)\subset B(\fH)$ by Arveson's extension theorem. Accordingly, we say that a unital $*$-homomorphism $\pi:\rC^*(\S)\to B(\fH_\pi)$ has the \emph{unique extension property} with respect to $\S$ if it is the only unital completely contractive extension to $\rC^*(\S)$ of $\pi|_{\S}$. It is known \cite{arveson1969} that if a unital $*$-homomorphism has the unique extension property with respect to $\S$ and $\pi|_{\S}$ is completely isometric, then we can choose $\eps=\pi|_\S$ and thus $\rC^*_e(\S)\cong \pi(\rC^*(\S))$.  In this case, $\ker \pi$ is called the \emph{Shilov ideal} of $\S$. We note that $\rC^*_e(\S)\cong \rC^*(\S)/\ker \pi$, and the defining property of the $\rC^*$-envelope implies that the Shilov ideal is the largest closed two-sided ideal $\J$ of $\rC^*(\S)$ with the property that the quotient map $\rC^*(\S)\to \rC^*(\S)/\J$ is completely isometric on $\S$.
 Finally, we emphasize there are known mechanisms to produce such $*$-homomorphisms with the unique extension property with respect to $\S$ which are completely isometric on $\S$ \cite{dritschel2005},\cite{arveson2008},\cite{davidsonkennedy2015}.

\subsection{Residual finite dimensionality}

Let $\fA$ be a $\rC^*$-algebra.  Then, $\fA$ is said to be \emph{residually finite-dimensional} (henceforth abbreviated to RFD) if it admits a separating family of finite-dimensional $*$-representations. In other words, $\fA$ is RFD if there is a set of positive integers $\{r_\lambda:\lambda\in \Lambda\}$ and an injective $*$-homomorphism 
\[
\pi:\fA\to \prod_{\lambda\in \Lambda}\bM_{r_\lambda}.
\]
Equivalently, the map $\pi$ must be completely isometric.

As done in \cite{CM2017rfd}, we can extend this definition to general operator algebras. An operator algebra $\A$ is RFD  if there is a set of positive integers $\{r_\lambda:\lambda\in \Lambda\}$ and a completely isometric homomorphism
\[
\rho:\A\to \prod_{\lambda\in \Lambda}\bM_{r_\lambda}.
\]
Upon recalling that a completely contractive homomorphism on a $\rC^*$-algebra is necessarily positive, we see that this definition agrees with the previous one whenever $\A$ happens to be self-adjoint.

\section{Structure of finite-dimensional operator algebras}\label{S:structure}

\subsection{Completely bounded embeddings in matrix algebras}

Before proceeding with our investigation of RFD operator algebras, we first need to understand finite-dimensional ones. By analogy with finite-dimensional $\rC^*$-algebras, one may naively conjecture that finite-dimensional operator algebras are exactly those which are completely isometrically isomorphic to subalgebras of direct sums of matrix algebras. In the unital case, this is equivalent to admitting a finite-dimensional $\rC^*$-envelope. This conjecture is supported by  \cite[Theorem 4.2]{meyer2001} in the case of unital two-dimensional operator algebras. However, typically things are not so straightforward. Before illustrating this fact with an example, we record a useful calculation that will be used several times throughout.

\begin{lemma}\label{L:M2C*env}
Let $\S\subset B(\fH)$ be a unital subspace and let $\A_{\S}\subset B(\fH^{(2)})$ be the unital operator algebra consisting of elements of the form
$
\begin{bmatrix}
\lambda I & s\\
0 & \mu I
\end{bmatrix}
$
for some $s\in \S$ and $\lambda,\mu\in \bC$. Then, we have that
\[
\rC^*_e(\A_{\S})\cong\bM_2(\rC_e^*(\S)).
\]
\end{lemma}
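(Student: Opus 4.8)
The plan is to realize $\rC^*_e(\A_\S)$ as a quotient of a concrete $\rC^*$-cover of $\A_\S$ by its Shilov ideal, and then to identify that Shilov ideal with the ampliation of the Shilov ideal of $\S$. First I would record that $\A_\S\subseteq \bM_2(\S)\subseteq \bM_2(\rC^*(\S))$ and that in fact $\rC^*(\A_\S)=\bM_2(\rC^*(\S))$: since $I\in\S$, the algebra $\A_\S$ contains the partial isometry $\left[\begin{smallmatrix} 0 & I \\ 0 & 0\end{smallmatrix}\right]$ and the two diagonal idempotents, and multiplying these against $\left[\begin{smallmatrix} 0 & s \\ 0 & 0\end{smallmatrix}\right]$ and its adjoint recovers $\left[\begin{smallmatrix} s & 0 \\ 0 & 0\end{smallmatrix}\right]$ and $\left[\begin{smallmatrix} 0 & 0 \\ 0 & s\end{smallmatrix}\right]$ for every $s\in\S$; passing to the generated $\rC^*$-algebra fills in every corner with all of $\rC^*(\S)$, so the generated algebra is $\bM_2(\rC^*(\S))$. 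Thus $\bM_2(\rC^*(\S))$ is a $\rC^*$-cover of $\A_\S$ and $\rC^*_e(\A_\S)\cong \bM_2(\rC^*(\S))/\J(\A_\S)$, where $\J(\A_\S)$ is the Shilov ideal of $\A_\S$. Because every closed two-sided ideal of $\bM_2(\fB)$ has the form $\bM_2(\K)$ for a unique closed two-sided ideal $\K\triangleleft\fB$, I may write $\J(\A_\S)=\bM_2(\J_0)$ and obtain $\rC^*_e(\A_\S)\cong \bM_2(\rC^*(\S)/\J_0)$. Letting $\J$ denote the Shilov ideal of $\S$ in $\rC^*(\S)$, so that $\rC^*(\S)/\J\cong \rC^*_e(\S)$, the entire lemma reduces to the single claim $\J_0=\J$.

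The key tool for this identification is the standard fact, a direct consequence of Paulsen's $2\times 2$ off-diagonalization trick (see \cite{paulsen2002}), that the operator algebra $\A_\S$ is governed entirely by the operator space structure of $\S$. Precisely: for unital operator spaces $\M,\N$ and a unital linear map $\phi:\M\to\N$, let $\td\phi:\A_\M\to\A_\N$ be the induced unital homomorphism acting as $\phi$ on the $(1,2)$-corner (equivalently, the restriction of the ampliation $\phi^{(2)}$ to $\A_\M\subseteq\bM_2(\M)$). Then $\td\phi$ is completely contractive, respectively completely isometric, if and only if $\phi$ is. One way to see this is to pass to the associated Paulsen operator systems: a unital complete isometry $\phi$ induces a unital complete order isomorphism between them, which is in particular completely isometric, and restricting to the upper-triangular subalgebra $\A_\M$ shows that $\td\phi$ is completely isometric; the converse is immediate since $\M$ embeds completely isometrically as the $(1,2)$-corner of $\A_\M$.

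With this in hand I would prove $\J_0=\J$ by two applications of the maximality of Shilov ideals recalled in the preliminaries. For $\J_0\subseteq\J$: the quotient $\bM_2(\rC^*(\S))\to\bM_2(\rC^*(\S)/\J_0)$ is $q_0^{(2)}$ for the quotient $*$-homomorphism $q_0:\rC^*(\S)\to\rC^*(\S)/\J_0$, and its restriction to $\A_\S$ is the induced map $\td{q_0}$ (note $q_0|_\S$ is unital). Since $\bM_2(\J_0)=\J(\A_\S)$ is the Shilov ideal, $\td{q_0}$ is completely isometric, so by the tool above $q_0|_\S$ is completely isometric; hence $\J_0$ is a boundary ideal for $\S$ and $\J_0\subseteq\J$ by maximality. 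For the reverse inclusion: the quotient $q:\rC^*(\S)\to\rC^*(\S)/\J$ is completely isometric on $\S$, so the induced map $\td q$ is completely isometric on $\A_\S$, exhibiting $\bM_2(\J)$ as a boundary ideal for $\A_\S$; maximality of $\J(\A_\S)=\bM_2(\J_0)$ then forces $\bM_2(\J)\subseteq\bM_2(\J_0)$, i.e. $\J\subseteq\J_0$. Therefore $\J_0=\J$ and $\rC^*_e(\A_\S)\cong\bM_2(\rC^*(\S)/\J)\cong\bM_2(\rC^*_e(\S))$. I expect the only genuinely non-formal ingredient to be the Paulsen-trick equivalence of the middle paragraph; everything else is bookkeeping with ideals of $\bM_2$ and the maximality property of the Shilov ideal.
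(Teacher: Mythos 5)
Your proposal is correct and follows essentially the same route as the paper's proof: compute $\rC^*(\A_\S)=\bM_2(\rC^*(\S))$, use the fact that every closed two-sided ideal of $\bM_2(\rC^*(\S))$ has the form $\bM_2(\I)$, identify the Shilov ideal of $\A_\S$ as $\bM_2(\Sigma)$ where $\Sigma$ is the Shilov ideal of $\S$, and pass to quotients. The only difference is that the paper asserts the Shilov-ideal identification with no further detail, whereas you justify it explicitly (via Paulsen's off-diagonal trick in both directions together with maximality of Shilov ideals), which is precisely the one non-routine step being glossed over.
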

\begin{proof}
A straightforward calculation shows that 
\[
\rC^*(\A_{\S})=\bM_2(\rC^*(\S)).
\]
Moreover, a routine argument using matrix units reveals that $\J\subset\bM_2(\rC^*(\S))$ is a closed two-sided ideal if and only if there is a closed two-sided ideal $\I\subset \rC^*(\S)$ with the property that $\J=\bM_2(\I)$. Therefore, if we let $\Sigma\subset \rC^*(\S)$ denote the Shilov ideal of $\S$, then we find that $\bM_2(\Sigma)$ is the Shilov ideal of $\A_{\S}$. Hence,
\begin{align*}
\rC^*_e(\A_{\S})&\cong \rC^*(\A_{\S})/\bM_2(\Sigma)=\bM_2(\rC^*(\S))/\bM_2(\Sigma)\\
&\cong \bM_2(\rC^*(\S)/\Sigma)\cong \bM_2(\rC_e^*(\S)).
\end{align*}
\end{proof}

Using this fact, we can give an example of a finite-dimensional unital operator algebra with infinite-dimensional $\rC^*$-envelope. The following is \cite[Exercise 15.12]{paulsen2002}.

\begin{example}\label{E:fdoa}
Let $\rC(\bT)$ denote the unital $\rC^*$-algebra of continuous functions on the unit circle $\bT$ and consider the unital subspace $\S=\spn\{1,z\}\subset \rC(\bT)$. For each $\zeta\in \bT$, we define the function $\phi_\zeta\in \S$ as
\[
\phi_\zeta(z)=\frac{1}{2}(1+\ol{\zeta}z), \quad z\in \bT.
\]
Then, $\phi_\zeta$ peaks at $\zeta$, which forces $\zeta$ to belong to the Shilov boundary of $\S$ (see \cite{phelps2001} for details). Thus, the Shilov boundary of $\S$ is $\bT$ and thus $\rC^*_e(\S)\cong \rC(\bT)$.

Now, let $\A_\S\subset \bM_2(\rC(\bT))$ be the unital operator algebra defined in Lemma \ref{L:M2C*env}. Then, we see that $\A_\S$ is finite-dimensional and
\[
\rC_e^*(\A_\S)\cong \bM_2(\rC^*_e(\S))\cong \bM_2(\rC(\bT))
\]
is infinite-dimensional.
 \qed
\end{example}

To reiterate, the operator algebra $\A_{\S}$ in the example above cannot be embedded completely isometrically isomorphically in a matrix algebra, for then $\rC_e^*(\A_\S)$ would be finite-dimensional. Hence, finite-dimensional operator algebras exhibit more varied behaviour than their self-adjoint counterparts. Nevertheless, we note that the classical Artin-Wedderburn theorem can be used to show that semisimple finite-dimensional operator algebras are \emph{similar} to direct sums of matrix algebras. It thus appears that if we are willing to settle for a softer classification of finite-dimensional operator algebras, replacing completely isometric isomorphisms by merely completely bounded ones, then we can recover the familiar description available for $\rC^*$-algebras. This is indeed the case, and establishing this fact is the first goal of this section. One of the basic ingredients is the following.

\begin{proposition}\label{P:fdimoacb}
Let $\A\subset B(\fH)$ be an operator algebra with dimension $d$. Then, there is a positive integer $r\geq 1$, a subalgebra $\B\subset \bM_r$ and a completely contractive algebra isomorphism $\Phi:\A\to \B$ with  $\|\Phi^{-1}\|_{cb} \leq 2d.$ If $\A$ is unital, then $\Phi$ can be chosen to be unital.
\end{proposition}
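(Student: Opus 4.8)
The plan is to build a single faithful finite-dimensional representation of $\A$, render it completely contractive, and then read off the bound on the inverse from a dimension count. First I would reduce to the unital case: if $\A$ is not unital I replace it by its unitization $\A^1=\A+\bC I_\fH\subset B(\fH)$, a unital operator algebra of dimension at most $d+1$; a unital completely contractive isomorphism of $\A^1$ onto a subalgebra of $\bM_r$ restricts to one on $\A$, and this extra unit is exactly what the factor $2$ in the bound $2d$ is there to absorb (since $d+1\le 2d$). For the unital algebra I then invoke pure algebra: a finite-dimensional unital algebra acts faithfully on itself by left multiplication, so after fixing any inner product on the underlying vector space I obtain a unital injective homomorphism $\sigma:\A^1\to B(\A^1)\cong\bM_r$ onto a subalgebra $\B_0$, with $r=\dim\A^1\le d+1$. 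A priori $\sigma$ is merely a bounded isomorphism, with no relation between its matricial norms and those inherited from $B(\fH)$.

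The two remaining tasks --- complete contractivity of the map and control of its inverse --- both rest on one quantitative operator-space fact: on a fixed $n$-dimensional vector space the formal identity between any two operator-space structures is completely bounded with cb-norm at most $n$. This follows by factoring through the minimal and maximal quantizations and invoking Paulsen's estimate $\|\mathrm{id}:\mathrm{MIN}\to\mathrm{MAX}\|_{cb}\le n$. Identifying $\A^1$ with $\B_0$ through $\sigma$, the map $\sigma$ is precisely such a formal identity, so $\|\sigma\|_{cb}\le r$; in particular $\sigma$ is completely bounded. Paulsen's similarity theorem then furnishes an invertible $S$ for which $\Phi:=S\sigma(\cdot)S^{-1}$ is completely contractive. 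Being a conjugate of $\sigma$, the homomorphism $\Phi$ is still unital and injective onto a subalgebra $\B\subset\bM_r$.

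The key point is that, once $\Phi$ is completely contractive and bijective onto $\B$, its inverse is controlled for free: $\Phi^{-1}:\B\to\A^1$ is again a formal identity between two operator-space structures on an $r$-dimensional space, so the same fact gives $\|\Phi^{-1}\|_{cb}\le r\le d+1\le 2d$, independently of the similarity constant. Restricting $\Phi$ to $\A$ completes the argument, and the unital case needs no restriction since every step above preserves the unit. The main obstacle is therefore conceptual rather than computational: it is the realization that complete contractivity can be imposed by a similarity without spoiling the inverse bound, together with the dimension-linear comparison of operator-space structures that powers both halves of the proof. It is worth emphasizing that $\Phi$ cannot in general be taken to be a subrepresentation of $\A\subset B(\fH)$ --- as Example~\ref{E:fdoa} shows, $\rC^*_e(\A)$ may be infinite-dimensional and $\A$ may possess no nonzero finite-dimensional invariant subspaces --- so the passage through an abstract representation such as the left regular one is essential.
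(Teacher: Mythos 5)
Your proposal contains a genuine gap, and it sits exactly where the real difficulty of the proposition lies. The ``quantitative operator-space fact'' you rely on is misstated: the estimate $\|\mathrm{id}:\mathrm{MIN}(V)\to \mathrm{MAX}(V)\|_{cb}\leq \dim V$ compares two operator space structures that induce the \emph{same} norm on $V$. For a general linear bijection $\phi$ between $d$-dimensional operator spaces it yields only $\|\phi\|_{cb}\leq d\,\|\phi\|$, so the plain norm must be controlled first (this is precisely how the paper uses \cite[Proposition 2.8]{paulsen1992}). Two operator space structures on the same vector space can induce wildly different norms, in which case the formal identity is not even contractive at the first matrix level (consider $\bC$ with the usual norm versus $t$ times the usual norm). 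Consequently your final step fails: knowing that $\Phi=S\sigma(\cdot)S^{-1}$ is completely contractive gives no control whatsoever on $\|\Phi^{-1}\|$, hence none on $\|\Phi^{-1}\|_{cb}$ beyond $r\|\Phi^{-1}\|$. And $\|\Phi^{-1}\|$ genuinely can be large: for $t>0$ let $\B_t\subset \bM_2$ be the two-dimensional unital algebra of matrices $\begin{bmatrix} a & t(a-b)\\ 0 & b\end{bmatrix}$; the map reading off the diagonal is a unital completely contractive isomorphism of $\B_t$ onto the diagonal algebra $\bC^2\subset \bM_2$, yet its inverse has norm at least $2t$ (evaluate at $(a,b)=(1,-1)$). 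So a completely contractive isomorphism onto a subalgebra of a matrix algebra is compatible with an arbitrarily bad inverse, and nothing in your construction --- the arbitrary inner product used to identify $B(\A^1)$ with $\bM_r$, or the similarity $S$ produced by Paulsen's theorem --- rules this out.

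What is missing is exactly the ingredient the paper imports from \cite[Proposition 5.3]{CM2017rfd}: a specific matricial realization of $\A$ for which one can prove the first-level bound $\|\Phi^{-1}\|\leq 2$. Once that bound is in hand, the dimension estimate $\|\Phi^{-1}\|_{cb}\leq d\,\|\Phi^{-1}\|\leq 2d$ finishes the proof, as in the paper. Your left-regular-representation-plus-similarity scheme does produce a completely contractive isomorphism into some $\bM_r$ (that part is fine, and your closing remark that one cannot simply compress the given representation is correct, in view of Example \ref{E:fdoa}), but without a quantitative bound on the inverse it cannot reach the stated conclusion. A minor additional point: the factor $2$ in the statement is not there to absorb a unitization --- it is the norm bound on $\Phi^{-1}$ coming from \cite{CM2017rfd}, multiplied by the dimension $d$ of $\A$ itself.
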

\begin{proof}
By \cite[Proposition 5.3]{CM2017rfd}, there is a positive integer $r\in \bN$, a unital subalgebra $\B\subset \bM_r$ and a unital completely contractive isomorphism $\Phi:\A\to \B$ with the property that $\|\Phi^{-1}\|\leq 2$. Thus, \cite[Proposition 2.8]{paulsen1992} implies that $\|\Phi^{-1}\|_{cb}\leq 2d.$ In the unital case, inspection of the proof of  \cite[Proposition 5.3]{CM2017rfd} reveals that $\Phi$ can be chosen to be unital.
\end{proof}

We now describe the other ingredient that we require. Given two operator algebras $\A\subset B(\fH_1)$ and $\B\subset B(\fH_2)$, an isomorphism $\Phi:\A\to\B$ will be called a \emph{completely bounded isomorphism} if $\Phi$ and $\Phi^{-1}$ are completely bounded.
A classical theorem of Paulsen \cite{paulsen1984}, \cite{paulsen1984PAMS} says that completely bounded homomorphisms on operator algebras are necessarily similar to completely contractive ones. In \cite{clouatre2015CB}, the possibility of obtaining a ``two-sided" version of Paulsen's theorem for completely bounded isomorphisms was investigated. More precisely,  the question is this: given a completely bounded isomorphism $\Phi:\A\to\B$, do there exist two invertible operators $X\in B(\fH_1)$ and $Y\in B(\fH_2)$ such that the map
\[
XaX^{-1}\mapsto Y\Phi(a)Y^{-1}, \quad a\in \A
\]
is a complete isometry? It was shown in \cite{clouatre2015CB} that in general the answer is no. We show next that a weaker statement always hold.

\begin{theorem}\label{T:rigidity}
Let $\A\subset B(\fH_1),\B\subset B(\fH_2)$ be unital operator algebras and let $\Phi:\A\to \B$ be a unital completely bounded isomorphism. Then, there are two unital completely isometric homomorphisms 
\[
\lambda:\A\to B(\fH_1)\oplus B(\fH_2), \quad \rho:\B\to B(\fH_1)\oplus B(\fH_2)
\]
along with an invertible operator $Z\in B(\fH_1)\oplus B(\fH_2)$ with the property that
\[
\Phi(a)=\rho^{-1}(Z\lambda(a)Z^{-1}), \quad a\in \A.
\]
\end{theorem}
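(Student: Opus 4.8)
The plan is to reduce everything to two applications of Paulsen's similarity theorem, one to $\Phi$ and one to $\Phi^{-1}$, and then to absorb the resulting conjugations into a pair of direct-sum representations together with a single diagonal similarity. Concretely, I would first view $\Phi$ as a unital completely bounded homomorphism $\A\to B(\fH_2)$. Paulsen's theorem then furnishes an invertible operator $S\in B(\fH_2)$ for which the map $\sigma:\A\to B(\fH_2)$ given by $\sigma(a)=S\Phi(a)S^{-1}$ is unital and completely contractive. Symmetrically, viewing $\Phi^{-1}$ as a unital completely bounded homomorphism $\B\to B(\fH_1)$, Paulsen's theorem furnishes an invertible operator $X\in B(\fH_1)$ for which $\tau:\B\to B(\fH_1)$, $\tau(b)=X\Phi^{-1}(b)X^{-1}$, is unital and completely contractive. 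These two conjugations are the only substantive inputs; the remainder is bookkeeping.

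Next I would define the two representations on the common space $B(\fH_1)\oplus B(\fH_2)$ by
\[
\lambda(a)=a\oplus \sigma(a),\qquad \rho(b)=\tau(b)\oplus b.
\]
Each is plainly a unital homomorphism. The point is that each is in fact completely isometric: the first coordinate of $\lambda$ is the (completely isometric) inclusion $\A\hookrightarrow B(\fH_1)$, so for $A\in \bM_n(\A)$ one has $\|\lambda^{(n)}(A)\|=\max\{\|A\|,\|\sigma^{(n)}(A)\|\}=\|A\|$ because $\sigma$ is completely contractive; the same argument, using the inclusion $\B\hookrightarrow B(\fH_2)$ in the second coordinate, shows $\rho$ is completely isometric. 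I would then set $Z=X\oplus S^{-1}$, which is invertible in $B(\fH_1)\oplus B(\fH_2)$.

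It remains to verify the identity $\rho(\Phi(a))=Z\lambda(a)Z^{-1}$. A direct computation gives $Z\lambda(a)Z^{-1}=XaX^{-1}\oplus S^{-1}\sigma(a)S=XaX^{-1}\oplus\Phi(a)$, while $\rho(\Phi(a))=\tau(\Phi(a))\oplus\Phi(a)=X\Phi^{-1}(\Phi(a))X^{-1}\oplus\Phi(a)=XaX^{-1}\oplus\Phi(a)$; the two agree, and applying $\rho^{-1}$ yields the claimed formula. I do not expect a genuine obstacle here, since once the construction is in place the verification is routine. The only conceptual step is recognizing why one cannot hope for the stronger conclusion that conjugates the given representations of $\A$ and $\B$ directly, which fails in general by \cite{clouatre2015CB}; the extra direct summands $\sigma$ and $\tau$ are precisely the slack that allows a single diagonal $Z$ to implement the similarity while keeping both $\lambda$ and $\rho$ completely isometric.
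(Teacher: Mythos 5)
Your proposal is correct and follows essentially the same route as the paper's proof: two applications of Paulsen's similarity theorem (to $\Phi$ and to $\Phi^{-1}$), the direct-sum maps $\lambda(a)=a\oplus S\Phi(a)S^{-1}$ and $\rho(b)=X\Phi^{-1}(b)X^{-1}\oplus b$, and the diagonal similarity $Z=X\oplus S^{-1}$ (the paper writes $Y$ for your $S$). Your explicit justification that $\lambda$ and $\rho$ are completely isometric, via the maximum formula for norms of direct sums, is a detail the paper leaves to the reader.
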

\begin{proof}
By \cite[Theorem 3.1]{paulsen1984}, there exist invertible operators $X\in B(\fH_1)$ and $Y\in B(\fH_2)$ such that the maps
\[
a\mapsto Y\Phi(a)Y^{-1}, \quad a\in \A
\]
\[
b\mapsto X\Phi^{-1}(b)X^{-1}, \quad b\in \B
\]
are completely contractive. Define
\[
\lambda:\A\to B(\fH_1)\oplus B(\fH_2)
\]
as
\[
\lambda(a)=a\oplus Y\Phi(a)Y^{-1}, \quad a\in \A
\] 
and
\[
\rho:\B\to B(\fH_1)\oplus B(\fH_2)
\]
as
\[
\rho(b)=X\Phi^{-1}(b)X^{-1}\oplus b, \quad b\in \B.
\] 
Then, $\lambda$ and $\rho$ are completely isometric.
Put $Z=X\oplus Y^{-1}$. Then,
\begin{align*}
Z\lambda(a)Z^{-1}&=XaX^{-1}\oplus \Phi(a)=\rho(\Phi(a))
\end{align*}
for every $a\in \A$.
\end{proof}

Next, we use the previous result to achieve our first goal and further elucidate the structure of finite-dimensional operator algebras. Roughly speaking, we show that up to a similarity, finite-dimensional unital operator algebras admit finite-dimensional $\rC^*$-envelopes.

\begin{corollary}\label{C:fdoastructure}
Let $\A\subset B(\fH)$ be a unital operator algebra. Then, $\A$ is finite-dimensional if and only if there is another unital operator algebra $\F$ that is completely isometrically isomorphic to $\A$ and that is similar to an operator algebra whose $\rC^*$-envelope is finite-dimensional.
\end{corollary}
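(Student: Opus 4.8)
The plan is to prove the two implications separately. The forward implication, that finite-dimensionality forces the existence of such an $\F$, is the substantive direction and rests on combining the completely bounded matrix embedding of Proposition \ref{P:fdimoacb} with the two-sided rigidity of Theorem \ref{T:rigidity}. The reverse implication is essentially immediate from the defining embedding property of the $\rC^*$-envelope.

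For the forward direction, I would suppose $\A$ has dimension $d$. By Proposition \ref{P:fdimoacb} there is an integer $r$, a unital subalgebra $\B\subset\bM_r$, and a unital completely contractive isomorphism $\Phi:\A\to\B$ with $\|\Phi^{-1}\|_{cb}\le 2d$; in particular $\Phi$ is a unital completely bounded isomorphism. I would then feed $\Phi$ into Theorem \ref{T:rigidity} (with $\fH_1=\fH$ and $B(\fH_2)=\bM_r$), obtaining unital completely isometric homomorphisms $\lambda:\A\to B(\fH)\oplus\bM_r$ and $\rho:\B\to B(\fH)\oplus\bM_r$ together with an invertible $Z$ satisfying $\rho(\Phi(a))=Z\lambda(a)Z^{-1}$ for all $a\in\A$. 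Setting $\F=\lambda(\A)$ makes $\F$ completely isometrically isomorphic to $\A$, while applying the displayed identity and using surjectivity of $\Phi$ yields $Z\F Z^{-1}=\rho(\B)$, so that $\F$ is similar to $\rho(\B)$, which is in turn completely isometrically isomorphic to $\B\subset\bM_r$. Since $\rC^*(\B)$ is a $\rC^*$-subalgebra of the finite-dimensional algebra $\bM_r$, its quotient $\rC^*_e(\B)$ is finite-dimensional, and as the $\rC^*$-envelope is invariant under completely isometric isomorphism we get $\rC^*_e(\rho(\B))\cong\rC^*_e(\B)$ finite-dimensional, as required.

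For the reverse direction, I would suppose $\F$ is completely isometrically isomorphic to $\A$ and similar, via an invertible $S$, to an operator algebra $\G$ with $\dim\rC^*_e(\G)<\infty$. The defining map $\eps:\G\to\rC^*_e(\G)$ is a unital complete isometry, hence an injective linear map into a finite-dimensional space, so $\dim\G\le\dim\rC^*_e(\G)<\infty$. Because $a\mapsto SaS^{-1}$ is a linear isomorphism, $\dim\F=\dim\G<\infty$, and the completely isometric isomorphism $\A\cong\F$ then forces $\A$ to be finite-dimensional.

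The crux of the argument is the forward direction, and specifically the use of Theorem \ref{T:rigidity} in place of the classical one-sided Paulsen similarity theorem: I need a \emph{single} algebra $\F$ that is simultaneously a completely isometric copy of $\A$ and similar to a matrix-embeddable algebra, and it is exactly this two-sided feature that the rigidity theorem supplies. Beyond that, I expect no real difficulty apart from correctly tracking the two Hilbert-space summands and verifying the identity $Z\F Z^{-1}=\rho(\B)$; the remaining steps are bookkeeping with the complete-isometry invariance of the $\rC^*$-envelope.
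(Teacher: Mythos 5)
Your proof is correct and takes essentially the same route as the paper's: the same combination of Proposition \ref{P:fdimoacb} and Theorem \ref{T:rigidity}, producing the same algebra $\F=\lambda(\A)=Z^{-1}\rho(\B)Z$ and invoking invariance of the $\rC^*$-envelope under unital completely isometric isomorphisms. The only difference is cosmetic: you spell out the reverse implication (via injectivity of the embedding $\G\to\rC^*_e(\G)$), which the paper dismisses as clear.
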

\begin{proof}
It is clear that $\A$ is finite-dimensional if there exists an algebra $\F$ with the announced properties. Conversely, assume that $\A$ is finite-dimensional. By Proposition \ref{P:fdimoacb}, there is a positive integer $r\geq 1$, a unital subalgebra $\B\subset \bM_r$ and a unital completely bounded isomorphism $\Phi:\A\to \B$. Next, apply Theorem \ref{T:rigidity} to the map $\Phi$ and find two unital completely isometric homomorphisms 
\[
\lambda:\A\to B(\fH)\oplus \bM_r, \quad \rho:\B\to B(\fH)\oplus\bM_r
\]
along with an invertible operator $Z\in B(\fH)\oplus \bM_r$ with the property that
\[
\Phi(a)=\rho^{-1}(Z\lambda(a)Z^{-1}), \quad a\in \A.
\]
We note that $\B\subset \bM_r$, so that $\rC^*_e(\B)$ is finite-dimensional. Since $\rho$ is a unital completely isometric homomorphism, we see that $\rC^*_e(\rho(\B))\cong \rC^*_e(\B)$ is finite-dimensional as well. Finally, we put $\F=Z^{-1}\rho(\B)Z$ and note that
\[
\F=Z^{-1}\rho(\B)Z=Z^{-1}\rho(\Phi(\A))Z=\lambda(\A)
\]
so that indeed $\F$ is completely isometrically isomorphic to $\A$.
\end{proof}

\subsection{Residual finite dimensionality}

Next, we proceed to show that finite-\\dimensional operator algebras are RFD. Notice that in view of Example \ref{E:fdoa}, this is not immediate unlike in the self-adjoint setting. In fact, we obtain more precise information.

\begin{theorem}\label{T:fdimRFDnorm}
Let $\A\subset B(\fH)$ be a finite-dimensional operator algebra, let $d\in \bN$ and let $A\in \bM_d(\A)$. Then, there is a finite-dimensional Hilbert space $\fF$ and a completely contractive homomorphism $\pi:\A\to B(\fF)$ with the property that $\|\pi^{(d)}(A)\|=\|A\|$.
\end{theorem}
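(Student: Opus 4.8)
The plan is to combine an exact norm-attainment argument, carried out inside a $\rC^*$-cover of $\A$, with a finite-dimensional reduction that exploits the finite-dimensionality of $\A$ directly. The subtle point is that the norm $\|A\|$, computed as a supremum of $\|A\xi\|$ over unit vectors $\xi\in\fH^{(d)}$, need not be \emph{attained} on the given (possibly infinite-dimensional) space $\fH$; and a naive compression to a finite-dimensional subspace fails to be multiplicative. Both difficulties are resolved as follows.

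First I would arrange exact attainment. View $\A$ completely isometrically inside the $\rC^*$-algebra $\fA=\rC^*(\A)$ (unitized if necessary), so that $A\in\bM_d(\fA)$ has $\rC^*$-norm $\|A\|$. Since $A^*A$ is a positive element of the $\rC^*$-algebra $\bM_d(\fA)$, its norm $\|A^*A\|=\|A\|^2$ is attained by a state: there is a state $\omega$ on $\bM_d(\fA)$ with $\omega(A^*A)=\|A\|^2$ (evaluate at the top of the spectrum of $A^*A$ in the commutative $\rC^*$-algebra it generates, then extend to $\bM_d(\fA)$). Passing to the GNS representation $(\tau,\mathcal{G},\Omega)$ of $\bM_d(\fA)$ associated with $\omega$, the cyclic unit vector $\Omega$ satisfies $\|\tau(A)\Omega\|^2=\omega(A^*A)=\|A\|^2$. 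Using the standard identification of nondegenerate representations of $\bM_d(\fA)\cong\bM_d\otimes\fA$, I would write $\tau\cong\mathrm{id}_{\bM_d}\otimes\sigma$ on $\mathcal{G}\cong\fF_0^{(d)}$ for some representation $\sigma$ of $\fA$ on a Hilbert space $\fF_0$. Setting $\pi_0=\sigma|_{\A}$, a completely contractive homomorphism $\A\to B(\fF_0)$, and $\xi_0=\Omega\in\fF_0^{(d)}$, we obtain a unit vector with $\|\pi_0^{(d)}(A)\xi_0\|=\|A\|$.

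It remains to pass to a finite-dimensional space, and here is where finite-dimensionality of $\A$ enters. Let $\eta_0=\pi_0^{(d)}(A)\xi_0$, so $\|\eta_0\|=\|A\|$, and write $\xi_0=(\xi_0^{(1)},\ldots,\xi_0^{(d)})$. I would set
\[
\fM=\spn\{\pi_0(a)\xi_0^{(i)}:a\in\A,\ 1\leq i\leq d\}+\spn\{\xi_0^{(i)}:1\leq i\leq d\}\subseteq\fF_0.
\]
Because $\A$ is finite-dimensional, each cyclic subspace $\{\pi_0(a)\xi_0^{(i)}:a\in\A\}$ has dimension at most $\dim\A$, so $\fM$ is finite-dimensional. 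Moreover $\fM$ is invariant for $\pi_0(\A)$, since $\pi_0(b)\pi_0(a)\xi_0^{(i)}=\pi_0(ba)\xi_0^{(i)}\in\fM$. Consequently the restriction $\rho(a):=\pi_0(a)|_{\fM}$ defines a genuine homomorphism $\rho:\A\to B(\fM)$, and it is completely contractive because the restriction of a completely contractive map to an invariant subspace is again completely contractive. Finally, each component of $\xi_0$ and, from the definition of $\fM$, each component $\eta_0^{(j)}=\sum_i\pi_0(A_{ji})\xi_0^{(i)}$ of $\eta_0$ lies in $\fM$; hence $\xi_0,\eta_0\in\fM^{(d)}$ and $\rho^{(d)}(A)\xi_0=\eta_0$. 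Therefore $\|\rho^{(d)}(A)\|\geq\|\eta_0\|=\|A\|$, while complete contractivity gives the reverse inequality. Taking $\fF=\fM$ and $\pi=\rho$ completes the argument.

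The conceptual heart of the proof is the observation that, for a finite-dimensional $\A$, the invariant subspace generated by finitely many vectors is automatically finite-dimensional, which lets us \emph{restrict} (rather than compress) $\pi_0$ and thereby preserve both multiplicativity and the norm. The only genuine obstacle is the preliminary step of securing \emph{exact} attainment of $\|A\|$ on a unit vector, which the supremum defining the operator norm does not provide directly; this is exactly what the detour through a norming state on $\bM_d(\rC^*(\A))$ and its GNS representation supplies. Note that this strategy sidesteps the structural difficulty highlighted by Example \ref{E:fdoa}, as it never attempts to embed $\A$ completely isometrically into a matrix algebra.
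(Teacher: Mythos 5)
Your proof is correct and is essentially the paper's own argument: the paper likewise picks a state on $\bM_d(\rC^*(\A)+\bC I_\fH)$ norming $A^*A$, passes to the GNS representation, identifies it as an ampliation $\tau^{(d)}$ of a representation $\tau$ of the $\rC^*$-algebra, and then restricts $\tau$ to the finite-dimensional $\tau(\A)$-invariant subspace spanned by the components of the cyclic vector and their $\tau(\A)$-orbits. The only cosmetic difference is that you phrase the ampliation step via the identification $\bM_d(\fA)\cong\bM_d\otimes\fA$ rather than citing it as a known structure result.
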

\begin{proof}
Let $\fA=\rC^*(\A)+\bC I_\fH\subset B(\fH)$. There is a state $\psi$ of $\bM_d(\fA)$ with the property that $\psi(A^*A)=\|A\|^2$. Let $\sigma_\psi:\bM_d(\fA)\to B(\fH_\psi)$ be the associated GNS representation, with unit cyclic vector $\xi_\psi$. Then
\[
\|\sigma_\psi(A)\xi_\psi\|^2=\langle \sigma_\psi(A^*A)\xi_\psi,\xi_\psi \rangle=\psi(A^*A)=\|A\|^2.
\]
Now, it is well known (see for instance \cite{hopenwasser1973}) that there is a Hilbert space $\fH'$, a unitary operator $U:\fH_\psi\to \fH'^{(d)}$ and a unital $*$-homomorphism $\tau: \fA\to B(\fH')$ such that
\[
U\sigma_\psi(B)U^*=\tau^{(d)}(B), \quad B\in \bM_d(\fA).
\]
Write $U\xi_\psi=\xi_1\oplus \ldots\oplus \xi_d$ for some $\xi_1,\ldots,\xi_d\in \fH'$. Let $\fF\subset \fH'$ be the subspace spanned by $\xi_1,\ldots,\xi_d$ and $\tau(\A)\xi_1,\ldots, \tau(\A)\xi_d$. Then, $\fF$ is clearly invariant for $\tau(\A)$ and finite-dimensional. The associated restriction $\pi:\A\to B(\fF)$ defined as
\[
\pi(b)=\tau(b)|_{\fF}, \quad b\in \A
\]
is a completely contractive homomorphism that satisfies
\[
\|\pi^{(d)}(A)\|\geq \|\tau^{(d)}(A)U\xi_\psi\|=\|U\sigma_\psi(A)\xi_\psi\|=\|A\|.
\]
\end{proof}

We now obtain the announced residual finite-dimensionality result.

\begin{corollary}\label{C:fdimRFD}
Finite-dimensional operator algebras are RFD.
\end{corollary}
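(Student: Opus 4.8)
The plan is to derive Corollary \ref{C:fdimRFD} directly from Theorem \ref{T:fdimRFDnorm} by assembling, for each matrix level and each matrix entry, the finite-dimensional completely contractive representations produced there into a single completely isometric homomorphism into a product of matrix algebras. The recollection here is that an operator algebra $\A$ is RFD precisely when it embeds completely isometrically into a product $\prod_\lambda \bM_{r_\lambda}$, so it suffices to manufacture a separating family of completely contractive finite-dimensional matrix representations that is \emph{completely} isometric.

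First I would fix a finite-dimensional operator algebra $\A \subset B(\fH)$, and for each $d \in \bN$ and each $A \in \bM_d(\A)$ invoke Theorem \ref{T:fdimRFDnorm} to obtain a finite-dimensional Hilbert space $\fF_{d,A}$ and a completely contractive homomorphism $\pi_{d,A}:\A \to B(\fF_{d,A})$ satisfying $\|\pi_{d,A}^{(d)}(A)\| = \|A\|$. Since $\fF_{d,A}$ is finite-dimensional, $B(\fF_{d,A}) \cong \bM_{r_{d,A}}$ for some positive integer $r_{d,A}$, so each $\pi_{d,A}$ is a completely contractive homomorphism into a matrix algebra. I would then index the whole family by $\Lambda = \{(d,A): d \in \bN,\ A \in \bM_d(\A)\}$ and form the direct product map $\rho = \prod_{(d,A)} \pi_{d,A}:\A \to \prod_{(d,A)} \bM_{r_{d,A}}$.

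The remaining point is to check that $\rho$ is completely isometric. Each coordinate $\pi_{d,A}$ is completely contractive, so $\rho$ is completely contractive, and it remains to verify that $\|\rho^{(d)}(A)\| \geq \|A\|$ for every $d$ and every $A \in \bM_d(\A)$. But this is immediate: the norm on the product dominates each coordinate norm, so $\|\rho^{(d)}(A)\| \geq \|\pi_{d,A}^{(d)}(A)\| = \|A\|$ by the defining property of the representation indexed by $(d,A)$. Hence $\|\rho^{(d)}(A)\| = \|A\|$ for all $d$ and $A$, which is exactly the statement that $\rho$ is completely isometric. Since $\rho$ is then a completely isometric homomorphism of $\A$ into a product of matrix algebras, $\A$ is RFD.

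I do not expect any genuine obstacle here, as the corollary is a packaging of Theorem \ref{T:fdimRFDnorm} together with the definition of RFD; the only mild subtlety is the bookkeeping that one must range over \emph{all} matrix levels $d$ and \emph{all} matrices $A$ (not merely over $\A$ itself) so that the resulting family separates norms at every ampliation, thereby yielding complete isometry rather than mere isometry.
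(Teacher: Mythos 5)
Your proposal is correct and is exactly the argument the paper intends: the paper's proof of Corollary \ref{C:fdimRFD} is the single line ``this follows immediately from Theorem \ref{T:fdimRFDnorm},'' and your write-up simply makes explicit the standard bookkeeping (indexing the representations by pairs $(d,A)$, forming the product map, and noting that the product norm dominates each coordinate) that the authors leave to the reader. No gaps; the emphasis on ranging over all matrix levels $d$ to get \emph{complete} isometry is precisely the right point to flag.
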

\begin{proof}
This follows immediately from Theorem \ref{T:fdimRFDnorm}.
\end{proof}

In \cite{courtney2017}, the authors explore the connection between the residual finite-dimensionality  of a $\rC^*$-algebra and the abundance of elements in it that attain their norms in finite-dimensional representations. Beyond Theorem \ref{T:fdimRFDnorm}, we do not know whether analogous results hold in the non-selfadjoint context.

We close this section by refining Corollary \ref{C:fdimRFD}.

\begin{theorem}\label{T:RFDbimodule}
Let $\A\subset B(\fH)$ be a finite-dimensional operator algebra. Then, there is a set of positive integers $\{r_\lambda:\lambda\in \Lambda\}$ and a unital completely isometric map
\[
\Psi:B(\fH)\to \prod_{\lambda\in \Lambda}\bM_{r_\lambda}
\]
such that
\[
\Psi(a^* t b)=\Psi(a)^* \Psi(t) \Psi(b)
\]
for every $a,b\in \A$ and $t\in B(\fH)$. Moreover, $\Psi$ restricts to a homomorphism on $\A$.
\end{theorem}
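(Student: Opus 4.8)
The plan is to build $\Psi$ as a product of finite-dimensional compressions, one for each operator in $\bigcup_{n}\bM_n(B(\fH))$, closely following the proof of Theorem \ref{T:fdimRFDnorm} but feeding in arbitrary operators from $B(\fH)$ rather than elements of $\A$. The one genuinely new ingredient will be to perform each compression along a finite-dimensional subspace that is \emph{invariant} for the ambient representation of $\A$; as I explain below, this single requirement is exactly what simultaneously yields the homomorphism property on $\A$ and the full bimodule identity.

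Fix $n\in\bN$ and $T\in\bM_n(B(\fH))$. Just as in Theorem \ref{T:fdimRFDnorm}, I would choose a state $\psi$ of $\bM_n(B(\fH))$ with $\psi(T^*T)=\|T\|^2$, pass to its GNS representation $\sigma_\psi$ with cyclic unit vector $\xi_\psi$ (so that $\|\sigma_\psi(T)\xi_\psi\|=\|T\|$), and use the standard structure of representations of $\bM_n(B(\fH))$ to find a unital $*$-homomorphism $\tau:B(\fH)\to B(\fH')$ and a unitary $U$ with $U\sigma_\psi(S)U^*=\tau^{(n)}(S)$ for all $S$, writing $U\xi_\psi=\xi_1\oplus\cdots\oplus\xi_n$. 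The point of departure from the earlier proof is the choice of subspace: I set $\fF_0=\spn\{\xi_i\}+\spn\{\tau(T_{ij})\xi_j\}$ and then enlarge it to
\[
\fF=\fF_0+\spn\{\tau(a)v:a\in\A,\ v\in\fF_0\}.
\]
Since $\A$ is finite-dimensional, $\tau(\A)$ is a finite-dimensional space of operators and $\fF$ is finite-dimensional; since $\A$ is an algebra, $\tau(a)\tau(a')=\tau(aa')$ keeps $\fF$ invariant for $\tau(\A)$. Writing $P$ for the projection onto $\fF$, the compression $\pi_T(t)=P\tau(t)|_\fF$ is a unital completely contractive map into $B(\fF)$. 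Because $\xi_i$ and $\tau(T_{ij})\xi_j$ already lie in $\fF$, the compression disturbs neither $U\xi_\psi$ nor $\tau^{(n)}(T)U\xi_\psi$, so that $\|\pi_T^{(n)}(T)\|\geq\|\tau^{(n)}(T)U\xi_\psi\|=\|T\|$, giving equality.

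The crux is to read off the bimodule identity from invariance. For $a,b\in\A$, invariance gives $P^\perp\tau(a)P=0$ and $\tau(b)P=P\tau(b)P$, so that $\tau$ being a $*$-homomorphism yields, for every $t\in B(\fH)$,
\[
P\tau(a^*tb)P=P\tau(a)^*\tau(t)\tau(b)P=P\tau(a)^*P\,\tau(t)\,P\tau(b)P,
\]
which is exactly $\pi_T(a^*tb)=\pi_T(a)^*\pi_T(t)\pi_T(b)$; the same invariance makes $\pi_T|_\A$ multiplicative. I would then set $\Psi=\prod_{(n,T)}\pi_T$, a unital completely contractive map of $B(\fH)$ into a product of matrix algebras. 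Each coordinate satisfies the bimodule identity and restricts to a homomorphism on $\A$, so $\Psi$ inherits both; and for each $T\in\bM_n(B(\fH))$ the coordinate indexed by $T$ already forces $\|\Psi^{(n)}(T)\|\geq\|\pi_T^{(n)}(T)\|=\|T\|$, so $\Psi$ is completely isometric.

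I expect the main obstacle to be conceptual rather than computational: recognizing that the compression subspace must be enlarged to a $\tau(\A)$-invariant one — which is possible precisely because $\A$ is finite-dimensional — and that this invariance alone is strong enough to force the bimodule identity $\Psi(a^*tb)=\Psi(a)^*\Psi(t)\Psi(b)$, via the vanishing of $P^\perp\tau(a)P$ for all $a\in\A$.
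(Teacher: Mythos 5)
Your proof is correct. The central mechanism is the same as the paper's -- compressing a representation of $B(\fH)$ onto a finite-dimensional subspace that is invariant for (the image of) $\A$, with invariance alone forcing both the multiplicativity on $\A$ and the bimodule identity via $P^\perp\tau(a)P=0$ -- but your construction of the coordinates is genuinely different. The paper never leaves $\fH$: it compresses the \emph{identity} representation along the subspaces $\X_\Xi=\spn(\Xi\cup\A\Xi)$, indexed by all finite sets of vectors $\Xi\subset\fH$, and recovers complete isometry by a supremum argument, choosing for each $T\in\bM_d(B(\fH))$ and each unit vector $\zeta$ the set $\Xi=\{\zeta_j\}\cup\{t_{ij}\zeta_j\}$ so that $\|\rho_\Xi^{(d)}(T)\|\geq\|T\zeta\|$. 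You instead index the coordinates by the matrices $T$ themselves and, for each one, run the GNS-plus-ampliation argument of Theorem \ref{T:fdimRFDnorm} (norm-attaining state on $\bM_n(B(\fH))$, structure of representations of matrix algebras) before compressing onto the $\tau(\A)$-invariant enlargement of the relevant finite set of vectors. What your route buys is exact norm attainment of each matrix $T$ in a single finite-dimensional coordinate, in the spirit of \cite{courtney2017}; what the paper's route buys is economy -- no states, no GNS, no structure theorem, everything happening inside $\fH$ -- at the harmless cost of attaining norms only in the supremum over coordinates. Both are complete proofs; yours is essentially a hybrid of the paper's Theorem \ref{T:fdimRFDnorm} and its invariant-subspace compression trick (Lemma \ref{L:fdiminvsub}).
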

\begin{proof}
Let $\Xi\subset \fH$ be a finite set of vectors. Let $\X_\Xi\subset \fH$ be the subspace spanned by $\xi$ and $\A \xi$ for every $\xi\in \Xi$. Then, $\X_\Xi$ is finite-dimensional and we  put $n_\Xi=\dim \X_\xi$. Upon identifying $B(\X_{\Xi})$ with $\bM_{n_{\Xi}}$, we may define a map
\[
\rho_{\Xi}:B(\fH)\to \bM_{n_{\Xi}}
\]
as
\[
\rho_{\Xi}(t)=P_{\X_{\Xi}}t|_{\X_{\Xi}}, \quad t\in B(\fH).
\]
It is immediate that $\rho_{\Xi}$ is unital and completely contractive, and in particular it is self-adjoint. Furthermore, it is clear from its definition that the subspace $\X_{\Xi}$ is invariant for $\A$. Thus, we have that $\rho_\Xi$ restricts to a homomorphism on $\A$ and
\begin{align*}
\rho_{\Xi}(a^* tb)&=P_{\X_{\Xi}}a^*tb|_{\X_{\Xi}}=P_{\X_{\Xi}}a^*P_{\X_{\Xi}}tP_{\X_{\Xi}}b|_{\X_{\Xi}}\\
&=\rho_{\Xi}(a)^*\rho_{\Xi}(t)\rho_{\Xi}(b)
\end{align*}
for every $a,b\in \A$ and $t\in B(\fH)$. Define now $\Psi=\oplus_{\Xi}\: \rho_{\Xi}$,
where the direct sum extends over all finite subsets of vectors $\Xi\subset \fH$. Clearly, $\Psi$ is a unital completely contractive (and thus self-adjoint) map that restricts to a homomorphism on $\A$. Moreover
\[
\Psi(a^* t b)=\Psi(a)^* \Psi(t) \Psi(b)
\]
for every $a,b\in \A$ and $t\in B(\fH)$. It remains to show that it is completely isometric. 
To see this, fix $T=[t_{ij}]_{i,j}\in \bM_d(B(\fH))$. Let $\zeta=\zeta_1\oplus \ldots\oplus \zeta_d\in \fH^{(d)}$ be a unit vector and let
\[
\Xi=\{\zeta_j:1\leq j\leq d\}\cup \{t_{ij}\zeta_j:1\leq i,j\leq d\}\subset \fH.
\]
We have $\zeta\in\X_{\Xi}^{(d)}$ and
\[
T \zeta=\left( \sum_{j=1}^d t_{1,j}\zeta_j\right)\oplus \ldots\oplus\left( \sum_{j=1}^d t_{d,j}\zeta_j\right)\in \X_{\Xi}^{(d)}.
\]
Now, we observe that
\[
\rho_{\Xi}^{(d)}(T)=P_{ \X_{\Xi}^{(d)}}T|_{ \X_{\Xi}^{(d)}}
\]
whence
\[
\|\Psi^{(d)}(T)\|\geq  \|P_{ \X_{\Xi}^{(d)}}T|_{ \X_{\Xi}^{(d)}}\|\geq \|T\zeta\|.
\]
Since $\zeta\in \fH^{(d)}$ is an arbitrary unit vector, we obtain that $\|\Psi^{(d)}(T)\|\geq \|T\|$ so that indeed $\Psi$ is completely isometric.
\end{proof}

We obtain a curious consequence, which is likely known. It is reminiscent of \cite[Corollary 8]{choi1980}. Recall that an operator $t\in B(\fH)$ is said to be \emph{hyponormal} if $tt^*\leq t^*t$. 

\begin{corollary}\label{C:hyponormal}
Let $\A\subset B(\fH)$ be a finite-dimensional  operator algebra. Then, every hyponormal element of $\A$ is normal.
\end{corollary}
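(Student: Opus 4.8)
The plan is to transport the problem into the product of matrix algebras furnished by Theorem \ref{T:RFDbimodule}, where hyponormality forces normality for dimensional reasons, and then to pull the conclusion back via the injectivity of the transporting map. Recall that the map $\Psi:B(\fH)\to \prod_{\lambda\in\Lambda}\bM_{r_\lambda}$ of Theorem \ref{T:RFDbimodule} is unital and completely isometric; being a unital completely contractive map on the $\rC^*$-algebra $B(\fH)$, it is in fact completely positive, and thus self-adjoint, so that $\Psi(x^*)=\Psi(x)^*$ for every $x\in B(\fH)$. Fix a hyponormal element $t\in\A$, so that $t^*t\geq tt^*$.

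The crux is to establish the chain
\[
\Psi(t)\Psi(t)^* \leq \Psi(tt^*) \leq \Psi(t^*t) = \Psi(t)^*\Psi(t).
\]
The right-hand equality is immediate from the bimodule identity of Theorem \ref{T:RFDbimodule} applied with $a=b=t$ and middle term $I$, using that $\Psi$ is unital. The middle inequality follows from the positivity of $\Psi$ together with hyponormality, since $t^*t-tt^*\geq 0$. The left-hand inequality is the Kadison--Schwarz inequality for the completely positive map $\Psi$ applied to the element $t^*$, combined with the identity $\Psi(t^*)=\Psi(t)^*$. Consequently $\Psi(t)\Psi(t)^*\leq \Psi(t)^*\Psi(t)$; that is, $\Psi(t)$ is hyponormal in $\prod_{\lambda}\bM_{r_\lambda}$, so each coordinate $\Psi_\lambda(t)\in\bM_{r_\lambda}$ is a hyponormal matrix.

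Next I would invoke the elementary finite-dimensional fact that a hyponormal matrix is normal: if $M^*M-MM^*\geq 0$, then this positive matrix has vanishing trace because $\tr(M^*M)=\tr(MM^*)$, forcing $M^*M=MM^*$. Hence every $\Psi_\lambda(t)$ is normal, whence $\Psi(t)^*\Psi(t)=\Psi(t)\Psi(t)^*$ in the product. Feeding this equality back into the displayed chain collapses it to $\Psi(tt^*)=\Psi(t^*t)$, and the injectivity of $\Psi$ (it is isometric) yields $tt^*=t^*t$, i.e.\ $t$ is normal, as desired.

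I expect the only genuine subtlety to be the treatment of $\Psi(tt^*)$. Because Theorem \ref{T:RFDbimodule} provides a \emph{two-sided} module identity rather than one-sided multiplicativity, and since $t^*\notin\A$ in general, one cannot simply factor $\Psi(tt^*)=\Psi(t)\Psi(t)^*$ the way one factors $\Psi(t^*t)=\Psi(t)^*\Psi(t)$. The Kadison--Schwarz inequality is precisely what bridges this gap, supplying the one inequality that, together with the exact identity for $\Psi(t^*t)$, sandwiches $\Psi(tt^*)$ and ultimately forces equality. Verifying that $\Psi$ is completely positive, so that Schwarz applies, is routine, as $\Psi$ is realized in the proof of Theorem \ref{T:RFDbimodule} as a direct sum of compressions.
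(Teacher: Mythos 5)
Your proof is correct and follows essentially the same route as the paper's: both invoke Theorem \ref{T:RFDbimodule}, sandwich $\Psi(tt^*)$ between $\Psi(t)\Psi(t)^*$ (via the Schwarz inequality) and $\Psi(t)^*\Psi(t)$ (via the bimodule identity), deduce normality coordinatewise from the trace argument, and conclude by injectivity of $\Psi$. No gaps; this matches the published argument.
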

\begin{proof}
Let $a\in \A$ be hyponormal, so that $aa^*\leq a^*a$. By Theorem \ref{T:RFDbimodule}
there is a set of positive integers $\{r_\lambda:\lambda\in \Lambda\}$ and a unital completely isometric map
\[
\Psi:B(\fH)\to \prod_{\lambda\in \Lambda}\bM_{r_\lambda}
\]
such that
\[
\Psi(a^* t b)=\Psi(a)^* \Psi(t) \Psi(b)
\]
for every $a,b\in \A$ and $t\in B(\fH)$. It suffices to show that $\Psi(a^*a)=\Psi(aa^*)$. Since $\Psi$ must necessarily be completely positive, we may invoke the Schwarz inequality to find
\begin{align*}
\Psi(a)\Psi(a)^*\leq \Psi(aa^*)\leq \Psi(a^*a)=\Psi(a)^*\Psi(a).
\end{align*}
Write $\Psi(a)=(b_\lambda)_{\lambda\in \Lambda}$ where $b_\lambda\in \bM_{r_\lambda}$ for each $\lambda\in \Lambda$. Note then that
\[
b_\lambda b_\lambda^*\leq b_\lambda^* b_\lambda, \quad \lambda\in \Lambda.
\]
This implies that $b_\lambda^*b_\lambda - b_\lambda b_\lambda^*$ is a non-negative matrix with zero trace, whence $b_\lambda^*b_\lambda= b_\lambda b_\lambda^*$ for every $\lambda\in \Lambda$. In turn, this means that $\Psi(a)\Psi(a)^*= \Psi(a)^*\Psi(a).$ Thus
\[
\Psi(a)\Psi(a)^*\leq\Psi(aa^*)\leq  \Psi(a^*a)=\Psi(a)^*\Psi(a)=\Psi(a)\Psi(a)^*.
\]
These inequalities force $\Psi(aa^*)= \Psi(a^*a)$ and the proof is complete.
\end{proof}

\section{Residually finite-dimensional operator algebras}\label{S:RFD}

In the previous section, we investigated finite-dimensional operator algebras, and showed among other things that they are RFD (Corollary \ref{C:fdimRFD}). In this section, we study general RFD operator algebras. The first order of business is to obtain a more flexible characterization of residual finite-dimensionality. We emphasize one more time that, as seen in Example \ref{E:fdoa}, finite-dimensional operator algebras are not necessarily completely isometrically embeddable  in a finite-dimensional $\rC^*$-algebra, so the next fact is not obvious at first glance.

\begin{theorem}\label{T:fdimvsfdim}
Let $\A$ be an operator algebra. Consider the following statements.
\begin{enumerate}

\item[\rm{(i)}] There is a collection $\{\B_\lambda\}_{\lambda\in \Lambda}$ of finite-dimensional operator algebras and a completely isometric homomorphism $\rho:\A\to \prod_{\lambda\in \Lambda}\B_\lambda$.

\item[\rm{(ii)}] There is a collection $\{\fB_\lambda\}_{\lambda\in \Lambda}$ of finite-dimensional $\rC^*$-algebras and a completely isometric homomorphism $\rho:\A\to \prod_{\lambda\in \Lambda}\fB_\lambda$.

\item[\rm{(iii)}] For every $d\in\bN$ and every $A\in \bM_d(\A)$, there is a finite-dimensional operator algebra $\B$ and a completely contractive homomorphism  $\pi:\A\to\B$ such that $\|\pi^{(d)}(A)\|=\|A\|$.

\item[\rm{(iv)}] For every $d\in\bN$ and every $A\in \bM_d(\A)$, there is a finite-dimensional $\rC^*$-algebra $\fB$ and a completely contractive homomorphism  $\pi:\A\to\fB$ such that $\|\pi^{(d)}(A)\|=\|A\|$.

\end{enumerate}

Then, \emph{(i)} and \emph{(ii)} are equivalent, and \emph{(iii)} and \emph{(iv)} are equivalent.
\end{theorem}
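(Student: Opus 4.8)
The two implications (ii)$\Rightarrow$(i) and (iv)$\Rightarrow$(iii) are immediate, since every finite-dimensional $\rC^*$-algebra is in particular a finite-dimensional operator algebra; thus the substance of the statement lies in the reverse implications (i)$\Rightarrow$(ii) and (iii)$\Rightarrow$(iv). In both cases the task is to upgrade an approximation by finite-dimensional \emph{operator algebras} to one by finite-dimensional \emph{$\rC^*$-algebras}, and this is precisely what the results of Section \ref{S:structure} allow: Corollary \ref{C:fdimRFD} says that each finite-dimensional operator algebra is itself RFD, while Theorem \ref{T:fdimRFDnorm} furnishes a single matricial approximation attaining a prescribed matrix norm.

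For (i)$\Rightarrow$(ii), I would start from the completely isometric homomorphism $\rho:\A\to\prod_{\lambda\in\Lambda}\B_\lambda$ and apply Corollary \ref{C:fdimRFD} to each factor, producing for every $\lambda$ a set of positive integers $\{s_{\lambda,\mu}:\mu\in M_\lambda\}$ and a completely isometric homomorphism $\sigma_\lambda:\B_\lambda\to\prod_{\mu\in M_\lambda}\bM_{s_{\lambda,\mu}}$. Forming $\sigma=\prod_{\lambda}\sigma_\lambda$ and composing with $\rho$ yields a homomorphism from $\A$ into $\prod_{\lambda}\prod_{\mu\in M_\lambda}\bM_{s_{\lambda,\mu}}$, which after reindexing over the disjoint union $\bigsqcup_\lambda M_\lambda$ is a product of matrix algebras, hence of finite-dimensional $\rC^*$-algebras. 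The only point needing verification is that $\sigma\circ\rho$ is again completely isometric; since the norm on a direct product of operator algebras at each matrix level is the supremum of the norms of the coordinates, the product $\sigma$ of completely isometric maps is completely isometric, and composing with the completely isometric $\rho$ preserves this.

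For (iii)$\Rightarrow$(iv), I would fix $d\in\bN$ and $A\in\bM_d(\A)$, and use (iii) to obtain a finite-dimensional operator algebra $\B$ and a completely contractive homomorphism $\pi:\A\to\B$ with $\|\pi^{(d)}(A)\|=\|A\|$. Viewing $\pi^{(d)}(A)$ as an element of $\bM_d(\B)$, I would then apply Theorem \ref{T:fdimRFDnorm} to the finite-dimensional operator algebra $\B$ to get a finite-dimensional Hilbert space $\fF$ and a completely contractive homomorphism $\pi':\B\to B(\fF)$ with $\|(\pi')^{(d)}(\pi^{(d)}(A))\|=\|\pi^{(d)}(A)\|=\|A\|$. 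Setting $\fB=B(\fF)\cong\bM_{\dim\fF}$, a finite-dimensional $\rC^*$-algebra, the composition $\pi'\circ\pi:\A\to\fB$ is a completely contractive homomorphism satisfying $\|(\pi'\circ\pi)^{(d)}(A)\|=\|A\|$, which is precisely (iv).

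The conceptual heart of the theorem is not in these bookkeeping steps but in the inputs from Section \ref{S:structure}: Example \ref{E:fdoa} shows that a single finite-dimensional operator algebra need not embed completely isometrically into any finite-dimensional $\rC^*$-algebra, so the passage from operator algebras to $\rC^*$-algebras is genuinely nontrivial, and is made possible only because one is permitted \emph{infinite} products (in (ii)) or allowed to let the approximating algebra depend on $A$ (in (iv)). I expect no serious obstacle beyond correctly tracking that complete isometry is preserved under products and under the compositions above; the real work has already been carried out in establishing Corollary \ref{C:fdimRFD} and Theorem \ref{T:fdimRFDnorm}.
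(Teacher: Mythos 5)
Your proposal is correct and follows essentially the same route as the paper: both implications (i)$\Rightarrow$(ii) and (iii)$\Rightarrow$(iv) are obtained exactly as in the paper's proof, by applying Corollary \ref{C:fdimRFD} factorwise and composing with $\rho$, and by applying Theorem \ref{T:fdimRFDnorm} to $\pi^{(d)}(A)$ and composing $\pi'\circ\pi$, respectively. Your added remark that complete isometry is preserved under products (since matrix norms on a product are coordinatewise suprema) is the same point the paper leaves implicit.
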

\begin{proof}
It is trivial that (ii) implies (i) and that (iv) implies (iii).

Assume that (i) holds. For each $\lambda \in \Lambda$, we may apply Corollary \ref{C:fdimRFD} to find a set of positive integers $\{r_\mu:\mu\in \Omega_\lambda\}$ and a completely isometric homomorphism 
\[
\pi_\lambda:\B_\lambda\to \prod_{\mu\in \Omega_\lambda}\bM_{r_\mu}.
\]
The map 
\[
(\oplus_{\lambda\in \Lambda}\pi_\lambda)\circ \rho:\A\to \prod_{\lambda\in\Lambda}\prod_{\mu\in \Omega_\lambda}\bM_{r_\mu}
\]
is a completely isometric homomorphism, and thus  (ii) follows.

Finally, assume that (iii) holds. Let $d\in \bN$ and $A\in \bM_d(\A)$. Choose a finite-dimensional  operator algebra $\B$ and a completely contractive homomorphism $\pi:\A\to\B$ such that $\|\pi^{(d)}(A)\|=\|A\|$. Apply now Theorem \ref{T:fdimRFDnorm} to find a finite-dimensional Hilbert space $\fF$ and a completely contractive homomorphism $\sigma:\B\to B(\fF)$ such that $\|\sigma^{(d)}(\pi^{(d)}(A))\|=\|\pi^{(d)}(A)\|$. Thus, $\sigma\circ\pi:\A\to B(\fF)$ is a completely contractive homomorphism with $\|(\sigma\circ\pi)^{(d)}(A)\|=\|A\|$, and $B(\fF)$ is a finite-dimensional $\rC^*$-algebra. We conclude that (iv) holds.
\end{proof}

The next development is inspired by \cite{courtney2017}. We aim to identify elements in an RFD operator algebra, the norm of which can be attained in a finite-dimensional representation. For simplicity, we restrict our attention to the separable case. Thus, we fix a sequence $(r_n)_n$ of positive integers, and for each $n\in \bN$ we let 
\[
\gamma_n:\prod_{m=1}^\infty \bM_{r_m}\to \bM_{r_n}
\]
denote the natural projection. Put 
\[
\fL=\oplus_{n=1}^\infty \bM_{r_n}=\left\{t\in \prod_{n=1}^\infty \bM_{r_n}:\lim_{n\to\infty}\|\gamma_n(t)\|=0\right\}.
\]
Before stating the result, we record a standard calculation.

\begin{lemma}\label{L:limsup}
Let $\fA\subset \prod_{n=1}^\infty \bM_{r_n}$ be a $\rC^*$-algebra and let 
\[
\kappa:\fA\to \fA/(\fA\cap \fL)
\]
 be the quotient map. Then, we have that
\[
\|\kappa^{(d)}(A)\|=\limsup_{n\to\infty}\|\gamma_n^{(d)}(A)\|
\]
for every $A\in \bM_d(\fA)$ and every $d\in \bN$.
 \end{lemma}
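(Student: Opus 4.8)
The plan is to compute the quotient norm directly as a distance and identify it with the limit superior. The key observation is that $\fA \cap \fL$ is exactly the set of elements of $\fA$ whose coordinate norms tend to zero, so intuitively passing to the quotient by $\fA \cap \fL$ should ``forget'' all but the asymptotic behaviour of the coordinates. I would first reduce to the scalar case $d=1$ by noting that $\bM_d(\fA)$ is itself a $\rC^*$-subalgebra of $\prod_{n=1}^\infty \bM_{dr_n}$ and that $\bM_d(\fA \cap \fL) = \bM_d(\fA) \cap \fL_d$, where $\fL_d = \oplus_{n=1}^\infty \bM_{dr_n}$; under these identifications $\gamma_n^{(d)}$ becomes the $n$-th coordinate projection on $\bM_d(\fA)$ and $\kappa^{(d)}$ becomes the quotient map by $\bM_d(\fA) \cap \fL_d$. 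So it suffices to prove, for any $\rC^*$-algebra $\fB \subset \prod_{n=1}^\infty \bM_{s_n}$ with coordinate projections $\delta_n$, that
\[
\|b + (\fB \cap \fL)\| = \limsup_{n\to\infty}\|\delta_n(b)\|, \quad b\in \fB.
\]

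For the upper bound, I would argue that for any $c \in \fB \cap \fL$ one has $\|b+c\| = \sup_n \|\delta_n(b) + \delta_n(c)\| \geq \limsup_n \|\delta_n(b)+\delta_n(c)\| = \limsup_n \|\delta_n(b)\|$, where the last equality uses $\|\delta_n(c)\|\to 0$ together with the reverse triangle inequality. Taking the infimum over $c$ gives $\|b+(\fB\cap\fL)\| \geq \limsup_n \|\delta_n(b)\|$. For the lower bound, set $L = \limsup_n \|\delta_n(b)\|$ and construct an explicit approximating element of $\fB \cap \fL$: the natural candidate is to truncate the tail of $b$, but one must stay inside $\fB$. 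The cleanest device is functional calculus on $(b^*b)^{1/2}$ — or more concretely, choose an increasing sequence $n_1 < n_2 < \cdots$ exhausting the indices where $\|\delta_n(b)\| > L$ and try to subtract off a correction supported there. Since $\|\delta_n(b)\| \to$ (values $\leq L$ eventually), only finitely many extra indices exceed $L+\ep$, and the ``finitely many coordinates'' piece lies in $\fL$; I would exhibit a $c \in \fB \cap \fL$ with $\|b + c\| \leq L + \ep$.

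The main obstacle is precisely this lower bound, because one is not free to modify individual coordinates of $b$ arbitrarily — the correction $c$ must itself belong to the $\rC^*$-algebra $\fB$, and $\fB$ need not contain the coordinate cutoffs of its elements. The clean way around this is to work with the ideal $\fB \cap \fL$ intrinsically: since $\fB/(\fB \cap \fL)$ is a $\rC^*$-algebra, its norm is $\|b + (\fB\cap\fL)\| = \inf_{c}\|b+c\|$, and I would instead compute this quotient norm via the standard fact that for a closed ideal $\J$ in a $\rC^*$-algebra, $\|b + \J\| = \lim_{u}\|b(1-u)\|$ along an approximate unit $(u)$ of $\J$. Choosing a quasicentral approximate unit $(u_k)$ for $\fB \cap \fL$ (available after unitization), each $u_k$ has coordinates tending to $0$ in norm, so $\|\delta_n((1-u_k)b(1-u_k))\|$ is close to $\|\delta_n(b)\|$ for large $n$ while being small for the finitely many troublesome early coordinates; passing to the limit in $k$ and then taking $\sup_n$ recovers exactly $\limsup_n \|\delta_n(b)\|$. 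This approximate-unit computation, which is the heart of the argument, converts the naive but illegal ``truncation'' into a legitimate operation inside $\fB$, and once it is in place both inequalities follow and the scalar case — hence the general statement — is complete.
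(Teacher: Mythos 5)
Your reduction to the scalar case and your first inequality, $\|b+(\fB\cap\fL)\|\geq\limsup_{n\to\infty}\|\delta_n(b)\|$, are both correct and agree with the first half of the paper's proof. The gap is in the reverse inequality, at precisely the step you call the heart of the argument. If $(u_k)$ is an approximate unit for $\J=\fB\cap\fL$, then for each \emph{fixed} coordinate $n$ the sequence $(\delta_n(u_k))_k$ converges not to the identity but to the unit $p_n$ of the finite-dimensional ideal $\delta_n(\J)\subset\delta_n(\fB)$, a central projection which may be a proper subprojection of the identity of $\bM_{r_n}$, and may even be $0$. Hence $\lim_k\|\delta_n(b-bu_k)\|=\|\delta_n(b)(1-p_n)\|$, and nothing in your argument makes this small at the troublesome early coordinates: the correction $bu_k$ can only ever affect the part of $\delta_n(b)$ lying over $\delta_n(\J)$. (Quasicentrality is irrelevant here; it controls commutators, not coordinatewise convergence to the identity.) The degenerate case $\fB\cap\fL=\{0\}$ makes the failure stark: then $u_k=0$, your computation returns only $\|b+\J\|=\|b\|=\sup_n\|\delta_n(b)\|$, and the lemma's assertion that this sup equals the limsup receives no proof at all. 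One might try to rescue the estimate by noting that $x\mapsto\delta_n(x)(1-p_n)$ is a $*$-homomorphism annihilating $\J$, hence satisfies $\|\delta_n(b)(1-p_n)\|\leq\|b+\J\|$; but bounding $\|b+\J\|$ by the limsup is exactly what is to be proven, so this is circular. That the gap is genuine, and not merely presentational, can be seen from the paper's own last example (Section 6): there a \emph{non-selfadjoint} algebra $\A\subset\prod_{n=1}^\infty\bM_n$ is constructed with $\A\cap\fL=\{0\}$ and with elements satisfying $\limsup_n\|\gamma_n(A)\|<\|A\|$. In that degenerate situation every step of your argument goes through verbatim, yet the conclusion of the lemma fails; so an argument of this shape cannot prove the lemma without some further, genuinely $\rC^*$-algebraic, input.

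The missing input is the second isomorphism theorem for $\rC^*$-algebras, and it is exactly how the paper proceeds: the canonical map $(\fB+\fL)/\fL\to\fB/(\fB\cap\fL)$ is a $*$-isomorphism (\cite{blackadar2006}, Corollary II.5.1.3 --- this is where injectivity of a $*$-homomorphism forcing isometry does real work, and where self-adjointness enters), so that
\[
\|b+(\fB\cap\fL)\|=\inf\{\|b+L\|:L\in\fL\},
\]
the infimum being over \emph{all} of $\fL$ rather than over $\fB\cap\fL$. This is what legalizes the truncation you wanted to perform: choosing $L\in\fL$ to cancel the coordinates $n<M$ of $b$ (an element which typically does not lie in $\fB$) yields $\|b+(\fB\cap\fL)\|\leq\sup_{n\geq M}\|\delta_n(b)\|$, and letting $M\to\infty$ gives the limsup bound. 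With your approximate-unit step replaced by this single citation, the rest of your proposal (the matrix-level reduction and the first inequality) assembles into a complete proof, essentially identical to the paper's.
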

\begin{proof}
For convenience, we put $\fK=\fA\cap \fL$. Fix $d\in \bN$ and $A\in \bM_d(\fA)$. Let $T=[t_{jk}]_{j,k=1}^d\in \bM_d(\fK)$ and let $\delta>0$. Then, there is an $N\in \bN$ such that 
\[
\|\gamma_n(t_{jk})\|<\delta/d^2, \quad 1\leq j,k\leq d
\]
if $n\geq N$. We have
\begin{align*}
\|A+T\|&=\sup_{n\in \bN}\|\gamma^{(d)}_n(A+T)\|\\
&\geq \sup_{n\geq N}\|\gamma^{(d)}_n(A+T)\|\\
&\geq \sup_{n\geq N}\|\gamma^{(d)}_n(A)\|-\delta.
\end{align*}
Thus, we find
\[
\|A+T\|\geq \inf_{m\in \bN}\sup_{n\geq m}\|\gamma^{(d)}_n(A)\|-\delta
\]
and since $T\in  \bM_d(\fK)$ is arbitrary, this means that
\[
\|\kappa^{(d)}(A)\|\geq \inf_{m\in \bN}\sup_{n\geq m}\|\gamma^{(d)}_n(A)\|-\delta.
\]
Next, we observe that the map
\[
(\fA+\fL)/\fL\to \fA/\fK
\]
defined as
\[
(a+t)+\fL\mapsto a+\fK, \quad a\in \fA, t\in \fL
\]
is a $*$-isomorphism \cite[Corollary II.5.1.3]{blackadar2006}, so that
\[
\|\kappa^{(d)}(A)\|=\inf\{\|A+L\|:L\in \bM_d(\fL)\}.
\]
There is $M\in \bN$ with the property that 
\[
\sup_{n\geq M}\|\gamma_n^{(d)}(A)\|\leq \inf_{m\in \bN}\sup_{n\geq m}\|\gamma^{(d)}_n(A)\|+\delta.
\]
 For each $1\leq j,k\leq d$, let $t_{jk}\in \fL$ be defined as
\[
\gamma_n(t_{jk})=\begin{cases}
-\gamma_n(a_{jk}) & \text{ if } n<M,\\
0 & \text{ otherwise}.
\end{cases}
\]
Put $T=[t_{jk}]_{j,k}\in \bM_d(\fL)$ and note that
\begin{align*}
\|A+T\|&=\sup_{n\in \bN}\|\gamma_n^{(d)}(A+T)\|\\
&=\sup_{n\geq M}\|\gamma_n^{(d)}(A)\|\\
& \leq  \inf_{m\in \bN}\sup_{n\geq m}\|\gamma^{(d)}_n(A)\|+\delta.
\end{align*}
Thus,
\[
\|\kappa^{(d)}(A)\|=\inf\{\|A+L\|:L\in \bM_d(\fL)\}\leq  \inf_{m\in \bN}\sup_{n\geq m}\|\gamma^{(d)}_n(A)\|+\delta.
\]
We conclude that
\[
\inf_{m\in \bN}\sup_{n\geq m}\|\gamma^{(d)}_n(A)\|-\delta\leq \|\kappa^{(d)}(A)\|\leq \inf_{m\in \bN}\sup_{n\geq m}\|\gamma^{(d)}_n(A)\|+\delta.
\]
Since $\delta>0$ is arbitrary, the proof is complete.
\end{proof}

We can now identify a sufficient condition for the norm of an element to be attained in a finite-dimensional representation. Roughly speaking, the condition says that the element must be small at infinity.

\begin{theorem}\label{T:RFDnorm}
Let $\A\subset \prod_{n=1}^\infty \bM_{r_n}$ be an operator algebra and let 
\[
\kappa:\A\to \A/(\fL\cap \A)
\]
denote the quotient map. Let $d\in \bN$ and let $A\in \bM_d(\A)$ be an element with the property that $\|\kappa^{(d)}(A)\|<\|A\|$. Then, there is a finite-dimensional Hilbert space $\fF$ and a completely contractive homomorphism $\pi:\A\to B(\fF)$ such that $\|\pi^{(d)}(A)\|=\|A\|$.
\end{theorem}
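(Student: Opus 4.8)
The plan is to use the coordinate projections themselves as the desired finite-dimensional representations. For each $n$, the map $\gamma_n|_{\A}:\A\to \bM_{r_n}$ is a completely contractive homomorphism into the finite-dimensional $\rC^*$-algebra $\bM_{r_n}=B(\bC^{r_n})$, and since the norm on $\prod_{m=1}^\infty\bM_{r_m}$ is the supremum of the coordinate norms, we have $\|A\|=\sup_{n\in\bN}\|\gamma_n^{(d)}(A)\|$. The goal is therefore to show that, under the hypothesis $\|\kappa^{(d)}(A)\|<\|A\|$, this supremum is in fact attained at some finite index $n_0$; taking $\pi=\gamma_{n_0}|_{\A}$ and $\fF=\bC^{r_{n_0}}$ would then immediately finish the proof.

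The key step is to relate the operator-algebra quotient norm on the left-hand side of the hypothesis to the asymptotic behaviour of the coordinates. Concretely, I would establish that
\[
\limsup_{n\to\infty}\|\gamma_n^{(d)}(A)\|\leq \|\kappa^{(d)}(A)\|.
\]
This is the content of the first half of the proof of Lemma \ref{L:limsup}, adapted to the present setting: given any $T\in\bM_d(\fL\cap\A)$, the coordinates $\gamma_n^{(d)}(T)$ tend to $0$ in norm, so $\limsup_n\|\gamma_n^{(d)}(A)\|=\limsup_n\|\gamma_n^{(d)}(A+T)\|\leq \|A+T\|$. Taking the infimum over all such $T$ and recalling that $\|\kappa^{(d)}(A)\|=\inf\{\|A+T\|:T\in\bM_d(\fL\cap\A)\}$ yields the displayed inequality. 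I emphasize that this uses only the definition of the operator-algebra quotient norm, so there is no need to pass to $\rC^*(\A)$ or to invoke the $*$-isomorphism used in Lemma \ref{L:limsup}.

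Combining this inequality with the hypothesis gives $\limsup_{n\to\infty}\|\gamma_n^{(d)}(A)\|<\|A\|$, so there exist $\eps>0$ and $N\in\bN$ with $\|\gamma_n^{(d)}(A)\|\leq \|A\|-\eps$ for all $n\geq N$. Consequently
\[
\sup_{n\geq N}\|\gamma_n^{(d)}(A)\|<\|A\|=\sup_{n\in\bN}\|\gamma_n^{(d)}(A)\|,
\]
which forces the supremum to be achieved among the finitely many indices $n<N$; that is, there is some $n_0<N$ with $\|\gamma_{n_0}^{(d)}(A)\|=\|A\|$. Setting $\fF=\bC^{r_{n_0}}$ and $\pi=\gamma_{n_0}|_{\A}$, we obtain a completely contractive homomorphism into $B(\fF)$ satisfying $\|\pi^{(d)}(A)\|=\|A\|$, as required.

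The argument is short and essentially self-contained, so there is no serious obstacle. The one point requiring care is the correct direction of the inequality relating the quotient norm to the $\limsup$, together with the observation that the operator-algebra quotient (rather than a $\rC^*$-quotient) already suffices to produce it; everything else is a matter of extracting a finite index from a supremum whose tail is bounded strictly below $\|A\|$.
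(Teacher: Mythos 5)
Your proof is correct and takes essentially the same route as the paper: both arguments reduce the problem to showing $\limsup_{n\to\infty}\|\gamma_n^{(d)}(A)\|<\|A\|$ and then extract a finite index at which the supremum $\|A\|=\sup_n\|\gamma_n^{(d)}(A)\|$ must be attained, using that coordinate projection as $\pi$. Your write-up is in fact slightly tighter on one point: the paper simply invokes Lemma \ref{L:limsup}, which is stated for $\rC^*$-algebras, whereas you note explicitly that only the one-sided inequality $\limsup_n\|\gamma_n^{(d)}(A)\|\leq\|\kappa^{(d)}(A)\|$ is needed and that it follows from the definition of the operator-algebra quotient norm alone, without the $*$-isomorphism used in the harder half of that lemma.
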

\begin{proof}
By virtue of Lemma \ref{L:limsup}, we find that
\[
\inf_{m\in \bN}\sup_{n\geq m}\|\gamma^{(d)}_n(A)\|<\|A\|.
\]
On the other hand, we know that
\[
\|A\|=\sup_{n\in \bN}\|\gamma^{(d)}_n(A)\|.
\]
Thus, there is $m\in \bN$ with the property that
\[
\max_{1\leq n\leq m}\|\gamma^{(d)}_n(A)\|=\|A\|
\]
which clearly implies the desired statement.
\end{proof}

If the sizes of the matrix algebras are bounded, more can be said.

\begin{corollary}\label{C:RFDnorm}
Assume that the sequence $(r_n)_n$ is bounded and let $\A\subset \prod_{n=1}^\infty \bM_{r_n}$ be an operator algebra.  For every $d\in \bN$ and every $A\in \bM_d(\A)$, there is a finite-dimensional Hilbert space $\fF$ and a completely contractive homomorphism $\pi:\A\to B(\fF)$ such that $\|\pi^{(d)}(A)\|=\|A\|$.
\end{corollary}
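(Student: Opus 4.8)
The plan is to split the argument according to whether the quantity $\|\kappa^{(d)}(A)\|$ detects the norm of $A$ or not, and to isolate the genuinely new content in a compactness step that the boundedness hypothesis makes available. First I would dispose of the easy case: if $\|\kappa^{(d)}(A)\|<\|A\|$, then Theorem \ref{T:RFDnorm} applies verbatim and produces the desired finite-dimensional Hilbert space $\fF$ together with a completely contractive homomorphism $\pi:\A\to B(\fF)$ satisfying $\|\pi^{(d)}(A)\|=\|A\|$. Hence I may assume $\|\kappa^{(d)}(A)\|=\|A\|$ for the remainder of the proof. By Lemma \ref{L:limsup} this equality says that $\limsup_{n\to\infty}\|\gamma_n^{(d)}(A)\|=\|A\|$, so there is a sequence $(n_k)_k$ with $\|\gamma_{n_k}^{(d)}(A)\|\to\|A\|$.

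The next step is where boundedness enters. Since $(r_n)_n$ is bounded, it assumes only finitely many values, so after passing to a subsequence I may assume that $r_{n_k}=s$ is a fixed constant. Each coordinate projection restricts to a completely contractive homomorphism $\gamma_{n_k}|_{\A}:\A\to\bM_s$ landing in one and the same finite-dimensional algebra. The crux is to extract from this sequence a single completely contractive homomorphism into $\bM_s$ that attains the norm on $A$. I would do this with a free ultrafilter $\omega$ on $\bN$: for each $a\in\A$ the sequence $(\gamma_{n_k}(a))_k$ lies in the closed ball of radius $\|a\|$ of the finite-dimensional space $\bM_s$, so the limit $\pi(a):=\lim_{k\to\omega}\gamma_{n_k}(a)$ exists in $\bM_s$ and defines a map $\pi:\A\to\bM_s=B(\bC^s)$.

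It then remains to check the three properties of $\pi$. Multiplicativity follows from the joint continuity of multiplication on $\bM_s$, since ultrafilter limits of bounded convergent nets respect products; thus $\pi(ab)=\lim_{k\to\omega}\gamma_{n_k}(a)\gamma_{n_k}(b)=\pi(a)\pi(b)$. Complete contractivity follows because, for any $T\in\bM_m(\A)$, one has $\pi^{(m)}(T)=\lim_{k\to\omega}\gamma_{n_k}^{(m)}(T)$, and continuity of the norm gives $\|\pi^{(m)}(T)\|=\lim_{k\to\omega}\|\gamma_{n_k}^{(m)}(T)\|\le\|T\|$, each $\gamma_{n_k}$ being completely contractive. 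The same computation applied to $A$ yields norm attainment: since $\|\gamma_{n_k}^{(d)}(A)\|\to\|A\|$ as an ordinary limit, its limit along the free ultrafilter agrees, so $\|\pi^{(d)}(A)\|=\|A\|$. Taking $\fF=\bC^s$ finishes the proof.

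I expect the only real subtlety to be the passage from approximate to exact norm attainment in the case where the supremum defining $\|A\|$ is not achieved at any finite coordinate. This is precisely where the boundedness of $(r_n)_n$ is indispensable, as it confines the approximating representations to a single finite-dimensional target $\bM_s$, inside which a compactness limit is available. Should one wish to avoid ultrafilters, the very same limiting homomorphism can be produced from the compactness of the point-norm closed set of completely contractive homomorphisms $\A\to\bM_s$ sitting inside the product $\prod_{a\in\A}\{x\in\bM_s:\|x\|\le\|a\|\}$, together with the fact that $\pi\mapsto\|\pi^{(d)}(A)\|$ depends only on the finitely many values of $\pi$ on the entries of $A$ and is therefore point-norm continuous.
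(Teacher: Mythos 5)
Your second step contains a genuine gap: you apply Lemma \ref{L:limsup} to the non-selfadjoint algebra $\A$, but that lemma is stated and proved only for $\rC^*$-algebras, and the direction you need is false in general for operator algebras. Concretely, you need $\|\kappa^{(d)}(A)\|\leq \limsup_{n}\|\gamma_n^{(d)}(A)\|$, and the paper's proof of that inequality uses the $\rC^*$-identification $(\fA+\fL)/\fL\cong \fA/(\fA\cap\fL)$ in order to subtract truncations that lie in $\fL$ but typically \emph{not} in $\fA$; for a non-selfadjoint $\A$ the ideal $\A\cap\fL$ can be far too small for this. A counterexample with bounded sizes: take $r_n=2$ for all $n$, let $a\in\prod_{n=1}^\infty\bM_2$ satisfy $\gamma_1(a)=E_{12}$ and $\gamma_n(a)=\tfrac{1}{2}E_{12}$ for $n\geq 2$, and let $\A=\spn\{I,a\}$. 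Then $\A\cap\fL=\{0\}$, so $\kappa$ is the identity and $\|\kappa(a)\|=\|a\|=1$, while $\limsup_n\|\gamma_n(a)\|=\tfrac{1}{2}$. You are then in your second case, but there is no sequence $n_k\to\infty$ with $\|\gamma_{n_k}(a)\|\to 1$, and any ultrafilter limit along distinct coordinates yields $\|\pi(a)\|\leq\tfrac{1}{2}$, so your argument stalls (the corollary is of course still true here: project onto the first coordinate). Note that the paper itself only ever invokes Lemma \ref{L:limsup} for operator algebras in the valid direction $\|\kappa^{(d)}(A)\|\geq \limsup_n\|\gamma_n^{(d)}(A)\|$, which is why Theorem \ref{T:RFDnorm} is unaffected.

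The repair is easy and in fact simplifies your proof: no case split is needed at all. Since $\|A\|=\sup_n\|\gamma_n^{(d)}(A)\|$, there is always a sequence $(n_k)_k$ of coordinates, repetitions allowed and not necessarily tending to infinity, with $\|\gamma_{n_k}^{(d)}(A)\|\to\|A\|$; pass to a subsequence along which $r_{n_k}=s$ is constant and run your ultrafilter (or point-norm compactness) limit verbatim. Equivalently, split on whether the supremum is attained at some coordinate (then $\gamma_n|_{\A}$ for that coordinate already works) or not (in which case $\limsup_n\|\gamma_n^{(d)}(A)\|=\|A\|$ holds automatically, and your limiting argument applies). With that modification your proof is correct, and it is essentially the paper's intended argument: the paper disposes of the first case via Theorem \ref{T:RFDnorm} and delegates the remaining case to the proof of \cite{CM2017rfd}*{Proposition 3.5}, which is precisely the compactness-from-bounded-sizes idea you spell out.
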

\begin{proof}
Fix $d\in \bN$ and $A\in\bM_d(\A)$. If
\[
\inf_{m\in \bN}\sup_{n\geq m}\|\gamma^{(d)}_n(A)\|<\|A\|
\]
then the conclusion follows from Theorem \ref{T:RFDnorm}. If, on the other hand,
\[
\inf_{m\in \bN}\sup_{n\geq m}\|\gamma^{(d)}_n(A)\|=\|A\|
\]
then the conclusion follows as in the proof of \cite[Proposition 3.5]{CM2017rfd}.
\end{proof}

We note that there are examples of RFD $\rC^*$-algebras containing elements that do not attain their norms in a finite-dimensional representation \cite[Theorem 4.4]{courtney2017}. The full $\rC^*$-algebra of the free group on two generators is such an example.

\subsection{Examples of RFD operator algebras}\label{SS:Examples}

The remainder of this section is devoted to studying residual finite-dimensionality in various concrete examples. First, we  mention that the property of an operator algebra being RFD isn't preserved by crossed products with groups. This is an immediate consequence of the so-called \emph{Takai duality} \cite[Theorem 4.4]{KRmem}. We refer the interested reader to \cite{KRmem} for details on these topics. For now, we turn to a widely studied class of operator algebras: the tensor algebras of $\rC^*$-correspondences \cite{pimsner1997},\cite{MS1998}. We are interested in determining when these are RFD.

We briefly recall the relevant definitions; more details can be found in \cite{MS1998} or \cite[Section 4.6]{BO2008} for instance. Let $\fA$ be a C$^*$-algebra and let $X$ be a right Hilbert $\fA$-module. Denote by $\L(X)$ the $\rC^*$-algebra of  adjointable operators on $X$.  If in addition there is a non-degenerate $*$-homomorphism $\varphi_X : \fA \rightarrow \L(X)$ (which we think of as a left action of $\fA$ on $X$), then $X$ is called a {\em $\rC^*$-correspondence over $\fA$}. When there is no danger of confusion, $\phi_X$ is not mentioned explicitly. 

It is possible to form direct sums and tensor products of $\rC^*$-correspondences. Given a $\rC^*$-correspondence $X$, we can then define the \emph{Fock correspondence over $X$} as 
\[
\F_X := \fA \oplus \bigoplus_{n=1}^\infty X^{\otimes n}.
\]
The {\em tensor algebra} of $X$ is the norm closed operator algebra $\T_{X}^+\subset \L(\F_X)$ generated by the image of the creation map $t_\infty :X \rightarrow \L(\F_X)$ and the non-degenerate $*$-homomorphism $\rho_\infty : \fA \rightarrow \L(\F_X)$ that gives rise to the natural left module action. The \emph{Toeplitz algebra} is defined as $\T_X=\rC^*(\T^+_X)$. In our analysis of the residual finite-dimensionality of $\T^+_X$, we will not require the precise definitions of the maps $t_\infty$ and $\rho_\infty$, but we will require the following important properties. First we have that
\[
\rho_\infty(a)t_\infty(x)=t_\infty(\phi_X(a) x) , \quad   t_\infty(x)\rho_\infty(a)=t_\infty( xa)
\]
for every $a\in \fA,x\in X$, and
\[
 t_\infty(x_1)^* t_\infty(x_2)=\rho_\infty(\langle  x_1,x_2\rangle)
\]
for every $x_1,x_2\in X$. In fact, this says that the pair $(\rho_\infty,t_\infty)$ is an  \emph{isometric representation} of the $\rC^*$-correspondence $X$.  One consequence of this is that $\T^+_X$ is the closure of the subspace $\T^0_X\subset \L(\F_X)$ spanned by $\rho_\infty(\fA)$ and elements of the form 
\[
t_\infty(x_1)t_\infty(x_2)\cdots t_\infty(x_n)
\]
for some $n\in \bN$ and $x_1,\ldots,x_n\in X$. 

The following elementary observation, inspired by the argument given in \cite[Theorem 4.1]{pestov1994}, will be very useful for us throughout the paper.

\begin{lemma}\label{L:fdiminvsub}
Let $\A\subset B(\fH)$ be a finite-dimensional operator algebra and let $d\in \bN$. For each $1\leq \nu\leq d$, let $r_\nu\in \bN$ and let $x^{(\nu)}_{j,1},\ldots, x^{(\nu)}_{j,N_j}\in B(\fH)$ be arbitrary elements for every $1\leq j\leq r_\nu$. For each $1\leq \nu\leq d$, put
\[
s_\nu=\sum_{j=1}^{r_\nu} x^{(\nu)}_{j,1}\cdots x^{(\nu)}_{j,N_j}.
\]
Let $\Xi\subset \H$ be a finite subset. Then, there is a finite-dimensional subspace $\fF\subset \fH$ containing $\Xi$ that is invariant for $\A$ and such that 
\[
s_\nu\xi=\sum_{j=1}^{r_\nu} P_{\fF}x^{(\nu)}_{j,1}P_{\fF} x^{(\nu)}_{j,2}P_{\fF}\cdots P_{\fF}x^{(\nu)}_{j,N_j}\xi
\]
for every $1\leq \nu\leq d$ and every $\xi\in \Xi$.

\end{lemma}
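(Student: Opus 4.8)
The plan is to choose $\fF$ large enough that the compressions $P_\fF$ appearing on the right-hand side act trivially on every vector they encounter. Fix $\nu$, $j$, and $\xi\in\Xi$, and for $1\le k\le N_j$ set
\[
v_k = x^{(\nu)}_{j,k}x^{(\nu)}_{j,k+1}\cdots x^{(\nu)}_{j,N_j}\xi.
\]
If every such $v_k$ lies in $\fF$, then reading the product $P_\fF x^{(\nu)}_{j,1}P_\fF\cdots P_\fF x^{(\nu)}_{j,N_j}\xi$ from right to left, each compression $P_\fF$ meets a vector $v_k\in\fF$ and leaves it unchanged, so the compressed product collapses to $v_1 = x^{(\nu)}_{j,1}\cdots x^{(\nu)}_{j,N_j}\xi$. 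Summing over $j$ and using linearity then yields the stated identity for $s_\nu\xi$. Thus the entire problem reduces to producing a finite-dimensional $\A$-invariant subspace $\fF\supseteq\Xi$ that also contains this finite family of intermediate vectors.

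To do this I would first let $W\subset\fH$ be the finite set consisting of $\Xi$ together with all the vectors $v_k$ above, as $\nu$, $j$, $k$, and $\xi$ range over their (finitely many) allowed values. Fixing a basis $\{a_1,\dots,a_m\}$ of the finite-dimensional algebra $\A$, I would then set
\[
\fF = \spn\bigl(W\cup\{a_iw : 1\le i\le m,\ w\in W\}\bigr).
\]
This subspace is finite-dimensional because $W$ is finite and $\dim\A=m<\infty$, and it contains $\Xi$ and every $v_k$ by construction.

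It remains to check that $\fF$ is invariant for $\A$, and this is the only place where the algebra structure of $\A$ is used. It suffices to verify $av\in\fF$ for $a\in\A$ as $v$ ranges over the spanning set of $\fF$. For $v=w\in W$, writing $a=\sum_i c_ia_i$ gives $aw=\sum_i c_i a_iw\in\fF$; for $v=a_iw$, the fact that $aa_i\in\A$, say $aa_i=\sum_l c_l a_l$, gives $a(a_iw)=(aa_i)w=\sum_l c_l a_lw\in\fF$. Hence $\A\fF\subseteq\fF$, completing the construction.

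There is no serious obstacle here; the content lies entirely in setting up $\fF$ correctly. The one point to watch is that enlarging the span of $W$ to achieve $\A$-invariance must not feed back and generate new intermediate vectors needing to be captured. It does not, since the operators $x^{(\nu)}_{j,k}$ are arbitrary elements of $B(\fH)$ unrelated to $\A$, and the identity concerns only the action of the fixed products on the fixed finite set $\Xi$. Consequently, collecting the $v_k$ first and then closing up under $\A$ is not circular, and the two requirements—finite-dimensionality together with $\A$-invariance, and containment of all intermediate vectors—are met simultaneously.
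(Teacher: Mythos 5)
Your proof is correct and takes essentially the same approach as the paper: both define the subspace as the span of $\Xi$, the tail vectors $x^{(\nu)}_{j,k}x^{(\nu)}_{j,k+1}\cdots x^{(\nu)}_{j,N_j}\xi$, and their images under $\A$ (the paper writes this as $\fF=\ol{\fF_0+\spn\A\fF_0}$, which is your construction without the explicit basis), and both then collapse the compressed products from right to left since each $P_{\fF}$ acts on a vector already in $\fF$. Your explicit check of $\A$-invariance via a basis and the relation $a(a_iw)=(aa_i)w$ is just a more detailed rendering of what the paper asserts "by construction."
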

\begin{proof}
Define $\fF_0 \subset \fH$ to be the finite-dimensional subspace spanned by $\Xi$ and by $x^{(\nu)}_{j,k}x^{(\nu)}_{j,k+1}\cdots x^{(\nu)}_{j,N_j}\Xi$ for $1\leq \nu\leq d,1\leq j\leq r_\nu, 1\leq k\leq N_j.$ Put $\fF=\ol{\fF_0+\spn\A\fF_0}$. By construction, we see that $\fF$ is invariant for $\A$, and it is finite-dimensional since $\A$ and $\fF_0$ are. For each $1\leq \nu\leq d$ and $\xi\in \Xi$, we observe now that 
\begin{align*}
\sum_{j=1}^{r_\nu}  P_{\fF}x^{(\nu)}_{j,1}P_{\fF} x^{(\nu)}_{j,2}P_{\fF}\cdots P_{\fF}x^{(\nu)}_{j,N_j}\xi&=\sum_{j=1}^{r_\nu}P_{\fF}x^{(\nu)}_{j,1}P_{\fF} x^{(\nu)}_{j,2}P_{\fF}\cdots P_{\fF}x^{(\nu)}_{j,N_j-1}x^{(\nu)}_{j,N_j}\xi\\
&=\sum_{j=1}^{r_\nu}P_{\fF}x^{(\nu)}_{j,1}P_{\fF} x^{(\nu)}_{j,2}P_{\fF}\cdots P_{\fF}x^{(\nu)}_{j,N_j-2}x^{(\nu)}_{j,N_j-1}x^{(\nu)}_{j,N_j}\xi
\end{align*}
and proceeding inductively we find
\[
\sum_{j=1}^{r_\nu}  P_{\fF}x^{(\nu)}_{j,1}P_{\fF} x^{(\nu)}_{j,2}P_{\fF}\cdots P_{\fF}x^{(\nu)}_{j,N_j}\xi=s_\nu\xi.
\]
\end{proof}

We can now show that the tensor algebra of a $\rC^*$-correspondence is RFD whenever the underlying $\rC^*$ algebra is finite-dimensional. In fact, we can say something a bit finer.

\begin{theorem}\label{T:fdcorrespondence}
Let $\fA$ be a finite-dimensional $\rC^*$-algebra and let $X$ be a $\rC^*$-corres\-pondence over $\fA$. Then, for every $d\in \bN$ and every $S\in \bM_d(\T^0_X)$ there is a finite-dimensional Hilbert space $\fF$ and a completely contractive homomorphism $\pi:\T^+_X\to B(\fF)$ with the property that $\|\pi^{(d)}(S)\|=\|S\|$. In particular, the tensor algebra $\T_{X}^+$ is RFD.
\end{theorem}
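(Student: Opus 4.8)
The plan is to reduce the general statement to a finite-dimensional computation via the Fock space structure, then apply the compression Lemma \ref{L:fdiminvsub} to produce the required finite-dimensional representation. First I would fix $d\in\bN$ and an element $S\in\bM_d(\T^0_X)$. Since $\T^0_X$ is spanned by $\rho_\infty(\fA)$ together with products $t_\infty(x_1)\cdots t_\infty(x_n)$, each matrix entry $S_{\nu\mu}$ of $S$ is a finite sum of such terms, so each entry has precisely the form $s_\nu=\sum_j x^{(\nu)}_{j,1}\cdots x^{(\nu)}_{j,N_j}$ appearing in Lemma \ref{L:fdiminvsub}, where the factors are operators $t_\infty(x)$ and $\rho_\infty(a)$ acting on the Fock correspondence $\F_X=\fA\oplus\bigoplus_{n\geq1}X^{\otimes n}$.

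Next I would realize the norm $\|S\|$ approximately on a finite collection of vectors. The operator $S$ acts on $\F_X^{(d)}$, so I would choose a unit vector $\zeta=\zeta_1\oplus\cdots\oplus\zeta_d\in\F_X^{(d)}$ with $\|S\zeta\|$ within $\ep$ of $\|S\|$; because $\fA$ is finite-dimensional, a key point is that each $\zeta_i$ can be taken in a \emph{finite} direct sum $\fA\oplus X\oplus\cdots\oplus X^{\otimes M}$ of the Fock summands, since these are dense. I would then let $\Xi$ be the finite set of the $\zeta_i$ together with all the intermediate vectors obtained by applying tails of the products, and invoke Lemma \ref{L:fdiminvsub} to obtain a finite-dimensional subspace $\fF\subset\F_X$ containing $\Xi$, invariant for $\T^+_X$, on which the compressed products agree with the true ones on $\Xi$. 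The compression $\pi:\T^+_X\to B(\fF)$, $\pi(b)=P_\fF b|_\fF$, is unital and completely contractive, and it is a homomorphism on $\T^+_X$ precisely because $\fF$ is invariant for the algebra. By the defining property of $\fF$ we get $\pi^{(d)}(S)\zeta=S\zeta$, whence $\|\pi^{(d)}(S)\|\geq\|S\zeta\|\geq\|S\|-\ep$, and combined with complete contractivity this forces $\|\pi^{(d)}(S)\|$ to approximate $\|S\|$.

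To upgrade from the $\ep$-approximation to an exact equality, I would argue that for the specific element $S$ one can in fact arrange $\|S\zeta\|=\|S\|$ on a single finite-dimensional piece, or alternatively take a direct sum $\bigoplus_k \pi_k$ over a sequence $\ep_k\to0$: the resulting block-diagonal map is a completely contractive homomorphism into a product of finite-dimensional $\rC^*$-algebras (hence RFD by Theorem \ref{T:fdimvsfdim}) with $\|(\bigoplus_k\pi_k)^{(d)}(S)\|=\|S\|$. The cleanest route to the exact statement is to observe that the norm of $S\in B(\F_X^{(d)})$ is attained, or approximately attained, on vectors supported in finitely many Fock levels, and that Lemma \ref{L:fdiminvsub} preserves the action on precisely those vectors. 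Finally, for the ``in particular'' clause, since every element of $\bM_d(\T^+_X)$ is a norm-limit of elements of $\bM_d(\T^0_X)$, the norm-attaining property transfers and criterion (iii) of Theorem \ref{T:fdimvsfdim} is satisfied, so $\T^+_X$ is RFD.

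The main obstacle I anticipate is the interplay between the infinitely many Fock levels and the demand for an \emph{exact} norm attainment: a generic unit vector attaining the norm need not live in finitely many summands, so the argument hinges on showing that finite truncation costs nothing here. This is where finite-dimensionality of $\fA$ is essential, since it controls the left action $\varphi_X$ and ensures the truncated Fock vectors remain dense in a manner compatible with the algebraic structure of $\T^0_X$. Getting this truncation step precise, rather than settling for the $\ep$-version and passing to a direct sum, is the delicate part of the proof.
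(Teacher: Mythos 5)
There are two genuine gaps here, and they are exactly the two points where the paper's proof does real work.

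First, the multiplicativity of your map $\pi$ is unjustified, and the claim it rests on is false. Lemma \ref{L:fdiminvsub} produces a finite-dimensional subspace $\fF$ invariant only for the \emph{finite-dimensional} algebra fed into it --- here, necessarily $\rho_\infty(\fA)$ --- not for all of $\T^+_X$. Indeed, in general no nonzero finite-dimensional subspace of $\F_X$ can be invariant for $\T^+_X$: take $\fA=\bC$ and $X=\bC$, so that $\T^+_X$ is the disc algebra generated by the unilateral shift on $H^2(\bD)=\F_X$, which has no nonzero finite-dimensional invariant subspaces. Consequently the compression $b\mapsto P_\fF b|_\fF$ is \emph{not} a homomorphism on $\T^+_X$; the identity guaranteed by Lemma \ref{L:fdiminvsub} only says that certain compressed \emph{products of generators} agree with the true products on the vectors of $\Xi$, which is much weaker than multiplicativity of the compression. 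The paper bridges this gap with an idea your proposal omits entirely: it defines $t(x)=P_\fF t_\infty(x)|_\fF$ and $\rho(a)=\rho_\infty(a)|_\fF$, checks that $(\rho,t)$ is a completely contractive \emph{covariant representation of the correspondence} (this is where invariance of $\fF$ under $\rho_\infty(\fA)$ is used, to make $\rho$ a $*$-homomorphism and to verify the module relations), and then invokes the universal property \cite[Theorem 3.10]{MS1998} to integrate $(\rho,t)$ to a completely contractive homomorphism $\pi:\T^+_X\to B(\fF)$. Multiplicativity comes from Muhly--Solel's theorem, not from invariance.

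Second, the exact norm attainment cannot be rescued by a truncation argument on the Fock space, because the norm of $S$ is typically not attained by \emph{any} vector there: already for $\fA=X=\bC$ and $S=\tfrac{1}{2}(I+t_\infty(1))$, one has $\|Sf\|<\|S\|=1$ for every unit vector $f\in H^2(\bD)$. So the ``delicate part'' you flag is not merely delicate --- the approach fails, and your fallback (a direct sum over $\eps_k\to 0$) lands in an infinite-dimensional product, which would salvage only the ``in particular'' RFD clause, not the stated theorem. The paper sidesteps the Fock space altogether: since $\fA$ is unital and $\rho_\infty$ is non-degenerate, $\T_X$ is unital, so one may choose a state $\psi$ on the $\rC^*$-algebra $\bM_d(\T_X)$ with $\psi(S^*S)=\|S\|^2$ and pass to its GNS representation, where the norm of $S$ \emph{is} attained at the cyclic vector. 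Writing that representation as an ampliation $\tau^{(d)}$ of a representation $\tau$ of $\T_X$ and applying Lemma \ref{L:fdiminvsub} to the finite-dimensional algebra $\tau(\rho_\infty(\fA))$ inside the GNS space (rather than inside $\F_X$) is what makes the exact equality $\|\pi^{(d)}(S)\|=\|S\|$ achievable in a single finite-dimensional representation.
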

\begin{proof}

Fix $d\in \bN$ and $S=[s_{\mu,\nu}]_{\mu,\nu}\in \bM_d(\T^0_X)$. For each $1\leq \mu,\nu\leq d$, there is a positive integer $r_{\mu,\nu}$, an element $a_{\mu,\nu}\in \fA$ and elements $x^{(\mu,\nu)}_{j,1},\ldots, x^{(\mu,\nu)}_{j,N_j}\in X$  for every $1\leq j\leq r_{\mu,\nu}$ such that
\[
s_{\mu,\nu}=\rho_\infty(a_{\mu,\nu})+\sum_{j=1}^{r_{\mu,\nu}} t_\infty(x^{(\mu,\nu)}_{j,1})\cdots t_\infty( x^{(\mu,\nu)}_{j,N_j}).
\]
Note now that $\fA$ is unital, and since $\rho_\infty$ is non-degenerate, it must also be unital. In particular, the Toeplitz algebra $\T_X$ is unital. We may choose  a state $\psi$ on $\bM_d(\T_X)$  such that $\psi(S^*S)=\|S\|^2$. Let $\sigma:\bM_d(\T_X)\to B(\fH)$ be the associated GNS representation with cyclic unit vector $\xi\in \fH$. Then, we see that
\[
\|\sigma(S)\xi\|^2=\psi(S^*S)=\|S\|^2.
\]
There is a Hilbert space $\fH'$, a unitary operator $U:\fH\to \fH'^{(d)}$ and a unital $*$-homomorphism $\tau: \T_X\to B(\fH')$ such that
\[
U\sigma(R)U^*=\tau^{(d)}(R), \quad R\in \bM_d(\T_X).
\]
Write 
\[
U\xi=\xi_1\oplus \xi_2\oplus \ldots \oplus \xi_d\in \fH'^{(d)}
\]
and set $\Xi=\{\xi_1,\ldots,\xi_d\}\subset \fH'$.  By Lemma \ref{L:fdiminvsub}, there a finite-dimensional subspace $\fF\subset \fH'$ containing $\Xi$ that is invariant for $\tau(\rho_\infty(\fA))$ and such that 
\begin{align*}
&\tau(s_{\mu,\nu})\xi_m\\
&=P_{\fF}\tau(\rho_\infty(a_{\mu,\nu}))\xi_m+\sum_{j=1}^{r_{\mu,\nu}} P_{\fF}\tau(t_\infty(x^{(\mu,\nu)}_{j,1}))P_{\fF}\tau(t_\infty(x^{(\mu,\nu)}_{j,2}))P_{\fF}\cdots P_{\fF} \tau(t_\infty(x^{(\mu,\nu)}_{j,N_j}))\xi_m
\end{align*}
for every $1\leq \mu,\nu\leq d$ and every $1\leq m\leq d$. Define now maps $t:X\to B(\fF)$ and $\rho:\fA\to B(\fF)$ as
\[
t(x)=P_{\fF}\tau(t_\infty(x))|_{\fF}, \quad \rho(a)=P_{\fF}\tau(\rho_\infty(a))|_{\fF}
\]
for every $a\in \fA$ and every $x\in X$. We see that
\[
\tau(s_{\mu,\nu})\xi_m=\rho(a_{\mu,\nu})\xi_m+\sum_{j=1}^{r_{\mu,\nu}} t(x^{(\mu,\nu)}_{j,1}) t(x^{(\mu,\nu)}_{j,2})\cdots  t(x^{(\mu,\nu)}_{j,N_j})\xi_m
\]
for every $1\leq \mu,\nu\leq d$ and every $1\leq m\leq d$. 
Since $\fF$ is invariant for $\tau(\rho_\infty(\fA))$, we obtain that $\rho$ is a unital $*$-homomorphism, and that
\begin{align*}
t(x)\rho(a)&=P_{\fF}\tau(t_\infty(x))P_{\fF}\tau(\rho_\infty(a))|_{\fF}=P_{\fF}\tau(t_\infty(x)\rho_\infty(a))|_{\fF}\\
&=P_{\fF}\tau(t_\infty (xa))|_{\fH}=t(xa)
\end{align*}
while
\begin{align*}
\rho(a)t(x)&=P_{\fF}\tau(\rho_\infty(a))P_{\fF}\tau(t_\infty(x))|_{\fF}=P_{\fF}\tau(\rho_\infty(a)t_\infty(x))|_{\fF}\\
&=P_{\fF}\tau(t_\infty (\phi_X(a)x))|_{\fH}=t(\phi_X(a)x)
\end{align*}
for every $a\in \fA$ and every $x\in X$. By \cite[Theorem 3.10]{MS1998}, we conclude that there is a completely contractive homomorphism $\pi:\T^+_X\to B(\fF)$ such that
\[
\pi(\rho_\infty(a))=\rho(a), \quad \pi(t_\infty(x))=t(x)
\]
for every $a\in \fA$ and every $x\in X$. In particular, we see that
\[
\pi(s_{\mu,\nu})\xi_m=\rho(a_{\mu,\nu})\xi_m+\sum_{j=1}^{r_{\mu,\nu}} t(x^{(\mu,\nu)}_{j,1}) t(x^{(\mu,\nu)}_{j,2})\cdots  t(x^{(\mu,\nu)}_{j,N_j})\xi_m=\tau(s_{\mu,\nu})\xi_m
\]
for every $1\leq \mu,\nu\leq d$ and every $1\leq m\leq d$. 
This means that
\[
\pi^{(d)}(S)U\xi=\tau^{(d)}(S)U\xi
\]
hence
\[
\|\pi^{(d)}(S)\|\geq \|\tau^{(d)}(S) U\xi\|=\|U\sigma(S)\xi\|=\|S\|
\]
and $\|\pi^{(d)}(S)\|=\|S\|$, which establishes the first statement. The second follows immediately, since $\T^0_X$ is dense in $\T^+_X$.
\end{proof}

Given a $\rC^*$-correspondence $X$ over $\fA$, it is known that $\T_{X}^+ \cap (\T_{X}^+)^* = \fA$ \cite[page 10]{KRmem}. In view of Theorem \ref{T:fdcorrespondence}, one may wonder whether, for a general operator algebra $\A$, the fact that the $\rC^*$-algebra $\A\cap \A^*$ is finite-dimensional  implies that $\A$ is necessarily RFD. We show that this is not the case. 

\begin{example}\label{E:diagonal}
Let $\fH$ be an infinite-dimensional separable Hilbert space. Let $\A\subset B(\fH^{(3)})$ be the operator algebra consisting of elements of the form
\[
\begin{bmatrix}0 &r&s\\0&0&t\\0&0&0 \end{bmatrix}, \quad r,s,t\in B(\fH).
\]
Then, $\A\cap \A^*=\{0\}$ yet we claim that $\A$ is not RFD. 

To see this, let $\theta:\A\to B(\fF)$ be a completely contractive homomorphism such that
\[
\theta\left(\begin{bmatrix}0 &0&I\\0&0&0\\0&0&0 \end{bmatrix} \right)\neq 0.
\]
There are completely contractive linear maps 
\[
\theta_{12},\theta_{13},\theta_{23} : B(\fH) \rightarrow B(\fF)
\]
such that
\[
\theta_{12}(t)=\theta\left(\begin{bmatrix}0 &t&0\\0&0&0\\0&0&0 \end{bmatrix} \right), 
\theta_{13}(t)=\theta\left(\begin{bmatrix}0 &0&t\\0&0&0\\0&0&0 \end{bmatrix} \right),
\theta_{23}(t)=\theta\left(\begin{bmatrix}0 &0&0\\0&0&t\\0&0&0 \end{bmatrix} \right)
\]
for every $t\in B(\fH)$. Since $\theta$ is multiplicative, we find
\[
\theta_{ij}(r)\theta_{ij}(s) = 0, \quad r,s\in B(\fH)
\]
for every $i,j$, while
\[
\theta_{12}(r)\theta_{23}(s) = \theta_{13}(rs), \quad r,s\in B(\fH).
\]
By choice of $\theta$, we know that $\theta_{13}(I)\neq 0$. Choose a sequence of partial isometries $(v_n)_n$ in $B(\fH)$ with the property that $v_n v_n^*=I$ and $v_n v_k^*=0$ for every  $k\neq n$ and for every $n\in \bN$. 
Let $c_1,\ldots,c_m\in \bC$ such that 
\[
\sum_{n=1}^m c_n \theta_{12}(v_n)=0.
\]
Thus, for each $1\leq k \leq m$ we find
\begin{align*}
c_k \theta_{13}(I)&=c_k \theta_{13}( v_k v_k^*)=\sum_{n=1}^m c_n \theta_{13}(v_nv_k^*)\\
& =\left(\sum_{n=1}^m c_n \theta_{12}(v_n)\right)\theta_{23}(v_k^*) =0
\end{align*}
which forces $c_k=0$. Hence, $\{\theta_{12}(v_n):n\in \bN\}$  is a  linearly independent set in $B(\fF)$, so that $\fF$ is infinite-dimensional.
\qed
\end{example}

As an application, we next consider a noteworthy special class of $\rC^*$-corres\-pondences.  Let $G = (E,V,s,r)$ be a countable directed graph. Let $c_{00}(E)$ denote the algebra of finitely supported functions on $E$. Let $c_0(V)$ denote the $\rC^*$-algebra obtained by taking the uniform closure of $c_{00}(V)$. We may define a $c_0(V)$-bimodule structure on $c_{00}(E)$ by setting
\[
(a\cdot x \cdot b)(e)=a(r(e))x(e)b(s(e)), \quad e\in E
\]
for every $a,b\in c_0(V)$ and every $x\in c_{00}(E)$. We may also define a $c_0(V)$-valued inner product on $c_{00}(E)$ by setting
\[
\langle x,y \rangle(v)=\sum_{e\in s^{-1}(v)}\ol{x(e)}y(e), \quad v\in V
\]
for every $x,y\in c_{00}(E)$. Upon applying a standard completion procedure, we obtain a $\rC^*$-correspondence $X_G$ over $c_0(V)$, called the \emph{graph correspondence of $G$}.  See \cite[Example 4.6.13]{BO2008} for more detail.

For our purposes, we will require the following observations. Fix a finite subset $F\subset V$. Extending functions by $0$ outside of $F$ and $V\setminus F$ respectively, we view $c_{0}(F)$ and $c_0(V\setminus F)$ as closed two-sided ideals of $c_{0}(V)$ such that 
\[
c_0(V)=c_0(F)\oplus c_0(V\setminus F).
\] 
A similar decomposition holds for the $\rC^*$-correspondence $X_G$. Indeed,  let
\[
E_F = \{e\in E : s(e), r(e) \in F\}.
\]
Extending functions by $0$ outside of $E_F$ and $E\setminus E_F$ respectively, we see that $c_{00}(E_F)$ and $c_{00}(E\setminus E_F)$ are submodules of $c_{00}(E)$. Furthermore, for $x,y\in c_{00}(E)$ we note that
\[
\langle x,y\rangle=\langle x|_{E_F},y|_{E_F}\rangle+\langle x|_{E\setminus E_F},y|_{E\setminus E_F}\rangle.
\]
Let $H=(E_F,F,s,r)$ and let $Y\subset X_G$ denote the closure of $c_{00}(E\setminus E_F)$. Then, $X_{H}$ and $Y$ are submodules of $X_G$ with
\[
X_G=X_{H}\oplus Y.
\]
With these preliminaries out of the way, we can prove the following.

\begin{theorem}\label{T:graph}
Let $G = (E,V,s,r)$ be a directed graph  and let $X_G$ be the associated graph correspondence. Then, $\T^+_{X_G}$ is RFD.
\end{theorem}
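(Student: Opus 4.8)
The plan is to prove residual finite-dimensionality through an approximate norm-attainment criterion. It suffices to show that for every $d\in\bN$, every $S\in\bM_d(\T^+_{X_G})$ and every $\eps>0$ there is a finite-dimensional Hilbert space $\fF$ and a completely contractive homomorphism $\pi:\T^+_{X_G}\to B(\fF)$ with $\|\pi^{(d)}(S)\|>\|S\|-\eps$; the direct sum of all completely contractive homomorphisms of $\T^+_{X_G}$ into the matrix algebras $\bM_n$ ($n\in\bN$) is then completely isometric, which is exactly the RFD property. Since $\T^0_{X_G}$ is dense in $\T^+_{X_G}$ and $\rho_\infty,t_\infty$ are completely contractive, I would first replace $S$ by an element $S'\in\bM_d(\T^0_{X_G})$ with $\|S-S'\|<\eps/2$ all of whose coefficients lie in $c_{00}(V)$ and $c_{00}(E)$ (these being dense in $c_0(V)$ and $X_G$). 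Choosing a finite set $F\subset V$ containing the supports of all scalar coefficients together with both endpoints $s(e),r(e)$ of every edge occurring, and letting $H=(E_F,F,s,r)$ be the corresponding finite full subgraph, I can regard $S'$ as an element of $\bM_d(\T^0_{X_H})$, where now $c_0(F)$ is a \emph{finite-dimensional} $\rC^*$-algebra.

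The next point is that this passage to $H$ preserves norms: the map $\iota:\T^+_{X_H}\to\T^+_{X_G}$ determined on generators by $t_\infty(x)\mapsto t_\infty(x)$ for $x\in X_H$ and $\rho_\infty(a)\mapsto\rho_\infty(a)$ for $a\in c_0(F)$ is completely isometric. Indeed, $(\rho_\infty^G|_{c_0(F)},t_\infty^G|_{X_H})$ is an isometric representation of $X_H$: covariance is inherited from $(\rho_\infty^G,t_\infty^G)$, and the relation $t_\infty(x_1)^*t_\infty(x_2)=\rho_\infty(\langle x_1,x_2\rangle)$ survives because for $x_1,x_2\in X_H$ the inner product $\langle x_1,x_2\rangle$ is the same whether computed in $X_H$ or in $X_G$. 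Hence \cite[Theorem 3.10]{MS1998} produces a completely contractive $\iota$. A completely contractive left inverse is furnished by compression to the Fock subspace $\F_{X_H}\subset\F_{X_G}$, which the fullness condition $E_F=\{e:s(e),r(e)\in F\}$ renders invariant for the relevant generators and on which these generators restrict to the generators of $\T^+_{X_H}$. Since the compression $C$ satisfies $C\circ\iota=\id$, the map $\iota$ is completely isometric, and in particular the norm of $S'$ is the same in $\T^+_{X_H}$ as in $\T^+_{X_G}$.

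Because $c_0(F)$ is finite-dimensional, Theorem~\ref{T:fdcorrespondence} applies to $X_H$ and supplies a finite-dimensional $\fF$ and a completely contractive homomorphism $\pi_0:\T^+_{X_H}\to B(\fF)$ with $\|\pi_0^{(d)}(S')\|=\|S'\|$, whose underlying completely contractive covariant representation I write as $(\rho^0,t^0)=(\pi_0\circ\rho_\infty,\pi_0\circ t_\infty)$. The decisive step, and the one I expect to be the main obstacle, is to \emph{extend} this finite-dimensional representation from $\T^+_{X_H}$ to all of $\T^+_{X_G}$, rather than merely obtaining a representation of the subalgebra. I would do so by cutting down along $F$: set $\hat\rho(a)=\rho^0(a|_F)$ for $a\in c_0(V)$ and $\hat t(x)=t^0(x|_{E_F})$ for $x\in X_G$. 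That $\hat\rho$ is a $*$-homomorphism and $\hat t$ a completely contractive bimodule map over it is a direct verification, using that $e\in E_F$ forces $s(e),r(e)\in F$, so that restriction to $F$ and to $E_F$ intertwines the two module actions. The delicate point is the contractivity hypothesis of \cite[Theorem 3.10]{MS1998}, namely that $x\otimes h\mapsto\hat t(x)h$ be a contraction on $X_G\otimes_{\hat\rho}\fF$; this is where the non-isometric nature of the cut-down must be handled. It is valid because for every $x\in X_G$ one has $\langle x|_{E_F},x|_{E_F}\rangle_{X_H}\le\langle x,x\rangle_{X_G}|_F$ — the $X_G$-inner product at a vertex $v\in F$ sums over \emph{all} edges with source $v$, including those whose range leaves $F$ — so that, $\rho^0$ being order-preserving, the map $x\otimes h\mapsto(x|_{E_F})\otimes h$ is a contraction $X_G\otimes_{\hat\rho}\fF\to X_H\otimes_{\rho^0}\fF$ through which $\hat t$ factors. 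Then \cite[Theorem 3.10]{MS1998} yields a completely contractive homomorphism $\pi:\T^+_{X_G}\to B(\fF)$ with $\pi\circ\iota=\pi_0$, whence $\pi^{(d)}(S')=\pi_0^{(d)}(S')$ and so $\|\pi^{(d)}(S')\|=\|S'\|$. Consequently $\|\pi^{(d)}(S)\|\ge\|\pi^{(d)}(S')\|-\|S-S'\|\ge\|S\|-2\|S-S'\|>\|S\|-\eps$, which completes the argument.
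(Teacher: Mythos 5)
Your proof is correct, and it follows the same overall strategy as the paper's: approximate $S$ by an element supported on a finite full subgraph $H=(E_F,F,s,r)$, invoke Theorem \ref{T:fdcorrespondence} for the correspondence $X_H$ over the finite-dimensional algebra $c_0(F)$, and control the passage between $\T^+_{X_H}$ and $\T^+_{X_G}$ via restriction (cut-down) maps. The implementation differs in two respects worth recording. First, where you prove that the inclusion $\iota:\T^+_{X_H}\to\T^+_{X_G}$ is completely isometric by compressing to the complemented Fock submodule $\F_{X_H}\subset\F_{X_G}$ (a one-sided inverse argument), the paper instead proves that the cut-down homomorphism $\pi_F:\T^+_{X_G}\to\T^+_{X_H}$ is completely isometric on the subalgebra $\T_F$ generated by $\rho_\infty(c_0(F))$ and $t_\infty(X_H)$, using the invariant subspace $\D=\ol{\rho_\infty(c_0(F))\F_{X_G}}$ (which contains paths that leave $F$) together with approximate-identity computations; the two devices do the same job, and yours is arguably the more transparent of the two. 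Second, the paper assembles a single global map $\oplus_F\,\pi_F$ over all finite $F\subset V$ and checks complete isometry by density, whereas you work one element at a time through a norm-attainment criterion; these organizations are equivalent. A genuine merit of your write-up is that you isolate and actually verify the contractivity hypothesis of \cite[Theorem 3.10]{MS1998} for the cut-down pair $(\hat\rho,\hat t)$ --- the point the paper passes over with an ``it is easily seen'' when constructing $\pi_F$. One small point to tighten: contractivity of $x\otimes h\mapsto (x|_{E_F})\otimes h$ on $X_G\otimes_{\hat\rho}\fF$ requires the matricial form of your inequality, namely positivity of $\bigl[\langle x_i,x_j\rangle_{X_G}|_F-\langle x_i|_{E_F},x_j|_{E_F}\rangle_{X_H}\bigr]_{i,j}$ in $\bM_n(c_0(F))$, not only the diagonal case $i=j$; this is immediate, however, since at each vertex $v\in F$ the difference is the Gram matrix $\bigl[\sum_{e\in s^{-1}(v)\setminus E_F}\ol{x_i(e)}\,x_j(e)\bigr]_{i,j}$, and the $*$-homomorphism $\rho^0$ is completely positive.
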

\begin{proof}
We use the same notation as in the discussion preceding the theorem. Fix a finite subset $F\subset V$ and let 
\[
\rho_\infty^F:c_0(F)\to \L(\F_{X_{H}}) \qand t_\infty^F:X_{H}\to \L(\F_{X_{H}})
\]
denote the usual maps giving rise to the tensor algebra of $X_H$. Let $\gamma_F :c_0(V)\to c_0(F)$ and $\delta_F:X_G\to X_H$ be the projections onto the appropriate submodule obtained via restriction, as above.
It is easily seen that
\[
\gamma_F(a)\delta_F(x)=\delta_F(a\cdot x), \quad \delta_F(x)\gamma_F(a)=\delta_F(x\cdot a)
\]
for every $a\in c_0(V)$ and $x\in X_G$. 
By \cite[Theorem 3.10]{MS1998}, we find a completely contractive homomorphism $\pi_F:\T^+_{X_G}\to \T^+_{X_F}$ such that
\[
\pi_F(\rho_\infty(a)) =\rho^F_\infty(\gamma_F(a)), \quad \pi_F(t_\infty(x))=t^F_\infty( \delta_F(x) )
\]
for every $a\in c_0(V)$ and $x\in X_G$. 

Next, let $\D=\ol{\rho_\infty(c_0(F))\F_{X_G}}$. Since $c_0(F)$ is an ideal in $c_0(V)$, we infer that  $\D$ is invariant for $\rho_\infty(c_0(V))$, and that the map 
\[
b\mapsto b|_\D, \quad b\in c_0(F)
\]
is a non-degenerate $*$-homomorphism. We claim now that $\D$ is invariant for $t_\infty(X_F)$ and $t_\infty(X_F)^*$. To see this, first note that if $x\in X_F$ and $a\in c_0(V)$, then  
\[
a\cdot x=\gamma_F(a) \cdot x, \quad x\cdot  a=x\cdot \gamma_F(a)
\]
by definition of $E_F$. Let  $(e_\lambda)_{\lambda \in \Lambda}$ be a contractive approximate identity for $c_0(V)$ and recall that $\rho_\infty$ is non-degenerate so $\rho_\infty(c_0(V))\F_{X_G}$ is dense in $\F_{X_G}$. In particular, this means that 
\[
\lim_{\lambda\in \Lambda}\rho_\infty(e_\lambda) h=h
\]
 for every $h\in \F_{X_G}$. Thus, for $y\in X_F$ and $h\in\F_{X_G}$ we find
\begin{align*}
t_\infty(y)h&=\lim_{\lambda\in \Lambda} \rho_\infty(e_\lambda)t_\infty(y)h=\lim_{\lambda\in \Lambda} t_\infty(e_\lambda \cdot y)h\\
&=\lim_{\lambda\in \Lambda} t_\infty(\gamma_F(e_\lambda) \cdot y)h=\lim_{\lambda\in \Lambda}\rho_\infty(\gamma_F(e_\lambda)) t_\infty(y)h
\end{align*}
and
\begin{align*}
t_\infty(y)^*h&=\lim_{\lambda\in \Lambda} \rho_\infty(e_\lambda)t_\infty(y)^*h=\lim_{\lambda\in \Lambda}( t_\infty(y)\rho_\infty(e_\lambda^*))^*h\\
&=\lim_{\lambda\in \Lambda}t_\infty(y\cdot e_\lambda^*)^* h=\lim_{\lambda\in \Lambda}t_\infty(y\cdot \gamma_F(e_\lambda^*))^*h\\
&=\lim_{\lambda\in \Lambda}(t_\infty(y)\rho_\infty(\gamma_F(e_\lambda^*)))^*h\\
&=\lim_{\lambda\in \Lambda}\rho_\infty(\gamma_F(e_\lambda)) t_\infty(y)^*h
\end{align*}
whence $t_\infty(y)h,t_\infty(y)^*h\in \D$. We conclude that 
\[
t_\infty(X_F)\F_{X_G}\subset \D \qand t_\infty(X_F)^*\F_{X_G}\subset \D
\]
so in particular $\D$ is invariant for  $t_\infty(X_F)$ and $t_\infty(X_F)^*$, and the claim is established.  Invoking \cite[Theorem 3.10]{MS1998} again, we find a completely contractive homomorphism $\sigma_F: \T^+_{X_F}\to \L(\D)$ such that
\[
\sigma_F(\rho^F_\infty(b)) =\rho_\infty(b)|_\D, \quad \sigma_F(t^F_\infty(y))=t_\infty(y)|_\D
\]
for every $b\in c_0(F)$ and $y\in X_F$. Denote by $\T_F\subset \T^+_{X_G}$ the algebra generated by $\rho_\infty(c_0(F))$ and $t_\infty(X_F)$. Then, we find that 
\[
\sigma_F\circ \pi_F(s)=s|_\D
\]
for every $s\in \T_F$.
Moreover, for $s\in \T_F$ we find
\[
s\rho_\infty(e_\lambda)=s\rho_\infty(\gamma_F(e_\lambda))
\]
for every $\lambda\in \Lambda$. Since $\rho_\infty(c_0(V))\F_{X_G}$ is dense in $\F_{X_G}$, we conclude that the map
\[
s\mapsto s|_\D, \quad s\in \T_F
\]
is completely isometric, and thus $\sigma_F\circ \pi_F$ is completely isometric on $\T_F$. In turn, this forces $\pi_F$ to be completely isometric on $\T_F$.

Finally, define 
\[
\pi:\T^+_{X_G}\to \prod_{F\subset V \textrm{finite}}\T^+_{X_F}
\]
as
\[
\pi(s)=\bigoplus_{F\subset V \textrm{finite}} \pi_F(s), \quad s\in \T^+_{X_G}.
\]
Clearly, $\pi$ is a completely contractive homomorphism. For each finite subset $F\subset V$, the $\rC^*$-algebra $c_0(F)$ is finite-dimensional, whence $\T^+_{X_F}$ is RFD by Theorem \ref{T:fdcorrespondence}. Thus, to complete the proof it suffices to show that $\pi$ is completely isometric. To see this, fix $S\in \bM_d(\T^+_{X_G})$ and $\eps>0$. Note now that
\[
c_0(V)=\ol{\bigcup_{F\subset V \textrm{finite}}c_0(F)}, \quad X_G=\ol{\bigcup_{F\subset V \textrm{finite}}X_F}.
\]
Thus, we may find a finite subset $F\subset V$ and an element $S'\in \bM_d(\T_F)$ such that $\|S-S'\|<\eps$.
Using that $\pi$ is completely isometric on $\T_F$, we obtain
\[
\|\pi_F^{(d)}(S)\|\geq \|\pi^{(d)}(S')\|-\eps=\|S'\|-\eps\geq \|S\|-2\eps.
\]
This shows that $\pi$ is completely isometric.
\end{proof}

We have demonstrated that there is a large class of tensor algebras of C$^*$-correspondences that are RFD. Ultimately, one would wish to determine whether it is enough for the $\rC^*$-algebra $\fA$ to be RFD in order for the tensor algebra of a $\rC^*$-correspondence over $\fA$ to be RFD. We end this section with one more positive step towards answering this question.

\begin{theorem}\label{T:X=C}
Let $\fA$ be a $\rC^*$-algebra which we view as a $\rC^*$-correspondence over itself. Then, $\rC^*_e(\T_{\fA}^+)$ is RFD if and only if $\fA$ is RFD.
\end{theorem}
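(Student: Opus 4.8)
The plan is to reduce the statement to a clean permanence fact about RFD $\rC^*$-algebras, once the $\rC^*$-envelope has been identified explicitly. First I would identify $\rC^*_e(\T_\fA^+)$. When $X=\fA$ is the identity correspondence over $\fA$ (left action by left multiplication, inner product $\langle x,y\rangle=x^*y$), the tensor algebra $\T_\fA^+$ is the semicrossed product of $\fA$ by the trivial $\bZ_+$-action. It is known that the $\rC^*$-envelope of such a tensor algebra is its Cuntz--Pimsner algebra, which for the identity correspondence is the crossed product $\fA\rtimes_{\id}\bZ$ by the trivial action of $\bZ$. Since a crossed product by a trivial action is a tensor product with the group $\rC^*$-algebra, and $\rC^*(\bZ)\cong \rC(\bT)$ is nuclear, this gives
\[
\rC^*_e(\T_\fA^+)\cong \fA\rtimes_{\id}\bZ\cong \fA\otimes \rC(\bT).
\]
Thus the theorem becomes the assertion that $\fA\otimes \rC(\bT)$ is RFD if and only if $\fA$ is RFD.

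For the forward direction, suppose $\fA\otimes\rC(\bT)$ is RFD. The map $a\mapsto a\otimes 1$ embeds $\fA$ as a $\rC^*$-subalgebra of $\fA\otimes\rC(\bT)$, and restricting a separating family of finite-dimensional $*$-representations of $\fA\otimes\rC(\bT)$ to this subalgebra yields a separating family of finite-dimensional $*$-representations of $\fA$. Hence $\fA$ is RFD; in general, any $\rC^*$-subalgebra of an RFD $\rC^*$-algebra is RFD.

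For the converse I would use the standard fact that the minimal tensor product of two RFD $\rC^*$-algebras is RFD, together with the observation that $\rC(\bT)$, being commutative, is RFD. Concretely, if $\{\pi_\lambda:\fA\to\bM_{r_\lambda}\}$ and $\{\sigma_\mu:\rC(\bT)\to\bM_{s_\mu}\}$ are separating families of finite-dimensional representations, then $\bigoplus_\lambda \pi_\lambda$ and $\bigoplus_\mu\sigma_\mu$ are faithful, hence so is their tensor product on the minimal tensor product; decomposing it as $\bigoplus_{\lambda,\mu}\pi_\lambda\otimes\sigma_\mu$ exhibits a separating family of finite-dimensional representations of $\fA\otimes\rC(\bT)$. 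Since $\rC(\bT)$ is nuclear, the tensor product is unambiguous, and it is RFD whenever $\fA$ is.

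The main obstacle is the identification of the $\rC^*$-envelope in the first step: verifying that $\rC^*_e(\T_\fA^+)$ is the crossed product $\fA\rtimes_{\id}\bZ$ (rather than, say, the Toeplitz algebra) requires invoking the theory of $\rC^*$-envelopes of tensor algebras and semicrossed products, and some care is needed concerning whether $\fA$ is assumed unital. Once the envelope is identified with $\fA\otimes\rC(\bT)$, both implications are routine consequences of the behaviour of residual finite-dimensionality under passage to $\rC^*$-subalgebras and to minimal tensor products with nuclear RFD algebras.
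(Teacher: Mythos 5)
Your proof is correct and follows essentially the same route as the paper's: both identify $\rC^*_e(\T_{\fA}^+)$ with $\fA\otimes\rC(\bT)$ (the paper via the semicrossed-product picture $\T_{\fA}^+\cong \fA\times_{\id}\bZ^+$ and the known computation of its $\rC^*$-envelope, you via the Cuntz--Pimsner/crossed-product picture, which is the same fact), and then both directions reduce to permanence of residual finite-dimensionality under passing to subalgebras and under tensoring with the nuclear RFD algebra $\rC(\bT)$. The only cosmetic difference is in the forward direction, where the paper avoids the envelope identification altogether by noting that $\fA\cong\rho_\infty(\fA)$ sits inside $\T_{\fA}^+$, which in turn sits inside $\rC^*_e(\T_{\fA}^+)$, whereas you embed $\fA$ via $a\mapsto a\otimes 1$; both are equally valid.
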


\begin{proof}
Assume first that $\rC^*_e(\T_{\fA}^+)$ is RFD. Therefore, $\T_{\fA}^+$ is also RFD. It is easily verified that $\rho_\infty$ is injective in this situation, so that $\fA\cong \rho_\infty(\fA)$. Since $\rho_\infty(\fA)$ is a subalgebra of $\T_{\fA}^+$, we infer that $\fA$ is RFD. Conversely, assume that $\fA$ is RFD. In \cite[Example 2.6]{MS1998} it is pointed out that $\T_{\fA}^+$ is completely isometrically isomorphic to $\fA \times_{\id} \mathbb Z^+$. In turn, by \cite[Corollary 6.9]{MS1998} (see also \cite[Theorem 4.1]{DFK2017survey}), we obtain
\[
\rC^*_e(\T_{\fA}^+)\cong \fA \otimes_{\max} \rC(\bT).
\]
Using that $\rC(\bT)$ is a nuclear $\rC^*$-algebra and invoking \cite[Proposition 12.5]{paulsen2002}, we infer
\[
\rC^*_e(\T_{\fA}^+)\cong \rC(\bT; \fA).
\]
Now, $\fA$ is assumed to be RFD, and thus so is  $\rC(\bT; \fA)$. We conclude that $\rC^*_e(\T_{\fA}^+)$ is RFD.

 \end{proof}


\section{Residual finite-dimensionality of the maximal $\rC^*$-cover}\label{S:C*max}

In this section, we explore the residual finite-dimensionality of the maximal $\rC^*$-cover of an operator algebra $\A$. Since the embedding $\mu:\A\to \rC^*_{\max}(\A)$ is a completely isometric homomorphism,  $\A$ being RFD is a necessary condition for $\rC^*_{\max}(\A)$ to be RFD. In light of \cite[Theorem 4.1]{pestov1994}, this shows that residual finite-dimensionality of the maximal $\rC^*$-cover is more nuanced than that of the free $\rC^*$-algebra of an operator space. We begin our analysis by considering the case where $\A$ is finite-dimensional. For convenience, we let $\bC\langle \A,\A^*\rangle\subset \rC^*_{\max}(\A)$ denote the linear span of words in the elements of $\mu(\A)\cup \mu(\A)^*$.  Recall that $\rC^*_{\max}(\A)=\rC^*(\mu(\A))$, so that $\bC\langle \A,\A^*\rangle$ is dense in $\rC^*_{\max}(\A)$. The following argument is very similar to that used in the proof of Theorem \ref{T:fdcorrespondence}.

\begin{theorem}\label{T:C*maxfdimA}
Let $\A$ be a finite-dimensional operator algebra. Then, for every $s\in \bC\langle \A,\A^*\rangle$ there is a finite-dimensional Hilbert space $\fF$ and a $*$-homomorphism $\pi:\rC^*_{\max}(\A)\to B(\fF)$ with the property that $\|\pi(s)\|=\|s\|$. In particular, the algebra $\rC^*_{\max}(\A)$ is RFD.

\end{theorem}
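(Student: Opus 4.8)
The plan is to mirror the structure of the proof of Theorem \ref{T:fdcorrespondence}, since the author explicitly flags the analogy. Given $s\in \bC\langle \A,\A^*\rangle$, write it as a finite linear combination of words in $\mu(\A)\cup\mu(\A)^*$; the key point is that each such word is a product of finitely many factors, each of which is either an element of $\mu(\A)$ or the adjoint of such an element. First I would select, via a standard argument, a state $\psi$ on $\rC^*_{\max}(\A)$ achieving $\psi(s^*s)=\|s\|^2$, and pass to the GNS representation $\sigma_\psi:\rC^*_{\max}(\A)\to B(\fH_\psi)$ with cyclic unit vector $\xi_\psi$, so that $\|\sigma_\psi(s)\xi_\psi\|=\|s\|$.

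The next step is to compress $\sigma_\psi$ to a finite-dimensional subspace without disturbing the value $\sigma_\psi(s)\xi_\psi$. Here is where I would invoke Lemma \ref{L:fdiminvsub}, but applied to the finite-dimensional operator algebra $\sigma_\psi(\mu(\A))\subset B(\fH_\psi)$ together with the finite set of ``letters'' appearing in $s$. The subtlety is that the words defining $s$ involve both $\mu(\A)$ and $\mu(\A)^*$, whereas Lemma \ref{L:fdiminvsub} only guarantees an invariant subspace $\fF$ for the algebra, which lets one insert projections $P_{\fF}$ between consecutive factors that read from the left into $\fF$. To handle the adjoint letters symmetrically, I would apply the lemma to the algebra generated by both $\sigma_\psi(\mu(\A))$ and $\sigma_\psi(\mu(\A))^*$ so that $\fF$ is invariant for the $*$-algebra, making $P_{\fF}$ commute appropriately with compressions of both $a$ and $a^*$; then the compressed map $\pi(\cdot)=P_{\fF}\sigma_\psi(\cdot)|_{\fF}$ restricts to a genuine unital $*$-homomorphism on the finite-dimensional $\rC^*$-algebra it generates.

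With $\fF$ finite-dimensional and invariant for the relevant $*$-algebra, I would define $\pi:\rC^*_{\max}(\A)\to B(\fF)$ as the compression $\pi(t)=P_{\fF}\sigma_\psi(t)|_{\fF}$, and verify it is multiplicative on the generators by the telescoping-projection identity from Lemma \ref{L:fdiminvsub}. Since $\fF$ is invariant for the $*$-algebra generated by $\sigma_\psi(\mu(\A))$, the map $\pi$ is a unital $*$-homomorphism on $\rC^*_{\max}(\A)$, and by construction $\pi(s)\xi_\psi=\sigma_\psi(s)\xi_\psi$, whence $\|\pi(s)\|\geq\|\sigma_\psi(s)\xi_\psi\|=\|s\|$; complete contractivity gives the reverse inequality, so $\|\pi(s)\|=\|s\|$. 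The ``In particular'' conclusion that $\rC^*_{\max}(\A)$ is RFD then follows by taking the direct sum over all $s$ in a dense subset: since $\bC\langle\A,\A^*\rangle$ is dense in $\rC^*_{\max}(\A)$ and each element's norm is attained by a finite-dimensional $*$-representation, the resulting direct sum is a separating (hence completely isometric) family of finite-dimensional $*$-representations, exactly as in the passage from Theorem \ref{T:fdimRFDnorm} to Corollary \ref{C:fdimRFD}.

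The main obstacle I anticipate is the adjoint issue: making sure that compressing to $\fF$ yields a bona fide $*$-homomorphism rather than merely a completely positive map. This is precisely why invariance of $\fF$ for the full $*$-algebra (not just $\A$) is essential, and why the analogy with Theorem \ref{T:fdcorrespondence} is not perfectly tight there—in the correspondence setting the left action $\rho_\infty$ was self-adjoint and only $\fF$-invariance for $\tau(\rho_\infty(\fA))$ was needed, with the creation operators handled via the module relations. Here, one must verify that $P_{\fF}$ can be inserted between any two consecutive letters of a word drawn from $\mu(\A)\cup\mu(\A)^*$, which requires $\fF$ to be invariant under the whole generated $*$-algebra; establishing this invariance while keeping $\fF$ finite-dimensional is the crux, and it is exactly what a suitable application of Lemma \ref{L:fdiminvsub} delivers.
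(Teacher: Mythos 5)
Your overall skeleton (GNS state, Lemma \ref{L:fdiminvsub}, compression to a finite-dimensional subspace, norm attainment, density) matches the paper's, but the step where you handle the adjoint letters contains a genuine gap, and it is exactly the step where the paper's proof does something different. You propose to apply Lemma \ref{L:fdiminvsub} to the algebra generated by $\sigma_\psi(\mu(\A))$ \emph{and} $\sigma_\psi(\mu(\A))^*$, so as to obtain a finite-dimensional subspace $\fF$ invariant for the whole $*$-algebra. This is not available: Lemma \ref{L:fdiminvsub} requires the algebra to be finite-dimensional (its proof takes $\fF=\ol{\fF_0+\spn\A\fF_0}$, which is finite-dimensional only because $\A$ is), and the $*$-algebra generated by a finite-dimensional non-selfadjoint operator algebra is typically infinite-dimensional --- this is the very phenomenon of Example \ref{E:fdoa} that the paper keeps emphasizing. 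Worse, what you are asking for need not exist at all: a finite-dimensional subspace invariant under a $*$-algebra is reducing, and if the GNS representation $\sigma_\psi$ is irreducible on an infinite-dimensional space (as happens for a pure state in general), there is no nonzero finite-dimensional reducing subspace whatsoever.

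The paper's proof avoids this by never requiring $\pi$ to be a compression of $\sigma$ on all of $\rC^*_{\max}(\A)$. One takes $\fF$ invariant only for $\sigma(\A)$; the letters fed into Lemma \ref{L:fdiminvsub} are allowed to be the adjoints $\sigma(a)^*$, since the lemma permits arbitrary elements of $B(\fH)$ as letters, while the invariance is demanded only for the finite-dimensional algebra $\sigma(\A)$. Then $a\mapsto P_{\fF}\sigma(a)|_{\fF}$ is a completely contractive homomorphism of $\A$, and the \emph{universal property of $\rC^*_{\max}(\A)$} produces a $*$-homomorphism $\pi:\rC^*_{\max}(\A)\to B(\fF)$ agreeing with this compression on $\A$. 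Since compression to any subspace intertwines adjoints, i.e. $\bigl(P_{\fF}\sigma(a)|_{\fF}\bigr)^*=P_{\fF}\sigma(a)^*|_{\fF}$, the map $\pi$ also agrees with the compression on $\A^*$, and the telescoping identity of Lemma \ref{L:fdiminvsub} then yields $\pi(s)\xi=\sigma(s)\xi$, hence $\|\pi(s)\|=\|s\|$. The universal property --- nowhere invoked in your proposal --- is the missing idea: it is what converts invariance for the non-selfadjoint algebra alone into a genuine $*$-homomorphism, playing here the role that \cite[Theorem 3.10]{MS1998} played in Theorem \ref{T:fdcorrespondence}. Your final density and direct-sum argument for the RFD conclusion is fine as stated.
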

\begin{proof}
Upon identifying $\A$ with $\mu(\A)$, we may assume that $\A\subset \rC^*_{\max}(\A)$. Assume further that $\rC^*_{\max}(\A)$ is concretely represented on some Hilbert space $\fH$. Fix $s\in\bC\langle \A,\A^*\rangle$. We may write
\[
s=\sum_{j=1}^r c^{(j)}_1 c^{(j)}_2\cdots c^{(j)}_{N_j}
\]
where $c^{(j)}_k\in \A\cup \A^*$ for $1\leq k\leq N_j, 1\leq j\leq r$. 
Let $\psi$ be a state on $\rC^*_{\max}(\A)+\bC I_{\fH}$ such that $\psi(s^*s)=\|s\|^2$. Let $\sigma:\rC^*_{\max}(\A)+\bC I_{\fH}\to B(\fH_\psi)$ be the associated GNS representation with cyclic unit vector $\xi\in \fH_\psi$. Then, we see that
\[
\|\sigma(s)\xi\|^2=\psi(s^*s)=\|s\|^2.
\]
By Lemma \ref{L:fdiminvsub}, there is a finite-dimensional subspace $\fF\subset \fH_\psi$ containing $\xi$ that is invariant for $\sigma(\A)$ and such that 
\[
\sigma(s)\xi=\sum_{j=1}^r P_{\fF}\sigma(c^{(j)}_1)P_{\fF} \sigma(c^{(j)}_2)P_{\fF}\cdots P_{\fF}\sigma(c^{(j)}_{N_j})\xi.
\]
Since $\fF$ is invariant for $\sigma(\A)$, the map
\[
a\mapsto P_{\fF}\sigma(a)|_{\fF}, \quad a\in \A
\]
is a completely contractive homomorphism, whence there is a $*$-homomorphism $\pi:\rC^*_{\max}(\A)\to B(\fF)$ such that
\[
\pi(a)=P_{\fF}\sigma(a)|_{\fF}, \quad a\in \A.
\]
In particular, we see that
\[
\pi(a^*)=P_{\fF}\sigma(a)^*|_{\fF}, \quad a\in \A
\]
and since each element $c^{(j)}_k$ belongs to $\A\cup \A^*$, we obtain
\begin{align*}
\pi(s)\xi&=\sum_{j=1}^r \pi(c^{(j)}_1)\pi(c^{(j)}_2)\cdots \pi(c^{(j)}_{N_j})\xi\\
&=\sum_{j=1}^r P_{\fF}\sigma(c^{(j)}_1)P_{\fF} \sigma( c^{(j)}_2)P_{\fF}\cdots P_{\fF}\sigma(c^{(j)}_{N_j})\xi\\
&=\sigma(s)\xi.
\end{align*}
This means that
\[
\|\pi(s)\|\geq \|\sigma(s) \xi\|=\|s\|
\]
whence $\|\pi(s)\|=\|s\|$, which establishes the first statement. The second follows immediately, since $\bC\langle \A,\A^*\rangle$ is dense in $\rC^*_{\max}(\A)$.

\end{proof}

The following fact, which is most likely known to experts, provides motivation for what is to come.

\begin{theorem}\label{T:C*maxdirectsum}
For each $n\in \bN$, let $\A_n$ be a unital operator algebra and let $\A=\oplus_{n=1}^\infty \A_n$. Then, $\rC^*_{\max}(\A)\cong \oplus_{n=1}^\infty \rC^*_{\max}(\A_n)$.
\end{theorem}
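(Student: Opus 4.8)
The plan is to show that the $\rC^*$-algebra $\B:=\oplus_{n=1}^\infty \rC^*_{\max}(\A_n)$, where the direct sum is taken in the $c_0$-sense (as for $\fL$ in Section \ref{S:RFD}), enjoys the universal property that characterizes $\rC^*_{\max}(\A)$, and then to invoke the essential uniqueness of the maximal $\rC^*$-cover. Write $\mu_n:\A_n\to \rC^*_{\max}(\A_n)$ for the canonical embeddings and $\iota_n:\A_n\to \A$ for the completely isometric inclusion of $\A_n$ as the $n$-th summand. First I would assemble the map $\nu:\A\to \B$ defined by $\nu((a_n)_n)=(\mu_n(a_n))_n$; since each $\mu_n$ is completely isometric and $\|\mu_n(a_n)\|=\|a_n\|\to 0$ for $(a_n)_n\in \A$, this lands in $\B$ and is readily seen to be a completely isometric homomorphism, because both direct sums carry the supremum norm at each matricial level. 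That $\nu(\A)$ generates $\B$ as a $\rC^*$-algebra follows because the coordinate-supported elements $\nu(\iota_n(\A_n))$ generate $\rC^*_{\max}(\A_n)$ inside the $n$-th summand, and the finitely-supported elements are dense in the $c_0$-sum $\B$.

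The substance of the argument is verifying the universal factorization property. Given a completely contractive homomorphism $\phi:\A\to B(\fH_\phi)$, I would set $\phi_n:=\phi\circ\iota_n:\A_n\to B(\fH_\phi)$, which is completely contractive, and apply the universal property of $\rC^*_{\max}(\A_n)$ to obtain $*$-homomorphisms $\pi_n:\rC^*_{\max}(\A_n)\to B(\fH_\phi)$ with $\pi_n\circ\mu_n=\phi_n$. The goal is then to combine the $\pi_n$ into a single $*$-homomorphism $\pi:\B\to B(\fH_\phi)$ via $\pi((b_n)_n)=\sum_n \pi_n(b_n)$.

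The main obstacle, and the key step, is ensuring that this series is meaningful and multiplicative, which amounts to the ranges of the $\pi_n$ being mutually orthogonal. Here I would exploit unitality of the $\A_n$: if $e_n$ denotes the unit of $\A_n$, then $\mu_n(e_n)$ is a norm-one idempotent in a $\rC^*$-algebra, hence a projection, and a routine closed $*$-subalgebra argument shows it is in fact the unit of $\rC^*_{\max}(\A_n)$. Consequently $q_n:=\pi_n(\mu_n(e_n))=\phi(\iota_n(e_n))$ is a projection, and since $\iota_m(e_m)\iota_n(e_n)=0$ for $m\neq n$, the projections $q_n$ are mutually orthogonal. As $\pi_n(\rC^*_{\max}(\A_n))\subseteq q_n B(\fH_\phi)q_n$, the images of distinct $\pi_n$ annihilate one another. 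Orthogonality then yields $\|\sum_{n\geq M}\pi_n(b_n)\|=\sup_{n\geq M}\|\pi_n(b_n)\|\leq \sup_{n\geq M}\|b_n\|$, so the defining series converges in norm on $\B$ and the cross terms vanish, making $\pi$ a genuine $*$-homomorphism. Finally, writing $(a_n)_n=\sum_n \iota_n(a_n)$ as a norm-convergent sum in $\A$ and using continuity of $\phi$, I obtain $\pi(\nu((a_n)_n))=\sum_n \phi_n(a_n)=\phi((a_n)_n)$, so that $\pi\circ\nu=\phi$. This shows $(\B,\nu)$ has the universal property of the maximal $\rC^*$-cover and completes the proof.
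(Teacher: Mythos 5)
Your proof is correct, and it reaches the conclusion by a genuinely different route than the paper. The paper never verifies a universal property for $\B=\oplus_{n=1}^\infty\rC^*_{\max}(\A_n)$; instead it constructs two explicit $*$-homomorphisms, namely $\pi:\rC^*_{\max}(\A)\to\B$ (obtained by applying the universal property of $\rC^*_{\max}(\A)$ to your map $\nu$, there called $\phi$) and $\sigma:\B\to\rC^*_{\max}(\A)$ (assembled from maps $\sigma_n:\rC^*_{\max}(\A_n)\to\rC^*_{\max}(\A)$ using the mutually orthogonal projections $P_n=\mu(E_n)$), and then verifies by computations on generators that $\sigma\circ\pi$ and $\pi\circ\sigma$ are the identity. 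You instead prove that $(\B,\nu)$ itself enjoys the defining factorization property of the maximal $\rC^*$-cover and appeal to its essential uniqueness, which the paper's definition grants you. The technical core is shared: in both arguments the units of the summands yield contractive idempotents, hence mutually orthogonal self-adjoint projections, which allow coordinatewise $*$-homomorphisms to be glued into a single $*$-homomorphism on the $c_0$-sum; the paper performs this gluing once, with target $\rC^*_{\max}(\A)$, while you perform it with an arbitrary target $B(\fH_\phi)$, which is exactly what the universal property demands. What your route buys is the elimination of the two inverse-composition computations and of the compressions $P_n\sigma_n(\cdot)P_n$ appearing in the paper's definition of $\sigma$; indeed, your observation that $\mu_n(e_n)$ is the unit of $\rC^*_{\max}(\A_n)$ is precisely what makes those compressions redundant, and some such fact is implicitly needed for the multiplicativity of the paper's $\sigma$ as well. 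The price is that you must check the $\rC^*$-cover conditions for $(\B,\nu)$, namely the complete isometry of $\nu$ and the fact that $\nu(\A)$ generates $\B$, which in the paper's approach follow for free once $\pi$ is known to be a $*$-isomorphism satisfying $\pi\circ\mu=\nu$; both verifications are straightforward and you carried them out correctly, so the argument stands.
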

\begin{proof}
For each $n\in \bN$, the algebra $\A_n$ is naturally embedded in $\A$, and accordingly we let $E_n\in A$ denote the unit of $\A_n$. Given $a\in \A$, we then have that $E_n a E_n\in \A_n$ for each $n\in \bN$ and
\[
a=\oplus_{n=1}^\infty E_n a E_n=\lim_{N\to\infty}\oplus_{n=1}^N E_n a E_n
\]
where the limit exists in the norm topology.
Let 
\[
\mu:\A\to \rC^*_{\max}(\A) \qand \mu_n:\A_n\to \rC^*_{\max}(\A_n)
\]
denote the canonical embeddings. The map 
\[
\phi:\A\to \oplus_{n=1}^\infty \rC^*_{\max}(\A_n)
\]
defined as
\[
\phi(a)=\oplus_{n=1}^\infty \mu_n(E_n a E_n), \quad a\in \A
\]
is a completely contractive homomorphism. Thus, there is a $*$-homomorphism 
\[
\pi:\rC^*_{\max}(\A)\to \oplus_{n=1}^\infty \rC^*_{\max}(\A_n)
\]
with the property that $\pi\circ \mu=\phi$. On the other hand, for each $n\in \bN$ we define a completely contractive homomorphism 
\[
\psi_n:\A_n\to \rC^*_{\max}(\A)
\]
as
\[
\psi_n(a)=\mu(a), \quad a\in \A_n.
\]
Thus, there is a $*$-homomorphism 
\[
\sigma_n:\rC^*_{\max}(\A_n)\to \rC^*_{\max}(\A)
\]
such that $\sigma_n\circ \mu_n =\psi_n$. For each $n\in \bN$, put $P_n=\mu(E_n)$ which is a contractive idempotent, and thus a self-adjoint projection. Then, $\{P_n:n\in \bN\}$ is a collection of pairwise orthogonal  projections in $\rC^*_{\max}(\A)$. We obtain a $*$-homomorphism 
\[
\sigma:\oplus_{n=1}^\infty \rC^*_{\max}(\A_n)\to \rC^*_{\max} (\A)
\]
by setting
\[
\sigma(\oplus_{n=1}^\infty t_n)=\oplus_{n=1}^\infty P_n \sigma_n(t_n)P_n
\]
for every $\oplus_{n=1}^\infty t_n\in \oplus_{n=1}^\infty \rC^*_{\max}(\A_n)$. For $a\in \A$ we compute
\begin{align*}
(\sigma \circ\pi )(\mu(a))&=\sigma(\phi(a))=\sigma(\oplus_{n=1}^\infty \mu_n(E_na E_n))\\
&=\oplus_{n=1}^\infty P_n \sigma_n ( \mu_n(E_na E_n))P_n=\oplus_{n=1}^\infty P_n \psi_n(E_na E_n)P_n\\
&=\oplus_{n=1}^\infty P_n \mu(E_na E_n)P_n=\oplus_{n=1}^\infty  \mu(E_na E_n)\\
&=\lim_{N\to\infty} \mu\left( \oplus_{n=1}^N E_n a E_n\right)\\
&=\mu(a)
\end{align*}
where the limit exists in the norm topology. Next, for $\oplus_{n=1}^\infty \mu_n(a_n)\in \oplus_{n=1}^\infty \mu_n(\A_n)$ we find
\begin{align*}
(\pi\circ \sigma)(\oplus_{n=1}^\infty \mu_n(a_n))&=\pi\left( \oplus_{n=1}^\infty P_n\psi_n(a_n)P_n\right)\\
&=\pi\left( \oplus_{n=1}^\infty P_n \mu(a_n)P_n\right)=\pi\left( \oplus_{n=1}^\infty  \mu(E_n a_nE_n)\right)\\
&=\pi\left( \oplus_{n=1}^\infty  \mu( a_n)\right)=\lim_{N\to\infty} (\pi\circ \mu)(\oplus_{n=1}^N a_n)\\
&=\lim_{N\to\infty} \phi(\oplus_{n=1}^N a_n)=\phi(\oplus_{n=1}^\infty a_n)\\
&=\oplus_{n=1}^\infty \mu_n(a_n)
\end{align*}
where once again the limit exists in the norm topology. Thus, we conclude that $\pi$ is a $*$-isomorphism.
\end{proof}

One easy consequence goes as follows. 

\begin{corollary}\label{C:RFDdirectsum}
Let $\A$ be an operator algebra which can be written as $\A=\oplus_{n=1}^\infty \A_n$, where $\A_n$ is a unital finite-dimensional operator algebra for each $n\in \bN$. Then, $\rC^*_{\max}(\A)$ is RFD.
\end{corollary}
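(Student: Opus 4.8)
The plan is to assemble the two preceding results into a short argument. First I would apply Theorem \ref{T:C*maxdirectsum}, which furnishes a $*$-isomorphism $\rC^*_{\max}(\A) \cong \oplus_{n=1}^\infty \rC^*_{\max}(\A_n)$. This immediately reduces the corollary to showing that the right-hand side is RFD, so the remaining work is to verify that a direct sum of RFD $\rC^*$-algebras is again RFD, together with the fact that each summand here is RFD.

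For the latter, I would note that for each $n\in\bN$ the algebra $\A_n$ is a finite-dimensional operator algebra, so Theorem \ref{T:C*maxfdimA} guarantees that $\rC^*_{\max}(\A_n)$ is RFD. Concretely, for each $n$ there is a set of positive integers $\{r_\mu : \mu \in \Omega_n\}$ and an injective (equivalently, completely isometric) $*$-homomorphism $\pi_n : \rC^*_{\max}(\A_n) \to \prod_{\mu\in\Omega_n}\bM_{r_\mu}$.

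Finally, I would assemble these into a separating family for the direct sum. Letting $q_n : \oplus_{m=1}^\infty \rC^*_{\max}(\A_m) \to \rC^*_{\max}(\A_n)$ denote the canonical $*$-homomorphism onto the $n$-th summand, the collection $\{\pi_n\circ q_n\}_{n\in\bN}$ consists of $*$-homomorphisms into finite-dimensional $\rC^*$-algebras, and it is separating: if an element $t=(t_n)_n$ lies in the kernel of every $\pi_n\circ q_n$, then $\pi_n(t_n)=0$ for all $n$, whence $t_n=0$ for all $n$ by injectivity of $\pi_n$, and so $t=0$. Taking the product of these maps over all $n$ therefore yields an injective $*$-homomorphism of $\oplus_{n=1}^\infty \rC^*_{\max}(\A_n)$ into a product of matrix algebras, which is exactly the assertion that it is RFD.

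There is no genuine obstacle in this corollary, as it is purely a matter of combining Theorems \ref{T:C*maxdirectsum} and \ref{T:C*maxfdimA}; the only point meriting a word of care is that the direct sum in Theorem \ref{T:C*maxdirectsum} be read consistently (as a $c_0$-direct sum). The separating-family argument above is, however, insensitive to this choice and applies verbatim to either the $c_0$- or the $\ell^\infty$-direct sum, so no further bookkeeping is needed.
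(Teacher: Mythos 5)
Your proof is correct and takes essentially the same route as the paper: the paper's proof is exactly the combination of Theorem \ref{T:C*maxfdimA} applied to each summand with Theorem \ref{T:C*maxdirectsum}, leaving the fact that a direct sum of RFD $\rC^*$-algebras is RFD implicit. Your separating-family argument simply supplies that implicit step, and it is sound (an injective $*$-homomorphism into a product of matrix algebras is automatically completely isometric, so injectivity is all that is needed).
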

\begin{proof}
By Theorem \ref{T:C*maxfdimA}, we see that $\rC^*_{\max}(\A_n)$ is RFD for every $n\in \bN$, whence $\rC^*_{\max}(\A)$ is RFD by Theorem \ref{T:C*maxdirectsum}.
\end{proof}

This corollary says that for special RFD operator algebras $\A$, we can guarantee that $\rC^*_{\max}(\A)$ is RFD. The following example, inspired by multivariate operator theoretic considerations, supports the possibility that this may be a manifestation of a general phenomenon.

\begin{example}\label{E:ncdiscalg}
Let $d\geq 1$ be an integer and let $\bF_d^+$ denote the free semigroup on the generators $\{1,\ldots,d\}$. For each word $w\in \bF_d^+$, we let $\delta_w\in \ell^2(\bF_d^+)$ denote the characteristic function of $\{w\}$. Thus, $\{\delta_w:w\in \bF_d^+\}$ is an orthonormal basis of $\ell^2(\bF_d^+)$.
For each $1\leq k\leq d$, we define an isometry $L_k\in B(\ell^2(\bF_d^+))$ such that $L_k \delta_w=\delta_{kw}$ for every $w\in \bF_d^+$. We note that 
\[
\sum_{k=1}^d L_k L_k^*\leq I.
\] 
Then,  \emph{Popescu's disc algebra} $\fA_d\subset B(\ell^2(\bF_d^+))$ is the norm closed unital operator algebra generated by $L_1,\ldots,L_d$ \cite{popescu1991}. When $d=1$, the algebra $\fA_1$ can be identified with the classical disc algebra, consisting of those continuous functions on the closed complex unit disc that are holomorphic in the interior. 

We claim first that $\fA_d$ is RFD. To see this, note that $\fA_d$ can be identified with the tensor algebra of the $\rC^*$-correspondence $\bC^d$ over $\bC$ \cite[Example 2.7]{MS1998}. Thus, $\fA_d$ is RFD by virtue of Theorem \ref{T:fdcorrespondence}.
Next, we proceed to verify that $\rC^*_{\max}(\fA_d)$ is RFD. For that purpose, we will need the following important universality property of $\fA_d$. Let $\fH$ be a Hilbert space and let $T_1,\ldots,T_d\in B(\fH)$ be operators such that
\[
\sum_{k=1}^d T_kT_k^*\leq I. 
\]
In other words, the row operator $T=(T_1,\ldots,T_d):\fH^{(d)}\to \fH$ is contractive. Then, there is a unital completely contractive homomorphism $\Phi_T:\fA_d\to B(\fH)$ with the property that $\Phi_T(L_k)=T_k$ for every $1\leq k\leq d$ \cite[Theorem 2.1]{popescu1996}. Using that $(L_1,\ldots,L_d)$ is contractive, it is now straightforward to verify that $\rC^*_{\max}(\fA_d)$ is $*$-isomorphic to the free $\rC^*$-algebra of the operator algebra $\fA_d$. Thus, $\rC^*_{\max}(\fA_d)$ is RFD by \cite[Theorem 4.1]{pestov1994}.
\qed
\end{example}

Unfortunately, the general situation is more complicated and we briefly  indicate why by discussing two related settings.

When $d>1$, Popescu's disc algebra $\fA_d$ can be checked to be non-commutative. Notably, there is commutative version of it that acts on the symmetric Fock space over $\bC^d$ instead of the full one \cite{arveson1998},\cite{davidson1998}. It is typically denoted by $\A_d$, and it is easily checked to be RFD; more generally one could also invoke \cite[Example 5.2]{mittal2010}. Much as $\fA_d$ is universal for the so-called row contractions, the algebra $\A_d$ is universal for \emph{commuting} row contractions \cite[Theorem 6.2]{arveson1998}. This commutativity requirement prevents a direct adaptation of the previous argument, and in particular we do not know whether $\rC^*_{\max}(\A_d)$ is RFD when $d>1$.  

Another example is that of the bidisc algebra $A(\bD^2)$. As a uniform algebra,  $A(\bD^2)$ is clearly RFD as its characters completely norm it. Furthermore, a classical inequality due to Ando \cite{ando1963} shows that  $A(\bD^2)$ is universal for pairs of commuting contractions. Once again, this commutativity requirement complicates things and it does not appear to be known whether $\rC^*_{\max}(A(\bD^2))$ is RFD.

In view of these difficulties, we close this section by identifying a condition under which an RFD operator algebra $\A$ has the property that $\rC^*_{\max}(\A)$ is also RFD.  We start by recording an elementary fact.  

\begin{lemma}\label{L:projconv}
Let $\fH$ be a Hilbert space and let $(x_\lambda)_{\lambda\in \Lambda}$ be a net of contractions in $B(\fH)$. Let $\fM\subset \fH$ be a closed subspace which is invariant for  $\{x_\lambda:\lambda\in \Lambda\}$ and such that $\fM^\perp\subset \cap_{\lambda\in \Lambda}\ker x^*_\lambda$. Assume that $(x_\lambda P_{\fM})_{\lambda\in \Lambda}$ converges to $P_{\fM}$ in the weak operator topology. Then, $(x_\lambda)_{\lambda\in \Lambda}$ converges to $P_{\fM}$ in the weak operator topology.
\end{lemma}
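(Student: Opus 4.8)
The plan is to push everything through the block decomposition of each $x_\lambda$ with respect to $\fH = \fM \oplus \fM^\perp$. Writing $x_\lambda = \begin{bmatrix} A_\lambda & B_\lambda \\ C_\lambda & D_\lambda \end{bmatrix}$, the invariance of $\fM$ forces $C_\lambda = 0$, while the hypothesis $\fM^\perp \subset \ker x_\lambda^*$, upon taking adjoints, forces $D_\lambda = 0$. Thus $x_\lambda = \begin{bmatrix} A_\lambda & B_\lambda \\ 0 & 0 \end{bmatrix}$, where $A_\lambda = P_{\fM} x_\lambda|_{\fM}$ and $B_\lambda = P_{\fM} x_\lambda|_{\fM^\perp}$ are contractions. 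A direct computation gives $x_\lambda P_{\fM} = \begin{bmatrix} A_\lambda & 0 \\ 0 & 0 \end{bmatrix}$, so the assumption that $x_\lambda P_{\fM} \to P_{\fM}$ in the weak operator topology is precisely the statement that $A_\lambda \to I_{\fM}$ weakly on $\fM$. Unwinding the desired conclusion, for $\xi,\zeta \in \fM$ and $\eta,\omega \in \fM^\perp$ one has $\langle x_\lambda(\xi\oplus\eta), \zeta\oplus\omega\rangle = \langle A_\lambda \xi + B_\lambda \eta, \zeta\rangle$, so the whole problem reduces to showing that the off-diagonal block satisfies $B_\lambda \to 0$ weakly.

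The first preparatory step is to upgrade the convergence of $A_\lambda$ from weak to strong, which is routine for contractions: for $\xi \in \fM$ one expands $\|A_\lambda \xi - \xi\|^2 = \|A_\lambda \xi\|^2 - 2\re\langle A_\lambda \xi, \xi\rangle + \|\xi\|^2 \le 2\|\xi\|^2 - 2\re\langle A_\lambda \xi, \xi\rangle$, and the right-hand side tends to $0$ by weak convergence. In particular $\|A_\lambda \xi\| \to \|\xi\|$, that is, $\langle (I - A_\lambda^* A_\lambda)\xi, \xi\rangle \to 0$, for every $\xi \in \fM$.

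The heart of the argument, and the step I expect to be the main obstacle, is extracting decay of $B_\lambda$ purely from contractivity, since the hypothesis only constrains the action of $x_\lambda$ on $\fM$ and says nothing directly about $\fM^\perp$. Here I would exploit that $\|x_\lambda\| \le 1$ makes $I - x_\lambda^* x_\lambda$ positive, with block entries $I - A_\lambda^* A_\lambda$, $I - B_\lambda^* B_\lambda$ on the diagonal and $-A_\lambda^* B_\lambda$ off the diagonal. Applying the Cauchy--Schwarz inequality to the positive semidefinite form $(v,w)\mapsto \langle (I - x_\lambda^* x_\lambda)v, w\rangle$ at $v = 0\oplus\eta$ and $w = \xi\oplus 0$ yields
\[
|\langle B_\lambda \eta, A_\lambda \xi\rangle|^2 \le \langle (I - B_\lambda^* B_\lambda)\eta, \eta\rangle \, \langle (I - A_\lambda^* A_\lambda)\xi, \xi\rangle \le \|\eta\|^2 \left(\|\xi\|^2 - \|A_\lambda \xi\|^2\right).
\]
By the previous paragraph the right-hand side tends to $0$, so $\langle B_\lambda \eta, A_\lambda \xi\rangle \to 0$ for all $\xi \in \fM$, $\eta \in \fM^\perp$. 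To convert this into genuine weak convergence of $B_\lambda$, I would split, for $\zeta \in \fM$, the expression $\langle B_\lambda \eta, \zeta\rangle = \langle B_\lambda \eta, A_\lambda \zeta\rangle + \langle B_\lambda \eta, \zeta - A_\lambda \zeta\rangle$: the first term vanishes in the limit by what was just shown (taking $\xi = \zeta$), while the second is bounded by $\|\eta\|\,\|\zeta - A_\lambda \zeta\|$, which tends to $0$ by the strong convergence $A_\lambda \zeta \to \zeta$. Hence $B_\lambda \to 0$ weakly, and combining this with $A_\lambda \to I_{\fM}$ weakly gives $\langle x_\lambda(\xi\oplus\eta), \zeta\oplus\omega\rangle \to \langle \xi, \zeta\rangle = \langle P_{\fM}(\xi\oplus\eta), \zeta\oplus\omega\rangle$, which is the asserted convergence $x_\lambda \to P_{\fM}$ in the weak operator topology.
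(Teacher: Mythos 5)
Your proof is correct, but it follows a genuinely different route from the paper's. The paper argues by soft topology: it invokes WOT-compactness of the closed unit ball, so that it suffices to show $P_{\fM}$ is the \emph{only} WOT cluster point of the net, and then, for a cluster point $x$, it combines the identities $P_{\fM}xP_{\fM}=P_{\fM}$ (from the hypothesis and invariance) and $(I-P_{\fM})x=0$ (from the kernel condition) with the contraction inequality $xx^*\leq I$ to force the corner $P_{\fM}x(I-P_{\fM})$ to vanish. You instead work directly with the net: after the block decomposition $x_\lambda=\left[\begin{smallmatrix} A_\lambda & B_\lambda\\ 0&0\end{smallmatrix}\right]$, you upgrade the weak convergence $A_\lambda\to I_{\fM}$ to strong convergence (the standard estimate for contractions), and then use positivity of $I-x_\lambda^*x_\lambda$ together with the Cauchy--Schwarz inequality for the associated sesquilinear form to show $B_\lambda\to 0$ weakly. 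Both arguments ultimately exploit the same structural fact --- a contraction whose compression to $\fM$ is (close to) the identity must have (small) vanishing adjacent corner --- but the paper applies it qualitatively to the limit point, while you apply it quantitatively to the net itself. The paper's proof is shorter and avoids any estimates; yours avoids appealing to compactness of the unit ball, produces explicit bounds on the off-diagonal decay in terms of $\|\xi\|^2-\|A_\lambda\xi\|^2$, and yields the additional information (not contained in the paper's argument) that $x_\lambda P_{\fM}\to P_{\fM}$ in the \emph{strong} operator topology.
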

\begin{proof}
By compactness of the closed unit ball in the weak operator topology, it suffices to show that $P_{\fM}$ is the only cluster point of $(x_\lambda)_{\lambda\in \Lambda}$ in the weak operator topology.  Let $x\in B(\fH)$ be such a cluster point. We know that $P_{\fM}x_{\lambda}P_{\fM}=x_{\lambda}P_{\fM}$ for every $\lambda\in\Lambda$. Using that  $(x_\lambda P_{\fM})_{\lambda\in \Lambda}$ converges to $P_{\fM}$ in the weak operator topology, we conclude that $P_{\fM}xP_{\fM}=P_{\fM}$. Moreover, $\fM^\perp\subset \ker x^*_\lambda$ so that $x^*_\lambda (I-P_{\fM})=0$ for every $\lambda\in \Lambda$, and thus $(I-P_{\fM})x=0$. Thus, we infer that
\[
x=P_{\fM}x=P_{\fM}xP_{\fM}+P_{\fM} x  (I-P_{\fM})=P_{\fM}+P_{\fM} x  (I-P_{\fM})
\]
and consequently
\[
xx^*=P_{\fM}+P_{\fM} x  (I-P_{\fM}) x^*P_\fM.
\]
On the other hand, since each $x_\lambda$ is a contraction we must have that $xx^*\leq I$ and therefore
\[
P_{\fM}+P_{\fM} x  (I-P_{\fM}) x^*P_\fM \leq P_{\fM}
\]
which forces $P_{\fM} x  (I-P_{\fM})=0$. Consequently, we have $x=P_{\fM}$. 
\end{proof}

Recall now that an operator algebra $\A$ is said to admit a \emph{contractive approximate identity} if there is a net of contractions $(e_\lambda)_{\lambda\in \Lambda}$ in $\A$ such that for every $a\in \A$, the nets $(e_\lambda a)_{\lambda\in \Lambda}$ and $(ae_\lambda)_{\lambda\in \Lambda}$ converge to $a$ in norm.

\begin{theorem}\label{T:idealRFD}
Let $\A$ be an operator algebra. For every $n\in\bN$, let $\J_n\subset \A$ be a closed two-sided ideal of $\A$ with a contractive approximate identity $(e^{(n)}_{\lambda})_{\lambda\in \Lambda_n}$. Assume that for every $a\in \A$ we have
\[
\lim_{n\to\infty}\liminf_{\lambda\in \Lambda_n} \|ae^{(n)}_{\lambda}\|=0 \qand \lim_{n\to\infty}\liminf_{\lambda\in \Lambda_n} \|e^{(n)}_{\lambda}a\|=0.
\]
Assume also that $\A/\J_n$ is finite-dimensional for every $n\in \bN$. 
Then, $\rC^*_{\max}(\A)$ is RFD.
\end{theorem}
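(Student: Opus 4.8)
The plan is to realise $\rC^*_{\max}(\A)$ as a $\rC^*$-subalgebra of the product $\prod_{n=1}^\infty \rC^*_{\max}(\A/\J_n)$. Each quotient $\A/\J_n$ is a finite-dimensional operator algebra, so Theorem \ref{T:C*maxfdimA} guarantees that $\rC^*_{\max}(\A/\J_n)$ is RFD; since a product of RFD $\rC^*$-algebras is RFD and any $\rC^*$-subalgebra of an RFD $\rC^*$-algebra is again RFD, it will suffice to produce an injective (equivalently isometric) $*$-homomorphism of $\rC^*_{\max}(\A)$ into that product. To build it, let $q_n:\A\to\A/\J_n$ be the quotient map and $\mu_n:\A/\J_n\to\rC^*_{\max}(\A/\J_n)$ the canonical embedding. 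The composite $\mu_n\circ q_n$ is a completely contractive homomorphism, so the universal property yields a $*$-homomorphism $\Pi_n:\rC^*_{\max}(\A)\to\rC^*_{\max}(\A/\J_n)$ with $\Pi_n(\mu(a))=\mu_n(q_n(a))$. Setting $\Pi=\oplus_n\Pi_n$ and recalling that $\bC\langle\A,\A^*\rangle$ is dense in $\rC^*_{\max}(\A)$ while each $\Pi_n$ is contractive, the isometry of $\Pi$ reduces to the single estimate $\sup_n\|\Pi_n(s)\|=\|s\|$ for every word $s\in\bC\langle\A,\A^*\rangle$.

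To bound $\|\Pi_n(s)\|$ from below I would represent $\rC^*_{\max}(\A)$ faithfully on a Hilbert space $\fH$, identify $\A$ with $\mu(\A)$, and for each $n$ set $\fM_n=\ol{\mu(\J_n)\fH}$. Because $\J_n$ is a two-sided ideal, $\fM_n$ is invariant for $\mu(\A)$. I would then apply Lemma \ref{L:projconv} to the net of contractions $\mu(e^{(n)}_\lambda)$: its range lies in $\fM_n$ (so $\fM_n^\perp\subset\bigcap_\lambda\ker\mu(e^{(n)}_\lambda)^*$), the subspace $\fM_n$ is invariant for it, and $\mu(e^{(n)}_\lambda)P_{\fM_n}\to P_{\fM_n}$ strongly since $(e^{(n)}_\lambda)$ is an approximate identity for $\J_n$. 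The Lemma identifies the \wot-limit as the projection, $\mu(e^{(n)}_\lambda)\to P_n:=P_{\fM_n}$. Compressing to the co-invariant subspace gives a completely contractive homomorphism $\psi_n:\A\to B(\fM_n^\perp)$, $\psi_n(a)=(I-P_n)\mu(a)|_{\fM_n^\perp}$, which annihilates $\J_n$ (as $\mu(\J_n)\fH\subseteq\fM_n$) and hence extends to a $*$-homomorphism $\td\psi_n:\rC^*_{\max}(\A)\to B(\fM_n^\perp)$ that factors through $\Pi_n$; in particular $\|\td\psi_n(s)\|\le\|\Pi_n(s)\|\le\|s\|$.

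It then remains to show $\|\td\psi_n(s)\|\to\|s\|$ as $n\to\infty$. Writing $Q_n=I-P_n$ and $s=\sum_j c^{(j)}_1\cdots c^{(j)}_{N_j}$ with each letter $c^{(j)}_k\in\mu(\A)\cup\mu(\A)^*$, the compressed word $\td\psi_n(s)=\sum_j Q_nc^{(j)}_1Q_n\cdots Q_nc^{(j)}_{N_j}Q_n$ differs from $Q_nsQ_n$ only by finitely many terms, each of which contains a block $c\,P_n\,c'$ with $c,c'$ letters. This is where the hypotheses are used: since $\mu(a)\mu(e^{(n)}_\lambda)\to\mu(a)P_n$ in the weak operator topology and the norm is \wot-lower semicontinuous, one gets $\|\mu(a)P_n\|\le\liminf_\lambda\|ae^{(n)}_\lambda\|$ and, symmetrically, $\|P_n\mu(a)\|\le\liminf_\lambda\|e^{(n)}_\lambda a\|$. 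The two decay assumptions thus force $\|cP_n\|\to0$ and $\|P_nc\|\to0$ for every letter $c$, whence both $\|s-Q_nsQ_n\|\to0$ and $\|\td\psi_n(s)-Q_nsQ_n\|\to0$. Combining these gives $\|\td\psi_n(s)\|\to\|s\|$, and with $\|\td\psi_n(s)\|\le\|\Pi_n(s)\|\le\|s\|$ we conclude $\sup_n\|\Pi_n(s)\|=\|s\|$, so $\Pi$ is isometric and the theorem follows.

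I expect the delicate point to be the construction in the second step: one must know that the \wot-limit of $\mu(e^{(n)}_\lambda)$ is exactly the orthogonal projection $P_{\fM_n}$ and not merely some contraction, and this is precisely the content of Lemma \ref{L:projconv}; verifying its hypotheses (especially the invariance and co-kernel conditions together with the strong convergence on $\fM_n$) is the technical heart. Once $P_n=P_{\fM_n}$ is in hand, the multiplicativity of the compression $\psi_n$ on the co-invariant subspace and the telescoping estimate for the compressed word are routine, and the norm decay is a soft consequence of \wot-lower semicontinuity and the two hypotheses.
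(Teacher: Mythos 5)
Your proof is correct, and while it shares its technical core with the paper's argument, the final assembly is genuinely different. Both proofs hinge on the same device: applying Lemma \ref{L:projconv} to the approximate identities to identify the \textsc{wot}-limit of $(e^{(n)}_\lambda)_\lambda$ as a projection $P_n$ (you take $\fM_n=\ol{\spn \J_n\fH}$ and compress to the co-invariant subspace $\fM_n^\perp$; the paper takes $\fM_n=\ol{\spn \J_n^*\fH}$ and restricts to the invariant subspace $\fM_n^\perp=\bigcap_{a\in\J_n}\ker a$ --- mirror images of one another), and then using \textsc{wot}-lower semicontinuity of the norm together with the two decay hypotheses to get $\|cP_n\|\to 0$ and $\|P_nc\|\to 0$ for every letter $c\in\mu(\A)\cup\mu(\A)^*$. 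Where you diverge: the paper fixes a word $s$ and $\eps>0$, picks an almost-norming vector, and after compressing invokes Lemma \ref{L:fdiminvsub} to cut down to a finite-dimensional invariant subspace, thereby producing directly a finite-dimensional $*$-representation of $\rC^*_{\max}(\A)$ norming $s$ up to $\eps$. You instead build a global isometric embedding $\rC^*_{\max}(\A)\hookrightarrow\prod_n\rC^*_{\max}(\A/\J_n)$ and quote Theorem \ref{T:C*maxfdimA} for each factor together with the permanence of residual finite-dimensionality under products and $\rC^*$-subalgebras, so the finite-dimensional cut-down enters only indirectly, inside the proof of Theorem \ref{T:C*maxfdimA}. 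Your route is more modular and yields a statement of independent interest (the embedding into the product of the maximal covers of the quotients), while the paper's yields slightly finer information (approximate norm attainment in finite-dimensional representations for every element of $\bC\langle \A,\A^*\rangle$). One ingredient you use silently: for $\rC^*_{\max}(\A/\J_n)$ to be defined and for Theorem \ref{T:C*maxfdimA} to apply, you need to know that $\A/\J_n$, with its quotient matrix norms, is itself an operator algebra. This is true, but it is a theorem (quotients of operator algebras by closed two-sided ideals are operator algebras, a consequence of the Blecher--Ruan--Sinclair characterization), and the paper's proof deliberately sidesteps it by working with the concrete compressed algebra $\rho(\A)\subset B(\fM_m^\perp)$, which is finite-dimensional because it is an algebraic quotient of $\A/\J_m$. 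If you wish to avoid citing that quotient theorem, replace $\rC^*_{\max}(\A/\J_n)$ by $\rC^*_{\max}(\psi_n(\A))$ throughout: $\psi_n(\A)\subset B(\fM_n^\perp)$ is a concrete finite-dimensional operator algebra, your factorization $\td\psi_n=\tau_n\circ\Pi_n$ goes through verbatim, and Theorem \ref{T:C*maxfdimA} applies to it directly.
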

\begin{proof}
The assumptions on $\A$ are invariant under completely isometric isomorphisms, so we may assume that $\A\subset \rC^*_{\max}(\A)\subset B(\fH)$ for some Hilbert space $\fH$. 
Since $\bC\langle \A,\A^*\rangle$ is dense in $\rC^*_{\max}(\A)$, it suffices to fix $s\in \bC\langle \A,\A^*\rangle$ with $\|s\|=1$ and $\eps>0$, and to find a finite-dimensional Hilbert space $\fN$ and a $*$-homomorphism $\pi:\rC^*_{\max}(\A)\to B(\fN)$  such that $\|\pi(s)\|\geq 1-\eps$.
Write
\[
s=\sum_{j=1}^r c^{(j)}_1 c^{(j)}_2\cdots c^{(j)}_{N_j}
\]
where $c^{(j)}_k\in \A\cup \A^*$ for $1\leq k\leq N_j, 1\leq j\leq r$. Choose a unit vectors $\xi\in \fH$ with the property that $\|s\xi\|\geq 1-\eps/2$.
For each $n\in \bN$, we put $\fM_n=\ol{\spn \J_n^*\fH}$.
It is readily verified that that $(e^{(n)*}_\lambda P_{\fM_n})_{\lambda\in \Lambda_n}$ converges in the strong operator topology to $P_{\fM_n}$. Furthermore, we see that
\[
\fM_n^\perp=\cap_{a\in \J_n}\ker a
\]
so by Lemma \ref{L:projconv} we see that $(e^{(n)*}_\lambda)_{\lambda\in \Lambda_n}$ converges in the weak operator topology to $P_{\fM_n}$. This implies that $(e^{(n)}_\lambda)_{\lambda\in \Lambda_n}$ converges in the weak operator topology to $P_{\fM_n}$ as well. Thus, for every $a\in \A$ we find
\[
\lim_{n\to\infty} \|aP_{\fM_n}\|\leq \lim_{n\to\infty} \liminf_{\lambda\in\Lambda_n}\|ae^{(n)}_\lambda\|=0
\]
and
\[
\lim_{n\to\infty}\|P_{\fM_n}a\|\leq\lim_{n\to\infty} \liminf_{\lambda\in\Lambda_n}\|e^{(n)}_\lambda a\|=0
\]
by assumption. 
We may thus choose a positive integer $m$ large enough so that
\[
\left\|s-  \sum_{j=1}^rP_{\fM_{m}^\perp}c^{(j)}_1P_{\fM_{m}^\perp} c^{(j)}_2P_{\fM_{m}^\perp }\cdots P_{\fM_{m}^\perp}c^{(j)}_{N_j}P_{\fM_{m}^\perp} \right\|< \eps/2
\]
whence
\[
\left\| \sum_{j=1}^rP_{\fM_{m}^\perp}c^{(j)}_1P_{\fM_{m}^\perp} c^{(j)}_2P_{\fM_{m}^\perp }\cdots P_{\fM_{m}^\perp}c^{(j)}_{N_j}P_{\fM_{m}^\perp}\xi  \right\|\geq 1-\eps.
\]
The map $\rho:\A\to B(\fM_m^\perp)$ defined as
\[
\rho(a)=a|_{\fM_m^\perp}, \quad a\in \A
\]
is a completely contractive homomorphism with  $\J_m\subset \ker \rho.$ By assumption we know that $\A/\J_m$ is finite-dimensional and thus so is $\rho(\A)$.
 Thus, by virtue of Lemma \ref{L:fdiminvsub}, there is a finite-dimensional subspace $\fN\subset \fM_m^\perp$ containing $P_{\fM^\perp_m}\xi$ that is invariant for $\rho(\A)$ and such that
\begin{align*}
& \sum_{j=1}^rP_{\fM_{m}^\perp}c^{(j)}_1P_{\fM_{m}^\perp} c^{(j)}_2P_{\fM_{m}^\perp }\cdots P_{\fM_{m}^\perp}c^{(j)}_{N_j}P_{\fM_{m}^\perp}\xi\\
&=\sum_{j=1}^r P_{\fN}c^{(j)}_1P_{\fN} c^{(j)}_2P_{\fN }\cdots P_{\fN}c^{(j)}_{N_j}P_{\fN}\xi.
\end{align*}
We infer that
\[
\left\| \sum_{j=1}^r P_{\fN}c^{(j)}_1P_{\fN} c^{(j)}_2P_{\fN }\cdots P_{\fN}c^{(j)}_{N_j}P_{\fN}\xi\right\|\geq 1-\eps.
\]
The map 
\[
a\mapsto a|_{\fN}, \quad a\in \A
\]
is a completely contractive homomorphism, so there is a $*$-homomorphism $\pi:\rC^*_{\max}(\A)\to B(\fN)$ such that $\pi(a)=a|_{\fN}$ for every $a\in \A$. We thus find
\[
\pi(s)= \sum_{j=1}^r P_{\fN}c^{(j)}_1P_{\fN} c^{(j)}_2P_{\fN }\cdots P_{\fN}c^{(j)}_{N_j}|_{\fN}
\]
so $\|\pi(s)P_{\fN}\xi\|\geq 1-\eps$. We conclude that $\|\pi(s)\|\geq 1-\eps$ and the proof is complete.

\end{proof}

It is a standard fact that every $\rC^*$-algebra admits a contractive approximate identity, but for general operator algebras the picture is more complicated (see \cite{effrosruan1990},\cite{BR2011},\cite{BR2013} for instance). At present we do not know if there are operator algebras satisfying the conditions of the previous theorem that are not of the form considered in Corollary \ref{C:RFDdirectsum}.

\section{Residual finite dimensionality of the $\rC^*$-envelope}\label{S:C*env}
In this section, we study the $\rC^*$-envelopes of RFD operator algebras. In some sense, our focus here is dual to that of the previous section. Indeed, in Section \ref{S:C*max} we considered the maximal $\rC^*$-cover of RFD operator algebras, whereas here we will study the minimal one. 

As in previous sections, we first analyze finite-dimensional operator algebras. In view of Corollary \ref{C:fdimRFD}, and Theorem \ref{T:C*maxfdimA}, a natural guess would be that these must necessarily have RFD $\rC^*$-envelopes. This is not the case as the next example shows.

\begin{example}\label{E:Paulsentrick}
Fix a positive integer $d\geq 2$ and let $H^2_d$ denote the Drury-Arveson space of holomorphic functions on the open unit ball $\bB_d\subset \bC^d$. Let $\S_d\subset B(H^2_d)$ denote the unital subspace generated by the operators $M_{z_1},\ldots,M_{z_d}$ of multiplication by the variables (the reader may consult  \cite{agler2002} for more detail about these objects). The $\rC^*$-algebra $\fT_d$ generated by $\S_d$ is called the \emph{Toeplitz algebra}, and we have $\fT_d\cong \rC^*_e(\S_d)$ \cite[Lemma 7.13 and Theorem 8.15]{arveson1998}. 
Next, let $\A_{\S_d}$ be the unital operator algebra constructed from $\S_d$ as in Lemma \ref{L:M2C*env}. Since $\S_d$ is finite-dimensional, so is $\A_{\S_d}$. Moreover, it follows from Lemma \ref{L:M2C*env} that 
\[
\rC^*_e(\A_{\S_d})\cong \bM_2(\rC^*_e(\S_d))\cong\bM_2(\fT_d).
\]
But since $\fT_d$ contains the ideal of compact operators \cite[Theorem 5.7]{arveson1998}, we infer that $\fT_d$ is not RFD, and thus $\bM_2(\fT_d)$ cannot be either.
\qed
\end{example}

Let $\A$ be a unital operator algebra. The embedding $\eps:\A\to \rC^*_e(\A)$ is a unital completely isometric homomorphism, so that a necessary condition for $\rC^*_e(\A)$ to be RFD is that $\A$ be such. Our basic goal in this section is to identify sufficient conditions for the converse to hold. For simplicity, we restrict our attention to the separable setting, but the interested reader will easily adapt the arguments to more general situations.

Let us now set some notation that we will use throughout. For each $n\in \bN$, let $r_n$ be some positive integer. The object of interest will be a unital operator algebra $\A\subset \prod_{n=1}^\infty\bM_{r_n}$. This unital completely isometric embedding will be fixed and part of the given data in our results. For each $m\in \bN$, we let $\gamma_m:\prod_{n=1}^\infty \bM_{r_n}\to \bM_{r_m}$ denote the natural projection, and we let $\fK$ denote the ideal of compact operators in $\rC^*(\A)$. Thus, 
\[
\fK=\{t\in \rC^*(\A):\lim_{n\to\infty}\|\gamma_n(a)\|=0\}.
\]
We let $\kappa:\rC^*(\A)\to \rC^*(\A)/\fK$ denote the quotient map.  An analysis of the representations of $\fK$ will provide insight into our problem. We first record an elementary fact.

\begin{lemma}\label{L:redK}
Let $\fA\subset \prod_{n=1}^\infty \bM_{r_n}$ be a $\rC^*$-algebra of compact operators. Let $E\subset \oplus_{n=1}^\infty\bC^{r_n}$ be a reducing subspace for $\fA$. If $\fA|_{E}$ is irreducible, then $E$ must be finite-dimensional.
\end{lemma}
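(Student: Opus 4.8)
The plan is to exploit the block structure coming from the ambient algebra $\prod_{n=1}^\infty \bM_{r_n}$ together with a small dose of reduction theory for commutants. Write the underlying Hilbert space as $\fH=\oplus_{n=1}^\infty \bC^{r_n}$ and, for each $n$, let $P_n\in B(\fH)$ be the orthogonal projection onto the $n$-th summand $\bC^{r_n}$. The crucial first observation is that every element of $\fA$ is block-diagonal, being an element of $\prod_{n=1}^\infty \bM_{r_n}$, so each $P_n$ commutes with $\fA$; that is, $P_n\in \fA'$. Moreover $\operatorname{rank} P_n=r_n<\infty$ and $\sum_{n=1}^\infty P_n=I$ in the strong operator topology. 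Since $E$ is reducing for $\fA$, the projection $P_E$ also lies in $\fA'$, and $E$ is invariant for $\fA$.

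The key step is to feed the projections $P_n$ into the irreducibility hypothesis. First I would check, by a direct computation, that for each $n$ the operator $P_EP_nP_E|_E\in B(E)$ commutes with $a|_E$ for every $a\in\fA$: for $\eta\in E$ one has $a\eta\in E$ and $P_Ea=aP_E$, and combining this with $P_na=aP_n$ lets one move $a$ past $P_EP_nP_E$ on $E$. Thus $P_EP_nP_E|_E$ belongs to the commutant of $\fA|_E$. Since $\fA|_E$ is irreducible this commutant is $\bC I_E$, so there are scalars $\lambda_n\in[0,1]$ with
\[
P_EP_nP_E=\lambda_n P_E,\qquad n\in\bN.
\]
Summing over $n$ and using $\sum_n P_n=I$ strongly gives $\big(\sum_n\lambda_n\big)P_E=P_E$, hence $\sum_n\lambda_n=1$ (assuming, as one may, that $E\neq\{0\}$, since an irreducible representation acts on a nonzero space).

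To conclude, I would pick an index $n_0$ with $\lambda_{n_0}>0$, which exists because the $\lambda_n$ sum to $1$. On the one hand $\operatorname{rank}(\lambda_{n_0}P_E)=\dim E$ because $\lambda_{n_0}\neq 0$; on the other hand $P_EP_{n_0}P_E=(P_{n_0}P_E)^*(P_{n_0}P_E)$ has rank at most $\operatorname{rank}P_{n_0}=r_{n_0}$. Therefore $\dim E\le r_{n_0}<\infty$, as desired.

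I expect the main obstacle to be the verification in the key step that $P_EP_nP_E|_E$ genuinely lands in the commutant of $\fA|_E$: the general ``backward'' inclusion for reduced commutants can fail when the relevant projections do not commute, so one cannot simply quote reduction theory and must instead use the special feature that $P_n$ itself lies in $\fA'$. Everything else—the rank bound and the summation identity—is routine once this is in place. It is worth noting that compactness of the elements of $\fA$ is not actually needed for this argument; only the finite block sizes $r_n$ and the block-diagonal structure enter.
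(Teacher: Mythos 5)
Your proof is correct, and it takes a genuinely different route from the paper's. The paper argues by contradiction: assuming $E$ is infinite-dimensional, it runs a somewhat delicate inductive construction to produce a vector $\xi\in E$ whose block components $P_{N_j}\xi$ are nonzero along an infinite sequence of indices, then invokes the classical structure theorem that an irreducible $\rC^*$-algebra of compact operators on $E$ must contain \emph{all} compact operators on $E$, so that some $\Xi\in \fA$ restricts to the rank-one projection $\xi\otimes\xi$; block-diagonality then forces $\|\gamma_{N_j}(\Xi)\|\geq 1$ for all $j$, contradicting compactness of $\Xi$. Your argument instead runs Schur's lemma directly on the block projections: since each $P_n$ and $P_E$ lie in $\fA'$, the compressions $P_EP_nP_E|_E$ lie in the commutant of the self-adjoint set $\fA|_E$ (your verification of this step is exactly right, and it is indeed the point that needs care), hence are scalars $\lambda_n$ summing to $1$, and a rank count on any $\lambda_{n_0}>0$ gives $\dim E\leq r_{n_0}$. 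What your approach buys: it is shorter, avoids both the inductive vector construction and the structure theory of irreducible algebras of compacts, and — as you correctly observe — it never uses compactness of the elements of $\fA$, so it proves the stronger statement that \emph{any} $\rC^*$-subalgebra of $\prod_{n=1}^\infty \bM_{r_n}$ has only finite-dimensional reducing subspaces on which it acts irreducibly. What the paper's approach buys is mainly stylistic coherence: it works squarely within the representation theory of algebras of compact operators, which is the framework used immediately afterwards (Lemma \ref{L:repfK} rests on Arveson's decomposition theorem for such algebras), but for this lemma in isolation your argument is the more economical and more general one.
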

\begin{proof}
Assume that $E$ is infinite-dimensional. For each $m\in \bN$, we let $P_m$ denote the orthogonal projection of $ \oplus_{n=1}^\infty\bC^{r_n}$ onto $\bC^{r_m}$. We claim that there is a vector $\xi\in E$ and a strictly increasing sequence of natural numbers $(N_j)_j$ with the property that $P_{N_j}\xi\neq 0$ for every $j\in \bN$. 

Choose $N_1\in \bN$ and a unit vector $\xi_1\in E$ such that $P_{N_1}\xi_1\neq 0$. Define $c_1=1$. Assume that for $m\in \bN$ we have constructed unit vector $\xi_1,\ldots,\xi_m\in E$, natural numbers $N_1<N_2<\ldots<N_m$ and positive numbers $c_1,\ldots,c_m$ such that
\begin{enumerate}

\item[\rm{(a)}]  $P_{N_j}\xi_j\neq 0$ for every $1\leq j\leq m$,

\item[\rm{(b)}]  $P_{N_k}\xi_j=0$  for every $1\leq j,k\leq m$ such that $k>j$,

\item[\rm{(c)}] $0<c_j<2^{-j}$  for every $1\leq j\leq m$ and
\[
c_j \|P_{N_k}\xi_j\|<\frac{1}{2^j}|c_k| \|P_{N_k}\xi_k\|
\]
for every $1\leq j,k\leq m$ such that $k<j$.
\end{enumerate}
Note that if  $\{n\in \bN:P_n \xi_k\neq 0\}$ is infinite for some $1\leq k\leq m$, then $\xi_k$ has the desired property and we are done. Without loss of generality, we may thus assume that the set $\{n\in \bN:P_n \xi_k\neq 0\}$ is finite for every $1\leq k\leq m$. By assumption, $E$ is infinite-dimensional so there is $N_{m+1}>N_m$ such that $P_{N_{m+1}}\xi_k=0$ for every $1\leq k\leq m$ and $P_{N_{m+1}}E\neq \{0\}$. Choose a unit vector $\xi_{m+1}\in E$ such that $P_{N_{m+1}}\xi_{m+1}\neq 0$. Choose also a real number $0<c_{m+1}<2^{-(m+1)}$ such that
\[
c_{m+1}\|P_{N_k}\xi_{m+1}\|<\frac{1}{2^{m+1}}|c_k| \|P_{N_k}\xi_k\|
\]
for every $1\leq k\leq m$. By induction, we obtain a sequence $(\xi_j)_j$ of unit vectors in $E$, a strictly increasing sequence of natural numbers $(N_j)_j$ and a sequence of positive numbers $(c_j)_j$ with the property that
\begin{enumerate}

\item[\rm{(a')}]  $P_{N_j}\xi_j\neq 0$ for every $j\in \bN$,

\item[\rm{(b')}]  $P_{N_k}\xi_j=0$  for every $j\in \bN$ and every $k>j$,

\item[\rm{(c')}] $0<c_j<2^{-j}$  for every $j\in\bN$ and
\[
c_j \|P_{N_k}\xi_j\|<\frac{1}{2^j}|c_k| \|P_{N_k}\xi_k\|
\]
for every $j\in \bN$ and every $k<j$.
\end{enumerate}
Put $\xi=\sum_{j=1}^\infty c_j \xi_j$. Then, using (c') we see that
\[
\|\xi\|\leq \sum_{j=1}^\infty \frac{ \|\xi_j\|}{2^j}=1
\]
and $\xi\in E$. For $k\in \bN$, we compute using (a'),(b') and (c') that
\begin{align*}
\|P_{N_k}\xi\|&\geq \left\|P_{N_k}\left(\sum_{j=1}^k c_j\xi_j\right) \right\|-\left \|P_{N_k}\left(\sum_{j=k+1}^\infty c_j\xi_j\right) \right\|\\
&\geq c_k \|P_{N_k}\xi_k\|-\sum_{j=k+1}^\infty c_j \|P_{N_k}\xi_j\|\\
&\geq c_k \|P_{N_k}\xi_k\|\left(1-\sum_{j=k+1}^\infty \frac{1}{2^j}\right)\geq \frac{c_k}{2} \|P_{N_k}\xi_k\|>0.
\end{align*}
The claim is established.

Now, we know that $\fA|_E$ is an irreducible $\rC^*$-algebra of compact operators, and thus it must consist of all compact operators on $E$. In particular, there is $\Xi\in \fA$ with the property that $\Xi|_E=\xi\otimes \xi$. On the other hand, we have $\Xi\in \fA$ so we can write $\Xi=\oplus_{n=1}^\infty \Xi_n$ where $\Xi_n:\bC^{r_n}\to\bC^{r_n}$ for every $n\in \bN$. Note that  $\Xi \xi=\xi$, so that $\Xi_n P_{n}\xi=P_n \xi$ for every $n\in \bN$. Using that $P_{N_j}\xi\neq 0$ for every $j\in \bN$, we see that $\|\Xi_{N_j}\|\geq 1$ for every $j\in \bN$, which contradicts the fact that $\Xi$ is compact.
\end{proof}

Next, we leverage this fact to identify certain representations that preserve the residual finite-dimensionality of a $\rC^*$-algebra.

\begin{lemma}\label{L:repfK}
Let $\fA\subset \prod_{n=1}^\infty \bM_{r_n}$ be a unital $\rC^*$-algebra and let $\fL$ denote the ideal of compact operators in $\fA$. Let $\pi:\fA\to B(\fH)$ be a unital $*$-homomorphism with the property that $\pi(\fL)$ is non-degenerate. Then, there is a collection $\{E_\lambda\}_{\lambda\in \Lambda}$ of finite-dimensional reducing subspaces for $\fA$ with the property that $\fA|_{E_\lambda}$ is irreducible for every $\lambda\in \Lambda$, and such that there is a unitary $U:\fH\to \oplus_{\lambda\in \Lambda}E_\lambda$ satisfying
\[
U\pi(s)U^*=\oplus_{\lambda\in \Lambda}(s|_{E_\lambda}), \quad s\in \fA.
\]
\end{lemma}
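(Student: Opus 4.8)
The plan is to reduce everything to the ideal $\fL$ of compact operators, exploit the complete reducibility of its representations, and then promote the resulting decomposition from $\fL$ to all of $\fA$ using the fact that $\fL$ is an ideal. Throughout, write $\fH_0=\oplus_{n=1}^\infty \bC^{r_n}$ for the space on which $\fA$ acts.

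First I would decompose $\pi|_{\fL}$. Since $\fL$ is a $\rC^*$-algebra of compact operators and $\pi(\fL)$ is non-degenerate, the representation $\pi|_{\fL}$ is completely reducible: there is an orthogonal decomposition $\fH=\oplus_{\lambda\in\Lambda}\fH_\lambda$ into reducing subspaces for $\pi(\fL)$ on each of which $\pi|_{\fL}$ acts irreducibly, and non-degeneracy guarantees that these summands exhaust $\fH$. Moreover, every irreducible representation of a $\rC^*$-algebra of compact operators is unitarily equivalent to a subrepresentation of the identity representation, so for each $\lambda$ there is a reducing subspace $E_\lambda\subset\fH_0$ for $\fL$ with $\fL|_{E_\lambda}$ irreducible, together with a unitary $U_\lambda:\fH_\lambda\to E_\lambda$ satisfying $U_\lambda\,\pi(\ell)|_{\fH_\lambda}\,U_\lambda^*=\ell|_{E_\lambda}$ for every $\ell\in\fL$. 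By Lemma \ref{L:redK}, each $E_\lambda$ is finite-dimensional, and hence $\fL|_{E_\lambda}=B(E_\lambda)$.

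The crux is to upgrade these statements from $\fL$ to $\fA$, and here the finite-dimensionality of $E_\lambda$ is what makes the ideal property bite. Fix $\lambda$ and choose $\ell_\lambda\in\fL$ with $\ell_\lambda|_{E_\lambda}=I_{E_\lambda}$, which is possible since $\fL|_{E_\lambda}=B(E_\lambda)$. For $\zeta\in E_\lambda$ we then have $\zeta=\ell_\lambda\zeta$, so for $a\in\fA$ the element $a\zeta=(a\ell_\lambda)\zeta$ lies in $E_\lambda$ because $a\ell_\lambda\in\fL$ and $E_\lambda$ reduces $\fL$; applying the same reasoning to $a^*$ shows that $E_\lambda$ reduces $\fA$, and since $\fA|_{E_\lambda}\supseteq \fL|_{E_\lambda}=B(E_\lambda)$ the restriction $\fA|_{E_\lambda}$ is irreducible. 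The identical argument carried out in the representation $\pi$ --- now using $\pi(\ell_\lambda)|_{\fH_\lambda}=I_{\fH_\lambda}$ and $\pi(a\ell_\lambda)\fH_\lambda\subset\fH_\lambda$ --- shows that each $\fH_\lambda$ reduces $\pi(\fA)$. Finally, a short computation with the factorization $\xi=\pi(\ell_\lambda)\xi$ for $\xi\in\fH_\lambda$ promotes the intertwining relation from $\fL$ to $\fA$, giving $U_\lambda\,\pi(a)|_{\fH_\lambda}\,U_\lambda^*=a|_{E_\lambda}$ for every $a\in\fA$.

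Assembling the pieces, the unitary $U=\oplus_{\lambda}U_\lambda:\fH\to\oplus_\lambda E_\lambda$ satisfies $U\pi(s)U^*=\oplus_\lambda(s|_{E_\lambda})$ for all $s\in\fA$, as required. I expect the main obstacle to be the upgrade step of the previous paragraph: it is what genuinely uses that $\fL$ is an ideal (through the memberships $a\ell_\lambda,a^*\ell_\lambda\in\fL$) together with the finiteness of $E_\lambda$ (to produce $\ell_\lambda$ acting as the identity). The appeals to complete reducibility and to the fact that irreducible representations of an algebra of compact operators are subrepresentations of the identity should be cited from the standard structure theory of $\rC^*$-algebras of compact operators.
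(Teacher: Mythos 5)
Your proof is correct and follows essentially the same route as the paper: the paper invokes Arveson's Theorem 1.4.4 (which packages your two structure-theory steps---complete reducibility of $\pi|_{\fL}$ plus the equivalence of irreducible representations of compacts with subrepresentations of the identity---into a single citation), then applies Lemma \ref{L:redK} for finite-dimensionality, and finally uses the ideal property to upgrade the reducing subspaces and the intertwining from $\fL$ to $\fA$. The only cosmetic difference lies in the upgrade step: where you manufacture local units $\ell_\lambda\in\fL$ with $\ell_\lambda|_{E_\lambda}=I_{E_\lambda}$, the paper notes $E_\lambda=\spn \fL E_\lambda$ and leaves the promotion of the intertwining relation as a ``standard verification''; both arguments hinge on $a\fL\subset\fL$ in exactly the same way.
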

\begin{proof}
By Theorem \cite[Theorem 1.4.4]{arveson1976inv}, we see that there is a collection $\{E_\lambda\}_{\lambda\in \Lambda}$ of reducing  subspaces for $\fL$ with the property that $\{x|_{E_\lambda}:x\in\fL\}$ is irreducible for every $\lambda\in \Lambda$, along with a unitary $U:\fH\to \oplus_{\lambda\in \Lambda}E_\lambda$ such that
\[
U\pi(x)U^*=\oplus_{\lambda\in \Lambda} (x|_{E_\lambda}), \quad x\in \fL.
\]
By virtue of Lemma \ref{L:redK}, we see that $E_\lambda$ is finite-dimensional for every $\lambda\in \Lambda$. Now, we note that $E_\lambda=\spn \fL E_\lambda$, whence $E_\lambda$ is reducing for $\fA$ for every $\lambda\in \Lambda$ and a standard verification reveals that
\[
U\pi(s)U^*=\oplus_{\lambda\in \Lambda} (s|_{E_\lambda}), \quad s\in \fA.
\]
\end{proof}

We can now exhibit a sufficient condition for the $\rC^*$-envelope of an RFD operator algebra to be RFD as well.

\begin{theorem}\label{T:quotientRFD}
Let $\A\subset \prod_{n=1}^\infty\bM_{r_n}$ be a unital operator algebra. Assume that every $\rC^*$-algebra which is a quotient of $\rC^*(\A)/\fK$ is RFD. Then, $\rC^*_e(\A)$ is RFD.
\end{theorem}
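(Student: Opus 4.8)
The plan is to realize $\rC^*_e(\A)$ as a quotient $\rC^*(\A)/\J$, where $\J$ is the Shilov ideal and $q:\rC^*(\A)\to \rC^*_e(\A)$ is the associated surjective unital $*$-homomorphism, and then to exhibit a separating family of finite-dimensional representations of $\rC^*_e(\A)$. The organizing idea is to split such representations according to how they treat the ideal $q(\fK)\subset \rC^*_e(\A)$; this is a closed two-sided ideal, being the image of the ideal $\fK$ under a surjective $*$-homomorphism. First I would identify the corresponding quotient: since $q^{-1}(q(\fK))=\fK+\J$, a closed ideal as the sum of two closed ideals in a $\rC^*$-algebra, one has $\rC^*_e(\A)/q(\fK)\cong \rC^*(\A)/(\fK+\J)$, which is a quotient of $\rC^*(\A)/\fK$ and hence RFD by hypothesis.

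Next I would fix a faithful unital representation $\rho:\rC^*_e(\A)\to B(\fH)$ and decompose $\fH$ with respect to $q(\fK)$. Setting $\fH_1=\ol{\rho(q(\fK))\fH}$, the subspace $\fH_1$ is reducing and $\fH_1^\perp=\cap_{k\in q(\fK)}\ker\rho(k)$. On $\fH_1^\perp$ the representation annihilates $q(\fK)$, so $\rho|_{\fH_1^\perp}$ factors through the RFD algebra $\rC^*_e(\A)/q(\fK)$. On $\fH_1$, the pulled-back map $(\rho\circ q)|_{\fH_1}$ is a unital $*$-homomorphism on $\rC^*(\A)\subset \prod_{n}\bM_{r_n}$ that is nondegenerate on $\fK$, which is precisely the hypothesis of Lemma \ref{L:repfK}. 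Applying that lemma produces finite-dimensional reducing subspaces $\{E_\lambda\}$ for $\rC^*(\A)$ with $(\rho\circ q)|_{\fH_1}$ unitarily equivalent to $\oplus_\lambda(s\mapsto s|_{E_\lambda})$. Since $\rho\circ q$ kills $\J$, each summand $s\mapsto s|_{E_\lambda}$ kills $\J$, and thus descends to a finite-dimensional representation $\sigma_\lambda$ of $\rC^*_e(\A)$.

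It then remains to verify separation. Given a nonzero $x\in\rC^*_e(\A)$, faithfulness of $\rho$ gives $\rho(x)\neq 0$, so either $\rho|_{\fH_1}(x)\neq 0$ or $\rho|_{\fH_1^\perp}(x)\neq 0$. In the first case some $\sigma_\lambda(x)\neq 0$, furnishing a finite-dimensional representation of $\rC^*_e(\A)$ that detects $x$. In the second case the image of $x$ in the RFD quotient $\rC^*_e(\A)/q(\fK)$ is nonzero, so it is detected by a finite-dimensional representation $\tau$ of that quotient; composing $\tau$ with the quotient map yields a finite-dimensional representation of $\rC^*_e(\A)$ not annihilating $x$. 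In either case a finite-dimensional representation witnesses $x\neq 0$, whence these representations separate points and $\rC^*_e(\A)$ is RFD.

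I expect the main obstacle to be the step on $\fH_1$: one must correctly pull the representation back along $q$ so that it lands inside $\prod_n\bM_{r_n}$ and Lemma \ref{L:repfK} becomes applicable, and then verify that the resulting finite-dimensional pieces genuinely descend to $\rC^*_e(\A)$ (that is, annihilate $\J$) rather than merely being representations of $\rC^*(\A)$. The supporting ingredients --- closedness of $\fK+\J$, the identification of $\rC^*_e(\A)/q(\fK)$ as a quotient of $\rC^*(\A)/\fK$, and the standard decomposition of $\rho$ relative to the ideal $q(\fK)$ --- are routine, but should be recorded with care.
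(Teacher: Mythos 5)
Your proof is correct and follows essentially the same route as the paper: both decompose a representation realizing $\rC^*_e(\A)$ into the part on which $\fK$ acts non-degenerately (handled by Lemma \ref{L:repfK}) and the part annihilating $\fK$ (handled by the hypothesis on quotients of $\rC^*(\A)/\fK$). The only cosmetic differences are that the paper realizes $\rC^*_e(\A)$ as the image of a $*$-homomorphism with the unique extension property rather than as the quotient of $\rC^*(\A)$ by the Shilov ideal, and concludes by embedding $\rC^*_e(\A)$ into the direct sum of the two resulting RFD algebras rather than by exhibiting a separating family of finite-dimensional representations.
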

\begin{proof}
Let $\pi:\rC^*(\A)\to B(\fH)$ be a unital $*$-homomorphism that is completely isometric on $\A$ and that has the unique extension property with respect to $\A$ (such things exist by \cite{dritschel2005}). Then, $\rC^*_e(\A)\cong \pi(\rC^*(\A))$. Basic representation theory for $\rC^*$-algebras (see the discussion preceding \cite[Theorem I.3.4]{arveson1976inv}) dictate that we may decompose $\pi$ as $\pi=\pi_\fK\oplus \sigma$, where $\pi_\fK(\fK)$ is non-degenerate and $\fK\subset \ker\sigma$. We know that $\sigma(\rC^*(\A))$ is $*$-isomorphic to a quotient of $\rC^*(\A)/\fK$, and hence is RFD by assumption. But so is $\pi_\fK(\rC^*(\A))$ by Lemma \ref{L:repfK}. Hence, 
\[
\rC^*_e(\A)\cong \pi(\rC^*(\A))\subset \pi_\fK(\rC^*(\A))\oplus \sigma(\rC^*(\A))
\]
is RFD.
\end{proof}

The following consequence is noteworthy.

\begin{corollary}\label{C:quotientRFD}
Let $\A\subset \prod_{n=1}^\infty\bM_{r_n}$ be a unital operator algebra. Then, $\rC^*_e(\A)$ is RFD if one of the following conditions holds:
\begin{enumerate}

\item[\rm{(a)}] $\rC^*(\A)/\fK$ is commutative,

\item[\rm{(b)}] $\rC^*(\A)/\fK$ is finite-dimensional.
\end{enumerate}
\end{corollary}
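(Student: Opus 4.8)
The plan is to obtain both statements as immediate consequences of Theorem \ref{T:quotientRFD}. That theorem reduces the problem to a single verification: under either hypothesis, one must check that \emph{every} $\rC^*$-algebra arising as a quotient of $\rC^*(\A)/\fK$ is RFD. So in each case I would confirm this condition and then invoke Theorem \ref{T:quotientRFD} directly, with no further work on $\rC^*_e(\A)$ itself.

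For part (a), I would first recall the permanence property that any quotient of a commutative $\rC^*$-algebra is again commutative, since commutativity is plainly inherited upon passing to a quotient by a closed two-sided ideal. It then suffices to note that every commutative $\rC^*$-algebra is RFD: by Gelfand theory such an algebra is $*$-isomorphic to $\rC_0(Y)$ for some locally compact Hausdorff space $Y$, and the family of point evaluations $\{\mathrm{ev}_y:y\in Y\}$ furnishes a separating family of one-dimensional $*$-representations, i.e.\ a $*$-isomorphic embedding into $\prod_{y\in Y}\bM_1$. Hence every quotient of $\rC^*(\A)/\fK$ is commutative and therefore RFD, so the hypothesis of Theorem \ref{T:quotientRFD} holds.

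For part (b), I would use the analogous observation that a quotient of a finite-dimensional $\rC^*$-algebra is finite-dimensional, its dimension being bounded by that of $\rC^*(\A)/\fK$. Every finite-dimensional $\rC^*$-algebra is $*$-isomorphic to a finite direct sum $\bigoplus_k \bM_{n_k}$ by the structure theorem, and is thus RFD in a trivial way. Again every quotient of $\rC^*(\A)/\fK$ is RFD, and Theorem \ref{T:quotientRFD} yields the conclusion.

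There is no genuine obstacle here, as all of the substance resides in Theorem \ref{T:quotientRFD}. The only points that warrant explicit (if brief) mention are the two elementary permanence facts that commutativity and finite-dimensionality descend to further quotients, together with the standard base cases that commutative and finite-dimensional $\rC^*$-algebras are RFD; it is precisely the closure of each of these two classes under quotients that makes the verification of the hypothesis of Theorem \ref{T:quotientRFD} immediate.
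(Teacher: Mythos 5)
Your proof is correct and follows exactly the paper's route: the paper likewise deduces both parts from Theorem \ref{T:quotientRFD}, noting only that each hypothesis implies every quotient of $\rC^*(\A)/\fK$ is RFD. Your write-up simply makes explicit the permanence facts (quotients of commutative, respectively finite-dimensional, $\rC^*$-algebras remain so) and the standard base cases that the paper leaves as ``readily seen.''
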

\begin{proof}
Both assumptions are readily seen to imply that every $\rC^*$-algebra which is a  quotient of $\rC^*(\A)/\fK$ is RFD, so the result follows at once from Theorem \ref{T:quotientRFD}.
\end{proof}

We now identify another context where the $\rC^*$-envelope can be shown to be RFD. The next result is reminiscent of Arveson's boundary theorem \cite[Theorem 2.1.1]{arveson1972}. 

\begin{theorem}\label{T:smallkappa}
Let $\A\subset \prod_{n=1}^\infty\bM_{r_n}$ be a unital operator algebra. Assume that for every $d\in \bN$, there is a dense subset $\D_d\subset \bM_d(\A)$ with the property that $\|\kappa^{(d)}(A)\|<\|A\|$ for every $A\in \D_d$. Then, there is a collection $\{E_\lambda\}_{\lambda\in \Lambda}$ of finite-dimensional reducing subspaces for $\rC^*(\A)$  such that the unital $*$-homomorphism 
\[
s\mapsto \oplus_{\lambda\in \Lambda}(s|_{E_\lambda}), \quad s\in \rC^*(\A)
\]
is completely isometric on $\A$ and has the unique extension property with respect to $\A$. Moreover, $\rC^*(\A)|_{E_{\lambda}}$ is irreducible for every $\lambda\in \Lambda.$  In particular
 $\rC^*_e(\A)$ is $*$-isomorphic to
\[
\{\oplus_{\lambda\in \Lambda}(s|_{E_\lambda}):s\in \rC^*(\A)\}
\]
and it is RFD.
\end{theorem}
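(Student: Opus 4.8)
The plan is to realize $\rC^*_e(\A)$ as the image of $\rC^*(\A)$ under the ``compact part'' of a boundary representation, and to use the hypothesis to show that this compact part is already large enough to capture the $\rC^*$-envelope. The whole argument is an assembly of the machinery developed earlier, with the quantitative hypothesis entering at exactly one point.

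First I would invoke \cite{dritschel2005} to fix a unital $*$-homomorphism $\pi:\rC^*(\A)\to B(\fH)$ that is completely isometric on $\A$ and has the unique extension property with respect to $\A$, so that $\rC^*_e(\A)\cong\pi(\rC^*(\A))$. As in the proof of Theorem \ref{T:quotientRFD}, I decompose $\pi=\pi_\fK\oplus\sigma$, where $\pi_\fK(\fK)$ is non-degenerate and $\fK\subset\ker\sigma$. Applying Lemma \ref{L:repfK} to $\pi_\fK$ then produces a collection $\{E_\lambda\}_{\lambda\in\Lambda}$ of finite-dimensional reducing subspaces for $\rC^*(\A)$, each with $\rC^*(\A)|_{E_\lambda}$ irreducible, together with a unitary identifying $\pi_\fK$ with the map $s\mapsto\oplus_{\lambda\in\Lambda}(s|_{E_\lambda})$. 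Since unitary equivalence preserves both complete isometry and the unique extension property, it suffices to establish these two properties for $\pi_\fK$.

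The hypothesis is used precisely to show that $\pi_\fK$ is completely isometric on $\A$. Because $\fK\subset\ker\sigma$, the $*$-homomorphism $\sigma$ factors through $\kappa$, which gives $\|\sigma^{(d)}(A)\|\le\|\kappa^{(d)}(A)\|$ for every $A\in\bM_d(\rC^*(\A))$. For $A\in\D_d$ the assumption then yields $\|\sigma^{(d)}(A)\|\le\|\kappa^{(d)}(A)\|<\|A\|$, while $\pi=\pi_\fK\oplus\sigma$ is completely isometric, so that $\|A\|=\max\{\|\pi_\fK^{(d)}(A)\|,\|\sigma^{(d)}(A)\|\}$ forces $\|\pi_\fK^{(d)}(A)\|=\|A\|$. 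As $\pi_\fK$ is completely contractive and $\D_d$ is dense in $\bM_d(\A)$, a routine continuity argument upgrades this to $\|\pi_\fK^{(d)}(A)\|=\|A\|$ for every $A\in\bM_d(\A)$ and every $d\in\bN$.

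For the unique extension property I would argue that $\pi_\fK$ inherits it as a direct summand of $\pi$: given any unital completely contractive map $\Phi:\rC^*(\A)\to B(\fH_{\pi_\fK})$ with $\Phi|_\A=\pi_\fK|_\A$, the map $\Phi\oplus\sigma$ is unital completely contractive and agrees with $\pi$ on $\A$, so the unique extension property of $\pi$ forces $\Phi\oplus\sigma=\pi=\pi_\fK\oplus\sigma$, whence $\Phi=\pi_\fK$. With both properties in hand, the discussion in Section \ref{S:prelim} identifies $\rC^*_e(\A)\cong\pi_\fK(\rC^*(\A))=\{\oplus_{\lambda\in\Lambda}(s|_{E_\lambda}):s\in\rC^*(\A)\}$, and the latter is a $\rC^*$-subalgebra of $\prod_{\lambda\in\Lambda}B(E_\lambda)\cong\prod_{\lambda\in\Lambda}\bM_{\dim E_\lambda}$, hence RFD. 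I expect the main obstacle to be the completely-isometric step, where the hypothesis on $\kappa$ must be married to the block-diagonal structure $\pi=\pi_\fK\oplus\sigma$; the direct-summand inheritance of the unique extension property, though short, is the structural observation that legitimizes discarding $\sigma$ entirely.
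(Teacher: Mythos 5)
Your proposal is correct and follows essentially the same route as the paper: the same boundary representation $\pi$ from \cite{dritschel2005}, the same decomposition $\pi=\pi_\fK\oplus\sigma$ with $\fK\subset\ker\sigma$, the same use of the hypothesis via $\|\sigma^{(d)}(A)\|\leq\|\kappa^{(d)}(A)\|<\|A\|$ on $\D_d$ plus density to get that $\pi_\fK$ is completely isometric on $\A$, and the same application of Lemma \ref{L:repfK}. The only difference is cosmetic: you spell out the direct-summand inheritance of the unique extension property (which the paper dismisses as ``easily verified''), and your argument for it is the correct one.
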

\begin{proof}
Let $\pi:\rC^*(\A)\to B(\fH)$ be a unital $*$-homomorphism that is completely isometric on $\A$ and that has the unique extension property with respect to $\A$. As before, we may decompose $\pi$ as $\pi_\fK\oplus \sigma$, where $\pi_\fK(\fK)$ is non-degenerate and $\fK\subset \ker \sigma$. In particular, we see that
\[
\|\sigma^{(d)}(A)\|\leq \|\kappa^{(d)}(A)\|<\|A\|
\]
for every $A\in \D_d$ and every $d\in\bN$. The fact that $\pi$ is completely isometric on $\A$ implies 
\[
\|\pi_{\fK}^{(d)}(A)\|=\|A\|
\]
for every $A\in \D_d$ and every $d\in \bN$. Since $\D_d$ is dense in $\bM_d(\A)$ for every $d\in \bN$, we infer that $\pi_{\fK}$ is completely isometric on $\A$.  Moreover, it is easily verified that $\pi_{\fK}$ inherits from $\pi$ the unique extension property with respect to $\A$. The conclusion now follows from Lemma \ref{L:repfK} applied to $\pi_{\fK}$.
\end{proof}

Next, we show that if the algebra $\A$ contains many compact operators, then the condition of Theorem \ref{T:smallkappa} is automatically satisfied. Recall that if $\A$ and $\B$ are operator algebras, then a completely contractive surjective homomorphism $\pi:\A\to\B$ is a \emph{complete quotient map} if the induced map $\widehat \pi:\A/\ker \pi\to \B$ is a complete isometry. 

\begin{theorem}\label{T:epssurj}
Let $\A\subset \prod_{n=1}^\infty \bM_{r_n}$ be a unital operator algebra. Assume that there is $N\in \bN$ with the property that $\gamma_n|_{\fK\cap \A}$ is a complete quotient map onto $\bM_{r_n}$ for every $n\geq N$.  Then, there is a collection $\{E_\lambda\}_{\lambda\in \Lambda}$ of finite-dimensional reducing subspaces for $\rC^*(\A)$  such that the unital $*$-homomorphism 
\[
s\mapsto \oplus_{\lambda\in \Lambda}(s|_{E_\lambda}), \quad s\in \rC^*(\A)
\]
is completely isometric on $\A$ and has the unique extension property with respect to $\A$. Moreover, $\rC^*(\A)|_{E_{\lambda}}$ is irreducible for every $\lambda\in \Lambda.$  In particular
 $\rC^*_e(\A)$ is $*$-isomorphic to
\[
\{\oplus_{\lambda\in \Lambda}(s|_{E_\lambda}):s\in \rC^*(\A)\}
\]
and it is RFD.
\end{theorem}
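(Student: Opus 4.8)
The plan is to deduce this result directly from Theorem \ref{T:smallkappa} by verifying that its density hypothesis is satisfied for the embedding $\A \subset \prod_{n=1}^\infty \bM_{r_n}$. Concretely, I would show that for every $d \in \bN$ the set
\[
\D_d = \{A \in \bM_d(\A) : \|\kappa^{(d)}(A)\| < \|A\|\}
\]
is dense in $\bM_d(\A)$; the entire conclusion (the finite-dimensional reducing subspaces, the unique extension property, and the residual finite-dimensionality of $\rC^*_e(\A)$) then follows verbatim from Theorem \ref{T:smallkappa}.

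The first step is to reinterpret the quotient norm. Applying Lemma \ref{L:limsup} to the $\rC^*$-algebra $\rC^*(\A)$, whose ideal of compact operators is exactly $\fK$, I obtain $\|\kappa^{(d)}(A)\| = \limsup_{n\to\infty}\|\gamma_n^{(d)}(A)\|$ for every $A \in \bM_d(\A)$, while by definition $\|A\| = \sup_n \|\gamma_n^{(d)}(A)\|$. The task is therefore to show that, after an arbitrarily small perturbation, the supremum of the coordinate norms can be forced strictly above their limsup.

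The heart of the argument is a small compact perturbation localized at a single coordinate. Fix $A \in \bM_d(\A)$ with $\|\kappa^{(d)}(A)\| = \|A\| =: L$ (otherwise $A \in \D_d$ already) together with $\eps > 0$, and choose $\delta \in (0, \eps/3)$. Since $L$ is a limsup, there is some $n_0 \geq N$ with $\|\gamma_{n_0}^{(d)}(A)\| > L - \delta$. Writing $M = \gamma_{n_0}^{(d)}(A)$ and letting $u, v$ be a pair of top singular vectors of $M$ (or arbitrary unit vectors if $M = 0$), I set $T_0 = 2\delta\, u v^*$, so that $\|T_0\| = 2\delta$ and $\|M + T_0\| \geq \|Mv + 2\delta u\| = \|M\| + 2\delta > L + \delta$. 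Now the hypothesis is invoked: because $\gamma_{n_0}|_{\fK \cap \A}$ is a complete quotient map onto $\bM_{r_{n_0}}$, its induced map on the quotient is completely isometric, so the $d$-th ampliation $\gamma_{n_0}^{(d)}$ is again a quotient map of $\bM_d(\fK \cap \A)$ onto $\bM_{d r_{n_0}}$. This permits a norm-controlled lift: there is $K \in \bM_d(\fK \cap \A) \subset \bM_d(\A)$ with $\gamma_{n_0}^{(d)}(K) = T_0$ and $\|K\| < \|T_0\| + \delta = 3\delta$.

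Setting $A' = A + K$ finishes the construction. On the one hand $\|A - A'\| = \|K\| < 3\delta < \eps$. On the other hand, the decisive feature is that $K$ is compact, so $\|\gamma_n^{(d)}(K)\| \to 0$; consequently $\|\kappa^{(d)}(A')\| = \limsup_n \|\gamma_n^{(d)}(A')\| = \limsup_n \|\gamma_n^{(d)}(A)\| = L$, while $\|A'\| \geq \|\gamma_{n_0}^{(d)}(A')\| = \|M + T_0\| > L$. Thus $A' \in \D_d$, establishing density. The main point to handle with care is precisely this interplay between the two norms: the perturbation must raise the genuine supremum, which is attained at the finite coordinate $n_0$, while leaving the essential (Calkin) norm untouched, and compactness of the lift $K$ is exactly what guarantees the latter. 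With density of each $\D_d$ secured, an appeal to Theorem \ref{T:smallkappa} completes the proof.
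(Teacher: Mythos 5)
Your proposal is correct and takes essentially the same route as the paper's own proof: both establish density of the set $\{A\in\bM_d(\A):\|\kappa^{(d)}(A)\|<\|A\|\}$ by lifting, through the complete quotient map hypothesis, a small rank-one perturbation supported at a single coordinate $n_0\geq N$ where the norm of $A$ is nearly attained, so that the supremum of the coordinate norms rises strictly above the (unchanged) essential norm, and then both invoke Theorem \ref{T:smallkappa}. The only cosmetic difference is the shape of the bump: the paper lifts a rank-one contraction $R$ satisfying $R\xi=\gamma_{n_0}^{(d)}(A)\xi$ and perturbs by $\delta K$, whereas you lift $2\delta\,uv^*$ built from top singular vectors; the estimates are interchangeable.
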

\begin{proof}
Let $d\in \bN$ and let $A\in \bM_d(\A)$ be such that $\|\kappa^{(d)}(A)\|=\|A\|=1$. By virtue of Lemma \ref{L:limsup}, we conclude that 
\[
\limsup_{n\to\infty}\|\gamma_n^{(d)}(A)\|=\|A\|.
\]
Hence, we have that $\|A\|=\sup_{n\geq N}\|\gamma_n^{(d)}(A)\|$.  Let $0<\delta<1$ and choose $n\geq N$ such that 
\[
(1-\delta/2)\|A\|\leq \|\gamma_n^{(d)}(A)\|.
\]
Next, we note that $\gamma_n^{(d)}(A)\in \bM_d(\bM_{r_n})$ so there is a unit vector $\xi\in (\bC^{r_n})^{(d)}$ with the property that 
\[
\|\gamma_n^{(d)}(A)\xi\|=\|\gamma_n^{(d)}(A)\|.
\]
Let $R\in \bM_d(\bM_{r_n})$ denote the rank-one contraction such that $R\xi=\gamma_n^{(d)}(A)\xi$.
Since $\gamma_n|_{\fK\cap \A}$ is a complete quotient map, we may find $K\in \bM_d(\fK\cap \A)$ with $\|K\|\leq 2$ and such that $\gamma_n^{(d)}(K)=R.$ Now, we calculate
\begin{align*}
\|A+\delta K\|&\geq \|\gamma_n^{(d)}(A+\delta K) \|=\| \gamma_n^{(d)}(A)+\delta R\|\\
&\geq \|\gamma_n^{(d)}(A)\xi+\delta R\xi\|=(1+\delta)\|\gamma_n^{(d)}(A)\xi\|\\
&=(1+\delta)\|\gamma_n^{(d)}(A)\|\geq (1+\delta)(1-\delta/2)\|A\|\\
&>\|A\|.
\end{align*}
In particular, we find
\[
\|\kappa^{(d)}(A+\delta K)\|=\|\kappa^{(d)}(A)\|=\|A\|<\|A+\delta K\|.
\]
Noting that $\|A-(A+\delta K)\|\leq 2\delta$, we may invoke Theorem \ref{T:smallkappa} to obtain the desired conclusions.
\end{proof}

The reader will glean from the proof that the assumption on $\gamma_n|_{\fK\cap \A}$ being a complete quotient map for every $n\geq N$ can be weakened. It suffices to require that the sequence of inverses of the induced maps on the quotients $(\A\cap \fK)/\ker (\gamma_n|_{\A\cap \fK})$ be uniformly completely bounded.

As an application of Theorem \ref{T:epssurj} we single out the following concrete consequence.

\begin{corollary}\label{C:AcontainsK}
Let $\A\subset \prod_{n=1}^\infty \bM_{r_n}$ be a unital operator algebra which contains $\oplus_{n=1}^\infty \bM_{r_n}$. Then, $\rC^*_e(\A)\cong \rC^*(\A)$ and in particular $\rC^*_e(\A)$ is RFD.
\end{corollary}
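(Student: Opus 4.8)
The plan is to read this off from Theorem \ref{T:epssurj} once the ideal $\fK$ has been pinned down, and then to upgrade the resulting \emph{quotient} description to an honest isomorphism by showing the Shilov ideal is trivial. First I would identify $\fK$ explicitly. Writing $\fL=\oplus_{n=1}^\infty\bM_{r_n}$, the containment $\fL\subseteq\A\subseteq\rC^*(\A)$ forces $\fK=\rC^*(\A)\cap\fL=\fL$, so that $\fK=\oplus_{n=1}^\infty\bM_{r_n}$ and in particular $\fK\cap\A=\fK$. For each $n$, the map $\gamma_n|_{\fK}$ is just the coordinate projection $\oplus_m\bM_{r_m}\to\bM_{r_n}$, a surjective $*$-homomorphism of $\rC^*$-algebras; since a surjective $*$-homomorphism induces a $*$-isomorphism (hence a complete isometry) on the quotient by its kernel, $\gamma_n|_{\fK}$ is automatically a complete quotient map onto $\bM_{r_n}$. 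Thus the hypothesis of Theorem \ref{T:epssurj} holds with $N=1$, which already yields that $\rC^*_e(\A)$ is RFD and is realized as $\{\oplus_\lambda(s|_{E_\lambda}):s\in\rC^*(\A)\}$ for a family of finite-dimensional reducing subspaces $E_\lambda$.

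It remains to improve this to $\rC^*_e(\A)\cong\rC^*(\A)$, equivalently to show that the Shilov ideal $\Sigma$ of $\A$ in $\rC^*(\A)$ is $\{0\}$. Since the quotient map $\rC^*(\A)\to\rC^*(\A)/\Sigma$ is completely isometric on $\A\supseteq\fK$, it is injective on $\fK$, so $\Sigma\cap\fK=\{0\}$, and consequently $\Sigma\fK\subseteq\Sigma\cap\fK=\{0\}$. The crucial observation is that $\fK$ is an \emph{essential} ideal of $\rC^*(\A)$: if $s\in\rC^*(\A)$ satisfies $s\fK=0$, then applying $s$ to the block identities $I_{r_n}\in\fK$ gives $\gamma_n(s)=0$ for every $n$, whence $s=0$. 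Applying this to each element of $\Sigma$ forces $\Sigma=\{0\}$, and therefore $\rC^*_e(\A)\cong\rC^*(\A)$.

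The only genuinely substantive step is this essentiality argument; the verification that $\gamma_n|_{\fK}$ is a complete quotient map is routine $\rC^*$-theory. I would also note that the conclusion is consistent with the obvious fact that $\rC^*(\A)$ is RFD, being a $\rC^*$-subalgebra of $\prod_n\bM_{r_n}$ on which the projections $\{\gamma_n\}$ restrict to a separating family of finite-dimensional $*$-representations; the real content of the corollary is precisely the identification of the $\rC^*$-envelope with $\rC^*(\A)$ itself, and for that the decisive point is that the large supply of compacts $\fK$ inside $\A$ leaves no room for a nontrivial Shilov ideal.
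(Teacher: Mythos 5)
Your proof is correct, but it reaches the conclusion by a genuinely different route than the paper. Both arguments begin the same way: since $\fL=\oplus_{n=1}^\infty\bM_{r_n}\subseteq\A$, one gets $\fK\cap\A=\fK=\fL$ and each $\gamma_n|_{\fK\cap\A}$ is a complete quotient map, so Theorem \ref{T:epssurj} applies. The paper then stays inside the output of that theorem: it takes the representation $\pi(s)=\oplus_{\lambda}(s|_{E_\lambda})$, uses the fact that each $E_\lambda$ reduces $\fL$ and that $\rC^*(\A)|_{E_\lambda}$ is irreducible to force $E_\lambda$ to be one of the coordinate blocks $\bC^{r_{n_\lambda}}$, and uses $\pi(p_m)\neq 0$ for every $m$ to see that every block occurs, so that $\pi$ agrees with the identity representation up to a unitary permutation of summands; hence $\pi$ is a $*$-isomorphism and $\rC^*_e(\A)\cong\rC^*(\A)$. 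You instead bypass this analysis entirely: the quotient by the Shilov ideal $\Sigma$ is completely isometric on $\A\supseteq\fK$, so $\Sigma\cap\fK=\{0\}$, hence $\Sigma\fK=\{0\}$; and $\fK$ is essential in $\rC^*(\A)$ because it contains the block projections $p_n$ and $sp_n=0$ for all $n$ forces $s=0$; therefore $\Sigma=\{0\}$. This is clean and correct, and it has a consequence worth noting: your appeal to Theorem \ref{T:epssurj} is actually redundant, since once $\Sigma=\{0\}$ you get $\rC^*_e(\A)\cong\rC^*(\A)$, which is RFD simply because it sits inside $\prod_{n=1}^\infty\bM_{r_n}$ with the $\gamma_n$ as a separating family of finite-dimensional representations. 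So your argument is shorter and more elementary, needing only the existence of the Shilov ideal from the preliminaries; what the paper's longer route buys is extra structural information, namely a concrete identification of the representation implementing the $\rC^*$-envelope (the coordinate representation, up to permutation) together with the unique extension property for it, rather than just the abstract isomorphism $\rC^*_e(\A)\cong\rC^*(\A)$.
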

\begin{proof}
The assumption that $\A$ contains $\oplus_{n=1}^\infty \bM_{r_n}$ is easily seen to imply that $\gamma_n|_{ \fK\cap \A}$ is a complete quotient map for every $n\in \bN$. By Theorem \ref{T:epssurj}, we see that there is a collection $\{E_\lambda\}_{\lambda\in \Lambda}$ of finite-dimensional reducing subspaces for $\rC^*(\A)$  such that the unital $*$-homomorphism $\pi:\rC^*(\A)\to \prod_{\lambda\in \Lambda} B(E_\lambda)$ defined as
\[
\pi(s)= \oplus_{\lambda\in \Lambda}(s|_{E_\lambda}), \quad s\in \rC^*(\A)
\]
is completely isometric on $\A$ and has the unique extension property with respect to $\A$. 
 Moreover, $\rC^*(\A)|_{E_{\lambda}}$ is irreducible for every $\lambda\in \Lambda.$  We may assume without loss of generality that the the subspaces $\{ E_\lambda :\lambda\in \Lambda\}$ are distinct. We claim that $\pi$ is a $*$-isomorphism.

To see thus, let $m\in \bN$ and let $p_m\in \oplus_{n=1}^\infty \bM_{r_n}$ be the orthogonal projection onto $\bC^{r_m}$. By assumption, we see that $p_m\in \A$ so that 
\[
\oplus_{\lambda\in \Lambda}(p_m|_{E_\lambda})=\pi(p_m)\neq 0.
\]
Let $\lambda\in \Lambda$. The subspace $E_\lambda$ is reducing for $\rC^*(\A)$, and in particular for $\oplus_{n=1}^\infty \bM_{r_n}$. Using that $\rC^*(\A)|_{E_{\lambda}}$ is irreducible, we see that $E_\lambda$  must coincide with one of the orthogonal summands $\bC^{r_{n_\lambda}}\subset \oplus_{n=1}^\infty \bC^{r_n}$.  The fact that
\[
\oplus_{\lambda\in \Lambda}(p_m|_{E_\lambda})\neq 0
\]
for every $m\in \bN$ shows that
\[
\oplus_{\lambda\in \Lambda}(s|_{E_\lambda}) \qand \oplus_{n=1}^\infty \gamma_n(s)
\]
 coincide for every $s\in \rC^*(\A)$, up to a fixed unitary permutation of the summands. Thus, $\pi$ is a $*$-isomorphism and we find
\[
\rC^*_e(\A)\cong \pi(\rC^*(\A))\cong \rC^*(\A).
\]
\end{proof}

We now exhibit an example of an operator algebra that satisfies the condition of Theorem \ref{T:smallkappa} but does not satisfy those of Corollary \ref{C:quotientRFD} or of Theorem \ref{T:epssurj}.

\begin{example}
For $n\in \bN$ and $1\leq i,j\leq n$, we let $E_{i,j}^{(n)}\in \bM_n$ denote the standard matrix unit.
Given $m\in \bN$, we let $T_m\in \prod_{n=1}^\infty \bM_n$ be the unique element satisfying
\[
\gamma_n(T_m)=\begin{cases}
E^{(n)}_{2m-1,2m}  & \text{ if } n=2m,\\
\frac{1}{2}E^{(n)}_{2m-1,2m}  & \text{ if } n>2m,\\
0 & \text{ otherwise}.
\end{cases}
\] 
Let $\A\subset\prod_{n=1}^\infty \bM_n$ be the unital operator algebra generated by $\{T_m:m\in \bN\}$. For every $n>2$, we note that
\[
\gamma_n(T_1^*T_1-T_1T_1^*)=\frac{1}{4}(E_{22}^{(n)}-E_{11}^{(n)}).
\]
By Lemma \ref{L:limsup} we find
\[
\|\kappa(T_1^*T_1-T_1T_1^*)\|=\limsup_{n\to\infty}\|\gamma_n(T_1^*T_1-T_1T_1^*)\|=\frac{1}{4}
\]
which shows that $\rC^*(\A)/\fK$ is not commutative. Next, assume that there is $r\in \bN$ along with $\alpha_1,\ldots,\alpha_r\in \bC$ such that
\[
\sum_{j=1}^r \alpha_j \kappa(T_j)=0.
\]
Note now that for $n>2r$ we have
\[
\gamma_n\left( \sum_{j=1}^r \alpha_j T_j\right)=\frac{1}{2}\sum_{j=1}^r \alpha_j E^{(n)}_{2j-1,2j}
\]
whence
\[
\left\|\gamma_n\left(\sum_{j=1}^r \alpha_j T_j \right)\right\|\geq \frac{1}{2}\max_{1\leq j\leq r}|\alpha_j|.
\]
By virtue of Lemma \ref{L:limsup} again, we see that
\[
 \frac{1}{2}\max_{1\leq j\leq r}|\alpha_j|\leq \limsup_{n\to\infty}\left\|\gamma_n\left(\sum_{j=1}^r \alpha_j T_j \right)\right\|=\left\| \sum_{j=1}^r \alpha_j \kappa(T_j)\right\|=0
\]
so that $\alpha_1=\alpha_2=\ldots=\alpha_r=0$. We conclude that the set $\{\kappa(T_j):j\in \bN\}$ is linearly independent in $\rC^*(\A)/\fK$, whence $\rC^*(\A)/\fK$ is infinite-dimensional. This shows that $\A$ does not satisfy either condition in Corollary \ref{C:quotientRFD}.

Fix $d\in \bN$. It is readily seen that $T_iT_j=0$ for every $i,j\in \bN$, so that a generic element $A\in \bM_d(\A)$ can be written as
\[
A=C_0 \otimes I+C_1\otimes T_1+\ldots+ C_r \otimes T_r
\]
for some $r\in \bN$ and some $C_0,\ldots,C_r\in \bM_d$. Here, given $C=[c_{ij}]_{i,j}\in \bM_d$ and $a\in \A$, we use the notation
\[
C\otimes a=[c_{ij}a]_{i,j}\in \bM_d(\A).
\]
For convenience, for each $1\leq k\leq r$ we let
\[
\Gamma_k=\begin{bmatrix}
C_0 & C_k \\
0 & C_0
\end{bmatrix}
\qand
\Gamma'_k=\begin{bmatrix}
C_0 & \frac{1}{2}C_k \\
0 &C_0
\end{bmatrix}.
\]
Upon applying the canonical shuffle in $\bM_d(\bM_n)$,  we find that if $n=2p$ for some $1\leq p\leq r$ then
\[
\gamma^{(d)}_{n}(A)=
\Gamma_1'\oplus \Gamma'_2\oplus \ldots \oplus \Gamma'_{p-1}\oplus  \Gamma_p.
\]
Likewise, if $n>2r$ we find 
\[
\gamma^{(d)}_{n}(A)=
\Gamma_1'\oplus \Gamma'_2\oplus \ldots \oplus \Gamma'_r\oplus C_0 I_{n-2r}.
\]
In particular, we see that $\kappa^{(d)}(A)\neq 0$ unless $A=0$, which implies that $\A\cap \fK=\{0\}$ and thus $\A$ does not satisfy the condition of Theorem \ref{T:epssurj}.

Finally, we show that $\A$ satisfies the condition of Theorem \ref{T:smallkappa}. We let $\D_d\subset \bM_n(\A)$ be the subset consisting of elements of the form
\[
A=C_0 \otimes I+C_1 \otimes T_1+\ldots+ C_r\otimes T_r 
\]
for some $r\in \bN$ and some \emph{invertible} matrices $C_0,\ldots,C_r\in \bM_d$. It is clear that $\D_d$ is dense in $\bM_d(\A)$. We now claim that $\|\kappa^{(d)}(A)\|<\|A\|$ for every $A\in \D_d$. To see this, fix an element $A\in \D_d$ which we write as
\[
A=C_0 \otimes I+C_1 \otimes T_1+\ldots+ C_r\otimes T_r
\]
for some $r\in \bN$ and some invertible matrices $C_0,\ldots,C_r\in \bM_d$. By Lemma \ref{L:limsup}, we must show that
\[
\limsup_{n\to\infty}\|\gamma^{(d)}_{n}(A)\|<\|A\|.
\]
Using the same notation as before, it is clear that we have that $\|\Gamma'_k\|\geq \|C_0\|$ for every $1\leq k\leq r$, so we have
\[
\limsup_{n\to\infty}\|\gamma^{(d)}_{n}(A)\|=\max_{1\leq k\leq r}\|\Gamma'_k\|.
\]
On the other hand, for each $1\leq k\leq r$, since $C_k$ is invertible there is $\delta_k>0$ such that $C_kC_k^*\geq \delta_k I$, whence
\begin{align*}
\left\|\begin{bmatrix}
C_0 & C_k\\
0 & C_0
\end{bmatrix} \right\|^2&\geq \left\|\begin{bmatrix}
C_0 & C_k
\end{bmatrix} \right\|^2\\
&=\|C_0C_0^*+C_kC_k^*\|\\
&\geq \|C_0\|^2+\delta_k>\|C_0\|^2
\end{align*}
and
\begin{align*}
\|\Gamma_k'\|=\left\|\begin{bmatrix}
C_0 & \frac{1}{2}C_k\\
0 & C_0
\end{bmatrix} \right\|&\leq \frac{1}{2}\left\|\begin{bmatrix}
C_0 & C_k\\
0 & C_0
\end{bmatrix} \right\|+\frac{1}{2}\left\|\begin{bmatrix}
C_0 &0\\
0 & C_0
\end{bmatrix} \right\|\\
&=\frac{1}{2}\left\|\begin{bmatrix}
C_0 & C_k\\
0 & C_0
\end{bmatrix} \right\|+\frac{1}{2}\|C_0\|\\
&<\left\|\begin{bmatrix}
C_0 & C_k\\
0 & C_0
\end{bmatrix} \right\|=\|\Gamma_k\|
\end{align*}
for every $1\leq k\leq r$. We conclude that
\begin{align*}
\limsup_{n\to\infty}\|\gamma^{(d)}_{n}(A)\|&=\max_{1\leq k\leq r}\|\Gamma'_k\|<\max_{1\leq k\leq r}\|\Gamma_k\|\\
&\leq \max_{1\leq p\leq r}\|\gamma_{2p}^{(d)}(A)\|\leq \|A\|.
\end{align*}
Hence, $\rC^*_e(\A)$ is RFD.
\qed
\end{example}

Finally, we provide an example that shows that residual finite-dimensionality of both the minimal and maximal $\rC^*$-cover of a unital operator algebra does not typically imply that the same property holds for arbitrary $\rC^*$-covers.

\begin{example}\label{E:RFDcovers}
Let $H^2(\bD)$ denote the Hardy space on the open unit disc $\bD\subset \bC$ and let $S\in B(H^2(\bD))$ denote the usual isometric unilateral shift (see \cite{agler2002} for details). Let $\S=\spn\{I,S,\}\subset B(H^2(\bD))$. It is well known that $\rC^*(\S)$ contains the ideal $\fK$ of compact operators on $H^2(\bD)$, so in particular $\rC^*(\S)$ is not RFD. Moreover, the quotient $\rC^*(\S)/\fK$ is $*$-isomorphic to $\rC(\bT)$. In particular, this implies that $\rC^*_e(\S)\cong \rC(X)$ where $X\subset \bT$ is the Shilov boundary of $\spn\{1,z\}$. As seen in Example \ref{E:fdoa}, $X=\bT$ and thus $\rC^*_e(\S)\cong \rC(\bT)$.
Consider now the unital operator algebra $\A_\S$ from Lemma \ref{L:M2C*env}, which is finite-dimensional because $\S$ is. Thus, $\rC^*_{\max}(\A_\S)$ is RFD by Theorem \ref{T:C*maxfdimA}. Furthermore, by Lemma \ref{L:M2C*env} we know that
\[
\rC_e^*(\A_\S)\cong \bM_2(\rC^*_e(\S))\cong \bM_2(\rC(\bT))
\]
which is also RFD. 
\qed
\end{example}

\bibliography{/Users/raphaelclouatre/Dropbox/Research/Shared/Chris-Raphael/RFDenvelope/biblio_main_RFD}
\bibliographystyle{plain}


\end{document}